\date{\today}
\theoremstyle{definition}
\newtheorem{theorem}{Theorem}[section]
\newtheorem{theorem*}{Theorem}
\newtheorem{remark}[theorem]{Remark}
\newtheorem{lemma}[theorem]{Lemma}
\newtheorem{corollary}[theorem]{Corollary}
\newtheorem{example}[theorem]{Example}
\newtheorem{proposition}[theorem]{Proposition}
\newcommand{\tr}{\operatorname{tr}}
\newcommand{\nul}{\operatorname{nul}}
\theoremstyle{definition}
\theoremstyle{definition}
\newtheorem{definition}[theorem]{Definition}
\newtheorem*{definition*}{Definition}
\begin{document}

\newcommand{\Pos}{Pos}
\renewcommand{\Im}{\operatorname{Im}}

\newcommand{\Hess}{\operatorname{Hess}}
\newcommand{\Id}{\operatorname{Id}}
\newcommand{\Sym}{\operatorname{Sym}}
\newcommand{\vol}{\operatorname{vol}}
\newcommand{\Hom}{\operatorname{Hom}}

\newcommand{\usc}{\operatorname{usc}}

\newcommand{\argmin}{\operatorname{argmin}}
\newcommand{\diag}{\operatorname{diag}}

\renewcommand{\Pos}{\operatorname{Pos}}
\newcommand{\calPos}{\mathcal P}
\newcommand{\Int}{\operatorname{Int}}
\setcounter{tocdepth}{1}

\begin{abstract}
We extend Prekopa's Theorem and the Brunn-Minkowski Theorem from  convexity to $F$-subharmonicity.  We apply this to the interpolation problem of convex functions and convex sets introducing a new notion of ``harmonic interpolation" that we view as a generalization of Minkowski-addition.
\end{abstract}

\title[{Interpolation, Prekopa and Brunn-Minkowski for $F$-subharmonicity} ]{Interpolation, Prekopa and Brunn-Minkowski\\ for $F$-subharmonicity} 
\author{Julius  Ross and David Witt Nystr\"om}

\maketitle
\tableofcontents
\numberwithin{equation}{section}

\section{Introduction}

\subsection{Interpolation of Convex Sets}

Let $A_0$ and $A_1$ be two bounded convex subsets of $\mathbb R^m$.  There is a natural interpolation between these  obtained by setting
\begin{equation}A_t : = tA_1 + (1-t)A_0 \text{ for } t\in [0,1]\label{eq:basicinterpolation}\end{equation}
where  $A+B: = \{ a+b : a\in A,b\in B\}$ is  Minkowski-addition, and the scaling is defined by $tA: = \{ta: a\in A\}$.  The celebrated Brunn-Minkowski Theorem implies that with this interpolation the map
$$t\mapsto -\log \vol(A_t)$$
is convex.  

It is natural to ask if anything similar can be said for the interpolation of infinite families of convex subsets of $\mathbb R^m$.  So suppose $\Omega\subset \mathbb R^n$ is open and bounded with smooth boundary, and for each $\tau\in \partial \Omega$ we are given a compact convex subset $A_\tau\subset \mathbb R^m$ that varies continuously with $\tau$.     The question is how can we interpolate this family to give a compact convex set  $A_x\subset \mathbb R^m$ for each $x\in \Omega$ in a nice way?  

A naive answer would be to simply take the fibers of  \begin{equation}\text{convex hull}\left(\bigcup_{\tau\in \partial \Omega} \{\tau\}\times A_{\tau}\right)\subset \overline{\Omega}\times \mathbb R^m\label{eq:convexhull}\end{equation}  but, unless one assumes that $\Omega$ is strictly convex, it will in general not be the case that this agrees with the given sets $A_\tau$ over $\partial \Omega$.  Furthermore even if $\Omega$ is strictly convex the interpolation \eqref{eq:convexhull} does not respect Minkowski addition of the boundary data.

Our proposed solution is different and can be described using the harmonic measure $d\mu_x$ on $\partial \Omega$  taken with respect to $x\in \Omega$.

\begin{definition}
Define
$$A_x: = \int_{\partial \Omega} A_\tau d\mu_x(\tau) \text{ for } x\in \Omega,$$
and  call the family $\{A_x\}_{x\in \Omega}$ the \emph{harmonic interpolation}.
\end{definition}

This harmonic interpolation involves a set integral which may be unfamiliar to the reader but poses no difficulties in the setup we will be considering.  For instance, one may think of this as a Riemann integral which is a limit of Riemann sums taken under the operation of Minkowski-addition.    We will prove the following: 

\begin{theorem}[Brunn-Minkowski Inequality for Harmonic Interpolation]\label{thm:introBM}
With the harmonic interpolation,  the map
$$x \mapsto -\log \vol(A_x)$$
is subharmonic on $\Omega$.
\end{theorem}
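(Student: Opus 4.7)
The plan is to combine the natural harmonicity of the set-valued map $x\mapsto A_x$ with the classical Brunn-Minkowski inequality integrated against a probability measure. First, I would interpret the set integral via support functions: writing $h_A(v):=\sup_{a\in A}\langle v,a\rangle$, the identities $h_{A+B}=h_A+h_B$ and $h_{tA}=th_A$ for $t\ge 0$ show that the definition of $A_x$ is equivalent to
$$h_{A_x}(v)=\int_{\partial\Omega}h_{A_\tau}(v)\,d\mu_x(\tau)\quad\text{for all } v\in\mathbb R^m.$$
Since the boundary data varies continuously, this integral is well-defined and, as an integral of continuous boundary data against harmonic measure, it is a harmonic function of $x$ for each fixed $v$. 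In particular $x\mapsto A_x$ is continuous in the Hausdorff metric on compact convex bodies.

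Second, I would establish a tower property: for any subdomain $\Omega'\subset\subset\Omega$ with $x\in\Omega'$,
$$A_x=\int_{\partial\Omega'}A_y\,d\mu_x^{\Omega'}(y).$$
This follows from the mean value property of the harmonic function $y\mapsto h_{A_y}(v)$ for each $v$, since support functions determine their convex bodies.

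Third, I would invoke the Brunn-Minkowski inequality in its integrated form
$$\vol\left(\int B_\tau\,d\nu(\tau)\right)^{1/m}\ge \int\vol(B_\tau)^{1/m}\,d\nu(\tau)$$
for probability measures $\nu$ on compact convex bodies, obtained from the classical two-body inequality by iteration on convex combinations and passage to Hausdorff limits of Riemann sums. Applied with $\nu=\mu_x^{\Omega'}$ and $B_y=A_y$, taking logarithms, and using Jensen's inequality for the concave function $\log$, one obtains
$$-\log\vol(A_x)\le \int_{\partial\Omega'}-\log\vol(A_y)\,d\mu_x^{\Omega'}(y).$$
Specializing to $\Omega'=B(x,r)$ yields the submean value inequality, and the upper semicontinuity of $x\mapsto -\log\vol(A_x)$ follows from the Hausdorff continuity of $x\mapsto A_x$ and continuity of volume. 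These together are the definition of subharmonicity.

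The principal obstacle I anticipate is the rigorous justification of the integrated Brunn-Minkowski inequality: one must verify that Minkowski Riemann sums converge in the Hausdorff metric to the set integral and that volume passes to this limit. A subsidiary concern is the degenerate case where $\vol(A_x)=0$, so that $-\log\vol(A_x)=+\infty$; here the submean inequality is preserved trivially and $+\infty$ is an admissible extended value for a subharmonic function.
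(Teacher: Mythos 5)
Your proof is correct in essence but takes a genuinely different route from the paper. You prove the statement directly: you observe that $x\mapsto h_{A_x}(v)$ is harmonic for each $v$, deduce a tower/mean-value property for the set-valued map, and then combine the multi-body form of the classical Brunn--Minkowski inequality (concavity of $\vol^{1/m}$ under Minkowski averages) with Jensen's inequality for $\log$ to obtain the submean value inequality on balls; Hausdorff continuity gives upper semicontinuity. This is elementary and self-contained, requiring only the classical Brunn--Minkowski theorem and basic potential theory. The paper instead embeds the statement in a much larger framework: it first proves a Prekopa theorem for $F\star\mathcal P$-subharmonic functions (via the Ball--Barthe--Naor Hessian formula and a chain of viscosity approximations), derives a Brunn--Minkowski theorem for general convex Dirichlet sets $F$, then realizes the harmonic interpolation as a Perron envelope with $F=F_{sub}$ and shows via Legendre duality that $\Phi(x,y)=\chi_{A_x}(y)$, at which point the general theorem applies. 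The paper's machinery buys a vastly more general statement (valid for arbitrary convex Dirichlet sets, not just $F_{sub}$) and a functional Prekopa-type inequality; your approach buys a shorter, more transparent proof of the specific subharmonicity claim. Both share the key observation that $h_{A_x}$ is harmonic in $x$.

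One small inaccuracy to fix: you write that $+\infty$ is an admissible value for a subharmonic function, but subharmonic functions are $[-\infty,\infty)$-valued by definition, so $-\log\vol(A_x)=+\infty$ is genuinely problematic. The correct resolution is to note that the theorem implicitly requires $\vol(A_x)>0$ (e.g., by assuming some $A_\tau$ has nonempty interior, so that every Minkowski average $A_x$ does too); this is the same nondegeneracy hypothesis $(\Int K)_x=\Int(K_x)\neq\emptyset$ that appears in the paper's general Theorem~\ref{thm:BMI:intro}. You should also state the passage from Riemann sums to the set integral and continuity of $\vol$ under Hausdorff limits of uniformly bounded convex bodies, which you flag as the main technical obstacle, but both are standard.
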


Observe that in the special case $\Omega = [0,1]$ the harmonic measure with respect to $t\in \Omega$ is $$d\mu_t = (1-t)\delta_0 + t\delta_1$$ where $\delta_p$ is the Dirac measure at $p$, so $A_t = \int_{\partial \Omega} A_x d\mu_t(x) = (1-t)A_0 + tA_1$ as in \eqref{eq:basicinterpolation}.   Thus our theorem generalizes the classical Brunn-Minkowski Theorem since being subharmonic in one real variable is the same as being convex.

This harmonic interpolation has other amenable features namely it depends linearly on the boundary data (with respect to Minkowski-addition) and is compatible with affine transformations of $\mathbb R^m$.   These two features do not characterize the harmonic interpolation uniquely for the same would hold if one replaces the harmonic measure on $\partial \Omega$ with any other measure, but one can characterize it through a mean value property analogous to that of harmonic functions.  We expect that the harmonic interpolation also has nice ``regularity properties" analogous to that of elliptic regularity, but we will not consider that further in this paper.


\subsection{Interpolation of Convex Functions}

The functional analog of the Brunn-Minkowski Theorem is given by  Prekopa's Theorem, a version of which can be stated as follows: if $\psi(t,y)$ is a convex function on $[0,1]\times \mathbb R^m$ then the function 
$$t\mapsto -\log \int_{\mathbb R^m} e^{-\psi(t,y)} dy$$
is convex. And just as the Brunn-Minkowski Theorem has implications for the interpolation of convex sets,  Prekopa's Theorem has implications for the interpolation of convex functions.

To discuss this, suppose that $\phi_0$ and $\phi_1$ are two convex functions on $\mathbb R^m$.  The naive interpolation between these is the combination $t\phi_1 + (1-t)\phi_0$ for $t\in [0,1]$, but this has the disadvantage that $(t,y)\mapsto t\phi_1(y) + (1-t)\phi_0(y)$ typically fails to be convex on $[0,1]\times \mathbb R^m$.  So instead consider the interpolation
$$\phi_t : = ((1-t)\phi_0^* + t \phi_1^*)^*$$
where the star denotes the Legendre transform
$$\phi^*(u) = \sup_{y\in \mathbb R^m} \{ y\cdot u - \phi(y) \}.$$
As the Legendre transform is an involution on convex functions, $\phi_t^* = (1-t)\phi_0^* + \phi_1^*$ and it is easy to check that $$\psi(t,y): = \phi_t(y)$$ is convex so Prekopa's Theorem applies.

Once again we can consider interpolating infinitely many convex functions $\phi_\tau$ on $\mathbb R^m$, parametrized by points $\tau$ in the boundary of a bounded domain $\Omega\subset \mathbb R^n$ with smooth boundary.  We assume always that $(\tau,y)\mapsto \phi_{\tau}(y)$ is continuous.   Define
$$ \phi_x = \left(\int_{\partial \Omega} \phi_\tau^* d\mu_x(\tau) \right)^* \text{ for } x\in \Omega$$
where again $d\mu_x$ is the harmonic measure on $\partial \Omega$ with respect to the point $x$.   

\begin{definition}
Set
$$\psi(x,y) = \phi_x(y) \text{ for } x\in \Omega, y\in \mathbb R^m$$
which we refer to as the \emph{harmonic interpolation} of the data $\{\phi_{\tau}\}_{\tau \in \partial \Omega}$.   
\end{definition}

Now $\psi$ will in general not be a convex function so we cannot apply Prekopa's Theorem as stated.  However $\psi$ has another form of positivity, namely:
\begin{enumerate}
\item For each $x\in \Omega$ the function $y\mapsto \psi(x,y)$ is convex and
\item For each affine function $\Gamma:\mathbb R^n\to \mathbb R^m$ the function $x\mapsto \psi(x,\Gamma(x))$ is subharmonic on $\Omega$.
\end{enumerate}


\begin{theorem}[Prekopa's Theorem: Version I]\label{thm:introPrekopa}
Suppose  $\psi:\Omega\times \mathbb R^m\to \mathbb R\cup \{-\infty\}$ is upper-semicontinuous and satisfies (1) and (2).  Then the function $$x\mapsto -\log \int_{\mathbb R^n} e^{-\psi(x,y)} dy$$ is subharmonic on $\Omega$. 
In particular this applies to the harmonic interpolation of a family of convex functions.
\end{theorem}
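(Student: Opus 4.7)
The plan is to reduce to a smooth, strictly convex setting by regularisation, then prove the inequality $\Delta_x u \geq 0$ pointwise via a combination of the Brascamp--Lieb inequality and condition (2) applied through an optimisation over affine maps.

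For the regularisation step, I would convolve $\psi$ separately in $x$ and in $y$ against a smooth symmetric mollifier and add $\epsilon|y|^2$ to enforce strict convexity in $y$ and fast enough decay of $e^{-\psi_\epsilon}$. Condition (2) applied to the constant affine map $\Gamma\equiv y_0$ says that $x\mapsto\psi(x,y_0)$ is subharmonic, so convolution in $x$ increases $\psi$ pointwise; condition (1) similarly ensures that convolution in $y$ increases $\psi$. A change of variables in $\psi(x-z,\Gamma(x)-z')$ shows that if $x\mapsto\psi(x,\Gamma(x))$ is subharmonic for every affine $\Gamma$ then the same is true after convolution, so both (1) and (2) are preserved. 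The associated $u_\epsilon=-\log\int e^{-\psi_\epsilon(x,y)}\,dy$ decreases to $u$, and since decreasing limits of usc subharmonic functions are subharmonic, it suffices to prove the theorem when $\psi$ is smooth, strictly convex in $y$, and has sufficient decay.

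In that smooth setting, let $\mu_x$ denote the probability measure on $\mathbb{R}^m$ proportional to $e^{-\psi(x,\cdot)}$ and set $\langle f\rangle_x:=\int f\,d\mu_x$. Differentiating under the integral sign gives the key identity
$$\Delta_x u(x)\;=\;\langle \Delta_x\psi\rangle_x\;-\;\sum_{i=1}^{n}\mathrm{Var}_{\mu_x}(\partial_{x_i}\psi).$$
Since $\mu_x$ is log-concave by (1), the Brascamp--Lieb inequality applied to each $f_i(y):=\partial_{x_i}\psi(x,y)$ yields
$$\sum_{i=1}^{n}\mathrm{Var}_{\mu_x}(\partial_{x_i}\psi)\;\leq\;\big\langle\tr\big(\psi_{xy}(\psi_{yy})^{-1}\psi_{yx}\big)\big\rangle_x,$$
in standard Hessian-block notation. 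Now fix $(x_0,y_0)\in\Omega\times\mathbb{R}^m$ and $A\in\Hom(\mathbb{R}^n,\mathbb{R}^m)$, and apply (2) to $\Gamma(\xi):=y_0+A(\xi-x_0)$. Computing the Laplacian of $\xi\mapsto\psi(\xi,\Gamma(\xi))$ at $\xi=x_0$ gives
$$\Delta_x\psi(x_0,y_0)+2\tr\!\big(A\psi_{xy}(x_0,y_0)\big)+\tr\!\big(\psi_{yy}(x_0,y_0)AA^T\big)\;\geq\;0.$$
Minimising this quadratic in $A$ at $A=-(\psi_{yy})^{-1}\psi_{yx}$ yields the pointwise inequality $\Delta_x\psi\geq\tr(\psi_{xy}(\psi_{yy})^{-1}\psi_{yx})$. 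Integrating against $\mu_x$ and combining with Brascamp--Lieb gives $\Delta_x u\geq 0$, as desired.

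The main obstacle is the regularisation: one must simultaneously secure smoothness, strict convexity in $y$, enough decay of $e^{-\psi_\epsilon}$ to make $u_\epsilon$ finite and to justify differentiation under the integral, and preservation of both (1) and (2) by the approximation. A secondary technical issue is the degenerate case in which $\psi(x,\cdot)$ is merely convex and $\psi_{yy}$ fails to be invertible; the $\epsilon|y|^2$ term bypasses this in the approximation and the final limit absorbs the error.
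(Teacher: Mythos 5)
Your smooth-case argument is correct and is a genuinely different route from the paper's. You reduce condition (2) to the pointwise inequality $\Delta_x\psi\ge\tr\bigl(\psi_{xy}(\psi_{yy})^{-1}\psi_{yx}\bigr)$ by optimising over affine maps, and then kill the variance term in the identity $\Delta_x u=\langle\Delta_x\psi\rangle-\sum_i\mathrm{Var}_{\mu_x}(\partial_{x_i}\psi)$ with the Brascamp--Lieb inequality for the log-concave slice measures. The paper instead proves a matrix-level identity (the Ball--Barthe--Naor transportation formula, Proposition \ref{prop:Hessiancalc}), writing $\Hess(\phi)$ as an average of $i_\Gamma^*\Hess(\psi)$ plus a positive semidefinite term; this is what lets the paper prove the theorem for an arbitrary convex Dirichlet set $F$ (one must average matrices inside $F$, not just a scalar trace), whereas your Brascamp--Lieb reduction is special to $F_{sub}$ --- exactly the overlap with \cite{BrascampLieb} that the paper points out. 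For the statement at hand ($F=F_{sub}$) your core inequality is a legitimate and arguably more classical proof.

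The genuine gap is in the approximation step, and it is not merely cosmetic. Mollifying and adding $\epsilon|y|^2$ does not by itself justify the two differentiations under the integral sign, nor the finiteness needed to run Brascamp--Lieb: $\psi$ is only upper-semicontinuous, may take the value $-\infty$, and need not be bounded below, so the mollified derivatives $\partial_x\psi_\epsilon$ and $\Hess_x\psi_\epsilon$ are controlled only by local sup/$L^1$ norms of $\psi$, and there is no convexity in $x$ to compare these with the pointwise decay of $e^{-\psi_\epsilon(x,\cdot)}$ uniformly for $x$ in a neighbourhood; the quadratic penalty fixes integrability of $e^{-\psi_\epsilon}$ (once $\psi$ is bounded below) but not domination of the differentiated integrands. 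This is precisely why the paper's proof of Theorem \ref{thm:prekopageneral} inserts reductions before smoothing: Step 1 replaces $\psi$ by $\max\{\psi,-n\}$ to dispose of $-\infty$ and unboundedness from below; Step 3 splices in a barrier $\max\{\psi,n\rho(y)+u(x)\}$ so that $\psi=A\|y\|^2+v(x)$ outside a compact $y$-region; Step 4 uses sup-convolution to get continuity before mollifying; and the smooth theorem (Theorem \ref{thm:prekopasmooth}) carries explicit polynomial-growth hypotheses that these reductions guarantee. All of these modifications preserve your conditions (1) and (2) (a maximum of two functions satisfying them again satisfies them), so your argument can be completed by adopting them; but as written, "the final limit absorbs the error" does not close this hole.
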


We are aware of two different proofs of this statement.  First there is direct proof that uses the classical Prekopa Theorem (which will appear in a forthcoming companion paper \cite{ross_nystron_unpublished}).  Second there is a more flexible proof (given in this paper) that considers this as a special case of a statement that replaces subharmonicity with a more general positive notion described below.\\

There is a close connection between this interpolation of convex functions and the previously discussed interpolation of convex sets.  Given a compact convex $A\subset \mathbb R^m$ the \emph{indicator function} of $A$ is

$$\chi_A : = \left\{ \begin{array}{cc} 0 & \text{on }A \\ \infty & \text{ on } A^c\end{array}\right.$$ and the \emph{support function} of $A$ is
$$h_A(u) : = \sup_{x\in A} \{x\cdot u\},$$
so by definition $\chi_A$ and $h_A$ are the Legendre transform of one another.  Using linearity of the map $A\mapsto h_A$ we will see that the harmonic interpolation of the functions $\{\chi_{A_\tau}\}_{\tau\in \partial \Omega}$ is precisely the indicator function of the harmonic interpolation of the sets $A_x$.   Moreover since 
$$\int_{\mathbb R^m} e^{-\chi_A} dy = \vol(A)$$
our generalization of Brunn-Minkowski's Theorem (Theorem \ref{thm:introBM}) follows from our generalization of Prekopa's Theorem (Theorem \ref{thm:introPrekopa}).

\subsection{Berndtsson's Complex Prekopa Inequality}\label{sec:introBo}

Our version of Prekopa's Theorem also quickly yields the following statement for plurisubharmonic functions, originally due to Berndtsson \cite{Berndtsson_Prekopa}.

\begin{theorem}
Let $X\subseteq \mathbb{C}^n$ be open and assume that $\psi$ is plurisubharmonic on $X\times \mathbb{C}^m$. Let $w_j=x_j+iy_j$ be the coordinates on $\mathbb{C}^m$ and assume that  $\psi(z,x+iy)=\psi(z,x)$.  Then $$z\mapsto -\log\int_{\mathbb{R}^m}e^{-\psi(z,x)}dx$$ is plurisubharmonic in $X$.  
\end{theorem}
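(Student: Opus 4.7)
The plan is to deduce this from Theorem \ref{thm:introPrekopa} by restricting to complex affine lines in $X$. Since plurisubharmonicity on $X$ is equivalent to subharmonicity of the restriction to every complex line, we fix $z_0 \in X$ and $v \in \mathbb{C}^n$, set $\Omega := \{\zeta \in \mathbb{C} : z_0 + \zeta v \in X\}$ (viewed as an open subset of $\mathbb{R}^2$), and aim to apply Theorem \ref{thm:introPrekopa} to the upper-semicontinuous function $\tilde\psi(\zeta, x) := \psi(z_0 + \zeta v, x)$ on $\Omega \times \mathbb{R}^m$.

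To verify condition (1), for each fixed $\zeta$ the function $w \mapsto \psi(z_0 + \zeta v, w)$ is plurisubharmonic on $\mathbb{C}^m$ and independent of the imaginary part $y = \Im w$. Such a function is automatically convex in $x$: for any $x_0, a \in \mathbb{R}^m$, the restriction of $\psi(z_0 + \zeta v, \cdot)$ to the complex line $t \mapsto x_0 + t a$ with $t = s + ir$ is subharmonic in $t$, but by $y$-invariance depends only on $s$, hence is convex in $s$.

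The crux is condition (2). Given a real-affine map $\Gamma : \mathbb{R}^2 \to \mathbb{R}^m$, we write $\Gamma(s, r) = c + s a + r b$ where $\zeta = s + ir$ and $a, b, c \in \mathbb{R}^m$, and observe that $\Gamma$ is the real part of the complex-affine map
\[ \hat\Gamma(\zeta) := c + \zeta(a - ib) \in \mathbb{C}^m. \]
Since $\psi$ is independent of the imaginary part in its second argument,
\[ \tilde\psi(\zeta, \Gamma(\zeta)) = \psi(z_0 + \zeta v, \Gamma(\zeta)) = \psi(z_0 + \zeta v, \hat\Gamma(\zeta)), \]
and the last expression is the pullback of the plurisubharmonic function $\psi$ by the holomorphic map $\zeta \mapsto (z_0 + \zeta v, \hat\Gamma(\zeta))$ of $\Omega$ into $X \times \mathbb{C}^m$, hence subharmonic on $\Omega$.

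Theorem \ref{thm:introPrekopa} then gives that $\zeta \mapsto -\log \int_{\mathbb{R}^m} e^{-\psi(z_0 + \zeta v, x)} dx$ is subharmonic on $\Omega$, and since $(z_0, v)$ was arbitrary this proves plurisubharmonicity on $X$. The only essentially new ingredient, and the step I expect to be the main point, is the complexification of $\Gamma$ used to verify (2): $y$-invariance of $\psi$ is precisely what allows an arbitrary real-affine map into $\mathbb{R}^m$ to be replaced by a complex-affine one, so that plurisubharmonicity of $\psi$ on $\mathbb{C}^n \times \mathbb{C}^m$ can be invoked to deliver subharmonicity in $\zeta$.
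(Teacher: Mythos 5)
Your proposal is correct and follows essentially the same route as the paper: reduce to complex lines (so $X\subseteq\mathbb{C}$), use the classical fact that a $y$-invariant plurisubharmonic function is convex in $x$, and verify condition (2) by extending the real-affine map $\Gamma$ to a complex-affine map so that $y$-invariance lets you replace $\Gamma$ by its complexification and invoke plurisubharmonicity along the resulting holomorphic map. This is exactly the paper's argument, with your explicit parametrization of the complex lines being the only cosmetic difference.
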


There is also a closely related version of this where instead of being independent of the imaginary part of $w$ we instead take $\psi$ to be independent of the argument of $w$.     The link with what we have said so far comes from the fact that the hypotheses imply that $\psi$ satisfies (1) and (2) above so Theorem \ref{thm:introPrekopa} applies. In fact Berndtsson's Theorem is really equivalent to Theorem \ref{thm:introPrekopa} when $\Omega\subset \mathbb C\simeq \mathbb R^2$.

\subsection{The Prekopa and Brunn-Minkowski Theorems for $F$-subharmonicity}

We have stated our results so far in terms of convexity and subharmonicity, but they can be cast in the much wider context of ``generalized subharmonicity".   To discuss this we recall some notions introduced and studied by Harvey-Lawson.

Let $F$ be a closed non-empty proper subset of the set $\Sym^2(\mathbb R^n)$ of symmetric $n\times n$ matrices and $\mathcal P\subset \Sym^2(\mathbb R^n)$ denote the positive semidefinite matrices.  We say that $F$ is a \emph{Dirichlet set} if
$$ A\in F \text{ and } P \in \mathcal P \Rightarrow A+P\in F.$$
A smooth function on an open  $X\subset \mathbb R^n$ is said to be \emph{$F$-subharmonic} if at each point its Hessian lies inside $F$.  

One can extend $F$-subharmonicity to upper-semicontinuous functions $f:X\to \mathbb R\cup \{-\infty\}$ using the so-called viscosity technique.   Call a smooth function $g$ in $\Omega$ a  \emph{test function for $f$ at $x\in \Omega$} if $g$ is smooth and $g\ge f$ near $x$ and $g(x)=f(x)$.  Then we say $f$ is \emph{$F$-subharmonic} if for any test function for $g$ at $x$ the Hessian of $g$ at $x$ lies in $F$.

We refer the reader to \S\ref{sec:backgroundFsubharmonicity} for more background to $F$-subharmonicity including various examples.  For now,  the reader may note that if $F= \mathcal P$ then $F$-subharmonicity becomes the usual notion of convexity.  On the other hand if $F$ is taken to be the set $F_{sub}$ symmetric matrices with positive trace then $F$-subharmonicity reduces to the usual notion of subharmonicity.\\

Now given such a Dirichlet set $F\subset \Sym^2(\mathbb R^n)$ we will define a \emph{product Dirichlet set} $F\star \mathcal P\subset  \Sym^2(\mathbb R^{n+m})$ with the following property:\\

An upper-semicontinuous function $\psi:X \times \mathbb R^m\to \mathbb R\cup \{-\infty\}$ is $F\star \mathcal P$-subharmonic if and only if 
\begin{enumerate}
\item For each $x\in X$ the function $y\mapsto \psi(x,y)$ is convex and
\item For each affine linear $\Gamma:\mathbb R^n\to \mathbb R^m$ the function $x\mapsto \psi(x,\Gamma(x))$ is $F$-subharmonic on $X$.
\end{enumerate}

The parallel with the discussion in the previous section should be clear (in fact what is written there is precisely this construction when $F$ is taken to be $F_{sub}$).   With these definitions we can state our general form of Prekopa's Theorem.

\begin{theorem}[Prekopa's Theorem for F-subharmonicity]
Assume that $F\subset \Sym^2(\mathbb R^n)$ is a convex Dirichlet set and that $\psi:X\times \mathbb R^m \to \mathbb R\cup \{-\infty\}$ is  $F\star \mathcal P$-subharmonic.  Then the function
$$x\mapsto -\log \int_{\mathbb R^m} e^{-\psi(x,y)} dy.$$
is $F$-subharmonic on $X$.
\end{theorem}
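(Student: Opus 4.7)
The plan is to first establish the conclusion when $\psi$ is smooth and strongly convex in $y$, using a direct Hessian computation combined with the Brascamp--Lieb inequality, and then pass to the general upper-semicontinuous case by approximation. The smooth reduction should proceed by mollifying $\psi$ to $\psi_\epsilon := \psi\ast \rho_\epsilon$ (convolution preserves $F\star\mathcal P$-subharmonicity because $F\star\mathcal P$ is a convex Dirichlet set), then replacing $\psi_\epsilon$ by $\psi_\epsilon + \epsilon|y|^2$ to enforce integrability in $y$ and strict positive-definiteness of $\psi_{yy}$. Stability of $F$-subharmonicity under decreasing limits in the Harvey--Lawson framework should then return the conclusion for $\psi$ itself.

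In the smooth case, write $u(x) := -\log \int_{\mathbb R^m} e^{-\psi(x,y)}dy$ and let $d\nu_x(y) := e^{-\psi(x,y)+u(x)}\,dy$, which is a log-concave probability measure on $\mathbb R^m$ (by convexity of $\psi$ in $y$). Differentiating twice under the integral sign yields
\begin{equation*}
\nabla_x u(x) = \int \nabla_x \psi\, d\nu_x, \qquad \Hess_x u(x) = \int \psi_{xx}\, d\nu_x \;-\; \mathrm{Cov}_{\nu_x}(\nabla_x \psi),
\end{equation*}
where $\mathrm{Cov}_{\nu_x}(\nabla_x\psi)$ denotes the $n\times n$ covariance matrix of the vector-valued function $y\mapsto \nabla_x\psi(x,y)$ under $\nu_x$. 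The key analytic input is the matrix form of the Brascamp--Lieb inequality applied to $\nu_x$: for every $v\in \mathbb R^n$,
\begin{equation*}
v^T \mathrm{Cov}_{\nu_x}(\nabla_x\psi)\, v \;\le\; \int v^T\, \psi_{xy}\, \psi_{yy}^{-1}\, \psi_{yx}\, v\, d\nu_x.
\end{equation*}
Substituting this into the Hessian formula produces $\Hess_x u(x) \ge \int S(x,y)\, d\nu_x(y)$ in the positive semidefinite order, where $S := \psi_{xx} - \psi_{xy}\,\psi_{yy}^{-1}\,\psi_{yx}$ is the Schur complement of the full Hessian of $\psi$.

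The $F\star\mathcal P$-subharmonicity hypothesis now enters as follows. Fix $(x_0,y_0)$ and consider the affine map $\Gamma(x) := y_0 - \psi_{yy}^{-1}\psi_{yx}\big|_{(x_0,y_0)}(x-x_0)$. By property (2) in the definition of $F\star\mathcal P$-subharmonicity, the function $x\mapsto \psi(x,\Gamma(x))$ is $F$-subharmonic, so its Hessian at $x_0$ lies in $F$; a direct calculation shows that this Hessian equals exactly $S(x_0,y_0)$. Hence $S(x,y)\in F$ pointwise. Since $F$ is closed and convex, a Jensen-type argument for closed convex sets against the probability measure $\nu_x$ gives $\int S\, d\nu_x \in F$. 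Finally, because $\Hess_x u(x) - \int S\, d\nu_x$ is positive semidefinite and $F$ is a Dirichlet set, we conclude that $\Hess_x u(x) \in F$, which is what must be checked for $F$-subharmonicity of $u$ in the smooth case.

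The main obstacle is the passage from smooth to general upper-semicontinuous $\psi$. The smooth-case argument above is essentially algebraic: given the matrix Brascamp--Lieb inequality, everything else is differentiation under the integral sign and the Schur-complement identity. But to make the viscosity conclusion rigorous one must verify that the mollifications $\psi_\epsilon + \epsilon|y|^2$ are genuinely $F\star\mathcal P$-subharmonic (convolution and addition of a function depending only on $y$ both preserve the class), that the corresponding $u_\epsilon$ are smooth and well-defined (requiring uniform lower bounds on $\psi_{yy}$ and tail estimates), and that $u_\epsilon \to u$ in a sense sufficient to transport $F$-subharmonicity to the limit (e.g.\ via a decreasing approximation and upper-semicontinuous regularization, which preserves $F$-subharmonicity in the Harvey--Lawson theory). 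Care is also needed to ensure the Brascamp--Lieb inequality holds in the full matrix-valued form required here.
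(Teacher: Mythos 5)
Your smooth-case argument takes a genuinely different route from the paper's. The paper derives the Ball--Barthe--Naor transport identity
\[
\Hess(\phi) = \frac{1}{\int e^{-\psi}dy}\left(\int e^{-\psi}(\partial_y\Gamma)^T(\partial_y\Gamma)\,dy + \int e^{-\psi}\,i_\Gamma^*\Hess(\psi)\,dy\right),
\]
with $\Gamma$ coming from a one-dimensional transport map, and then inducts on $m$; you instead differentiate twice under the integral sign, write $\Hess_x u = \int\psi_{xx}\,d\nu_x - \mathrm{Cov}_{\nu_x}(\nabla_x\psi)$, and invoke the (vector-valued) Brascamp--Lieb variance inequality to bound the covariance term, arriving at $\Hess_x u \ge \int S\,d\nu_x$ in the PSD order, where $S$ is the Schur complement. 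Both arguments then close the same way: $S = i_\Gamma^*\Hess\psi$ for $\Gamma=-\psi_{yy}^{-1}\psi_{yx}$ lies in $F$ pointwise, convexity of $F$ makes the average land in $F$, and the Dirichlet-set property absorbs the PSD remainder. The paper's BBN identity is in effect a self-contained re-derivation of Brascamp--Lieb (the paper explicitly notes the overlap with \cite[(4.7)]{BrascampLieb}), which buys them a proof that does not cite the inequality but costs an $m=1$ reduction and induction; your version works in $\mathbb R^m$ in one shot, at the price of taking Brascamp--Lieb as a black box. As a matter of logic your smooth-case argument is correct.

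The gap is in the approximation step, and it is a real one. You propose mollifying $\psi$ directly and then appealing to decreasing limits, but this doesn't work as written: a general upper-semicontinuous $F\star\mathcal P$-subharmonic $\psi$ need not be locally integrable (it may be $-\infty$ on a large set and is only usc in $x$), so $\psi*\rho_\epsilon$ is not a priori defined; and even when it is, mollification gives locally \emph{uniform} convergence, not the monotone decreasing convergence you invoke. The paper's \S5 bridges exactly this gap with a chain of reductions that your sketch is missing: first truncate so $\psi$ is bounded below; next replace $\mathbb R^m$ by a bounded convex $V$; then glue on a controlled quadratic $A\|y\|^2+v(x)$ outside $V$ so that $\psi$ has prescribed behaviour at infinity; then \emph{sup-convolution} (Appendix A) to make $\psi$ continuous while staying $F\star\mathcal P$-subharmonic, which does produce a decreasing family; and only then mollification of the now-continuous function, with convergence handled by the uniform-limit stability of $F$-subharmonicity. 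The sup-convolution intermediary is the key missing idea: without it, the jump from usc to smooth is not justified, and your proof as written is incomplete at that point.
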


In turn this gives the following generalization of the Brunn-Minkowski Theorem.  For this we need to generalize the notion of a convex subset $K$, which we take as the existence of a defining $F\star P$-subharmonic function.  The precise statement is:

\begin{theorem}[Brunn-Minkowski inequality for $F$-subharmonicity]\label{thm:BMI:intro} Let $X\subset \mathbb R^n$ be open.  
Assume that $F\subset \Sym^2(\mathbb R^n)$ is a Dirichlet set that is a convex cone over $0$.  Suppose that $K\subset X\times \mathbb R^m$ is closed and bounded and such that 
\begin{enumerate}
\item $(\Int K)_x = \Int(K_x)$ and is non-empty for all $x\in X$.
\item  There is an $F\star \mathcal P$-subharmonic function $\rho$ on a neighbourhood of $K$ such that $K = \{\rho \le 0\}$.  
\end{enumerate}
Then the map $x\to -\log \vol(K_x)$ is  $F$-subharmonic on $X$.
\end{theorem}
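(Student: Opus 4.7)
The plan is to derive this from the Prekopa theorem for $F$-subharmonicity stated above, by approximating the characteristic function $\mathbf{1}_K$ from below by an increasing family of log-concave densities $e^{-\psi_k}$, each with $\psi_k$ being $F\star\mathcal P$-subharmonic. Prekopa will then give that $h_k(x) := -\log \int_{\mathbb{R}^m} e^{-\psi_k(x,y)}\,dy$ is $F$-subharmonic on $X$, the $h_k$ will increase to $-\log \vol(K_x)$, and the limit will be $F$-subharmonic once we know it is upper-semicontinuous.

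The natural choice is $\psi_k := k\max(\rho,0)$ on the neighbourhood $U$ of $K$ where $\rho$ is defined. Since $F$ is a convex cone over $0$, the zero function is $F\star\mathcal P$-subharmonic, so $\max(\rho,0)$ and its positive multiples are $F\star\mathcal P$-subharmonic (using the standard fact that pointwise maxima of $F\star\mathcal P$-subharmonic functions are $F\star\mathcal P$-subharmonic). The technical subtlety is that Prekopa requires $\psi_k$ to be defined on all of $X\times\mathbb{R}^m$. Since $F$-subharmonicity is local in $x$, I would work in a neighbourhood $V$ of each $x_0\in X$ together with a bounded ball $B\subset\mathbb{R}^m$ chosen so that $\overline{V}\times\overline{B}\subset U$ and $K\cap(V\times\mathbb{R}^m)$ is relatively compact in $V\times B$; on $V\times\partial B$ one has $\rho\ge\delta>0$ by compactness. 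The extension to $V\times\mathbb{R}^m$ is arranged by taking the pointwise max of $\psi_k$ with a suitable convex function of $y$ alone, chosen large enough to dominate on $V\times\partial B$ but negligible on a compact neighbourhood of $K$; taking the max with such a function preserves $F\star\mathcal P$-subharmonicity because condition (1) is just max of convex being convex, and condition (2) reduces to a convex function of $y$ composed with an affine map being convex in $x$, hence $F$-subharmonic since $\mathcal P\subset F$.

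With $\psi_k$ defined on $V\times\mathbb{R}^m$, Prekopa gives that $h_k$ is $F$-subharmonic on $V$. Since $\psi_k$ is pointwise increasing in $k$, so is $h_k$, and dominated convergence yields $\int e^{-\psi_k(x,y)}\,dy \to \vol(K_x)$, so $h_k\uparrow -\log \vol(K_x)$. To invoke the standard fact that an increasing limit of $F$-subharmonic functions is $F$-subharmonic once upper-semicontinuous, I need $x\mapsto \vol(K_x)$ to be lower-semicontinuous. This is where hypothesis (1) enters: if $x_n\to x$ and $y\in\Int(K_x)=(\Int K)_x$ then $(x_n,y)\in K$ for all large $n$, so $\Int(K_x)\subset\liminf_n K_{x_n}$ as sets, and Fatou's lemma applied to their characteristic functions gives $\vol(\Int K_x)\le\liminf_n \vol(K_{x_n})$; combined with $\vol(\Int K_x)=\vol(K_x)$ (true since $K_x$ is convex with non-empty interior, so has boundary of Lebesgue measure zero), this is the required semicontinuity. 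The main obstacle is the extension step: arranging an extension of $\psi_k$ to $X\times\mathbb{R}^m$ that simultaneously preserves $F\star\mathcal P$-subharmonicity, makes $e^{-\psi_k}$ integrable in $y$, and causes $h_k$ to converge to $-\log\vol(K_x)$; everything else reduces to routine applications of Prekopa and monotone limits of $F$-subharmonic functions.
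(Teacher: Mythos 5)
Your argument is correct and is essentially the paper's own proof: the paper likewise takes $\psi_n=\max\{n\rho,0\}$, applies its Prekopa theorem (Theorem \ref{thm:prekopageneral}), passes to the increasing limit, and obtains upper-semicontinuity of $x\mapsto-\log\vol(K_x)$ from hypothesis (1) by exactly your Fatou/null-boundary argument (Lemma \ref{lem:semicontinuousi}). The only real difference is your extension of $\psi_k$ to all of $\mathbb R^m$ in $y$ (your stated main obstacle), which is unnecessary since Theorem \ref{thm:prekopageneral} already allows $X\times V$ with $V$ a bounded open convex set containing the nearby slices $K_x$ (your extension just repeats Step 3 of its proof); note also that upper-semicontinuity of $\rho$ gives an upper bound on $V\times\partial B$, not the lower bound $\rho\ge\delta>0$ you assert, but only the upper bound (with the cutoff function scaled with $k$) is actually needed for such a gluing.
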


Also from Prekopa's Theorem we can deduce easily a minimum principle.

\begin{theorem}[Minimum Principle for F-subharmonicity]\label{thm:minimumintroduction}
Let $X\subset \mathbb R^n$ be open and assume that $F\subset \Sym^2(\mathbb R^n)$ is a Dirichlet set.  Let $V\subset \mathbb R^m$ be convex and suppose that $\psi$ is $F\star \mathcal P$-subharmonic on $X\times V$.  Then the function 
$$x\mapsto \inf_y \psi(x,y)$$
is $F$-subharmonic.
\end{theorem}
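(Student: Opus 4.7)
The plan is to reduce to the smooth case and identify the Hessian of $u(x) := \inf_y \psi(x,y)$ with the Hessian of a specific affine slice of $\psi$, to which the defining property of $F \star \mathcal P$-subharmonicity applies directly. Concretely, suppose $\psi$ is smooth and strictly convex in $y$; then the minimizer $y^*(x)$ is unique and depends smoothly on $x$ by the implicit function theorem, and the chain rule together with the identity $\partial_y \psi(x, y^*(x)) \equiv 0$ yields
\[
u''(x_0) \;=\; \begin{pmatrix} I \\ A_0 \end{pmatrix}^{\!T} \Hess\psi(x_0, y^*(x_0)) \begin{pmatrix} I \\ A_0 \end{pmatrix}, \qquad A_0 := (y^*)'(x_0) = -H_{yy}^{-1}H_{yx}.
\]
Equivalently, letting $\Gamma(x) := y^*(x_0) + A_0(x - x_0)$ be the affine tangent to the minimizer path at $x_0$, one has $u''(x_0) = (\psi \circ \Gamma)''(x_0)$, which lies in $F$ because $\Gamma$ is affine and $\psi$ is $F \star \mathcal P$-subharmonic.

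To reduce the general upper-semicontinuous case to this smooth strictly-convex setting I would first replace $\psi$ by $\psi^\delta(x,y) := \psi(x,y) + \tfrac{\delta}{2}|y|^2$ for $\delta > 0$. This remains $F \star \mathcal P$-subharmonic: convexity in $y$ is clear, and along any affine slice $\Gamma(x) = Lx+b$ one adds the convex function $\tfrac{\delta}{2}|\Gamma(x)|^2$ of $x$, whose Hessian $\delta L^T L$ lies in $\mathcal P$, so the sum remains $F$-subharmonic by the Dirichlet property $F + \mathcal P \subseteq F$. Moreover $u^\delta(x) := \inf_y \psi^\delta(x,y)$ decreases to $u(x)$ as $\delta \downarrow 0$, so (using that decreasing limits of $F$-subharmonic functions are $F$-subharmonic) it suffices to prove each $u^\delta$ is $F$-subharmonic. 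Next I would mollify $\psi^\delta$ by convolution with a nonnegative smooth kernel on slightly shrunken subdomains; each translate of $\psi^\delta$ is $F \star \mathcal P$-subharmonic, and convexity of $F$ and $\mathcal P$ ensures that their integral average is too. Applying the smooth-case Schur complement identity to the regularized $\psi$ and then passing back through the two regularizations using Harvey--Lawson closure properties under decreasing and mollified limits completes the argument.

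The main obstacle is analytic rather than algebraic. First, one must ensure that the infimum defining $u^\delta$ is actually attained at an interior point of $V$ so the implicit function theorem applies. The added $\tfrac{\delta}{2}|y|^2$ provides uniform coercivity on $x$-compact subsets once $\delta > 0$, but if $V$ is unbounded one may need to further exhaust $V$ by large bounded convex subsets and argue that the minimum over each such subset eventually stabilizes in the interior. Second, careful justification is needed for the convolution step in the upper-semicontinuous setting; here one works on relatively compact subdomains and invokes the standard viscosity-theoretic approximation results of Harvey and Lawson, which guarantee that $F$-subharmonicity passes through the appropriate smoothing limits.
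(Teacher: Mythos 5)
Your proposal takes a genuinely different route from the paper. The paper deduces the minimum principle \emph{from} its Prekopa Theorem via Berndtsson's $L^p\to L^\infty$ trick: one applies Prekopa to $p\psi_p$ with $\psi_p=\psi+\tfrac1p|y|^2$, so that $pM(p\psi_p)=-\log\|e^{-\psi_p}\|_{L^p}$ decreases to $\inf_y\psi$. Your approach bypasses Prekopa entirely: the Schur-complement identity $\Hess u(x_0)=i_{A_0}^*\Hess\psi(x_0,y^*(x_0))$ with $A_0=-H_{yy}^{-1}H_{yx}$ is the first-variation/envelope formula, and it places $\Hess u(x_0)$ directly in $F$ because it is the Hessian of the affine slice $x\mapsto\psi(x,\Gamma(x))$. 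That computation is correct. Your approach also buys something real: multiplying $\psi_p$ by $p$ (as the paper does) requires $F$ to be a cone over $0$, while your single additive perturbation $\tfrac\delta2|y|^2$ preserves $F\star\mathcal P$-subharmonicity using only the Dirichlet property $F+\mathcal P\subset F$ (indeed, $\tfrac1p|y|^2$ has Hessian in $\mathcal P_{n+m}$ and $F\star\mathcal P$ is itself a Dirichlet set, so even the paper's parenthetical that the cone hypothesis is needed for $\psi_p$ alone is a slight overstatement --- it is $p\psi_p$ that requires the cone). Both routes ultimately need $F$ convex, yours for the mollification step and theirs for Prekopa itself.

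Where your argument has a genuine gap is precisely the issue you flag last: the envelope computation requires the infimum to be \emph{attained at an interior point}, smoothly in $x$. The $\delta|y|^2$ term provides coercivity and does the job when $V=\mathbb R^m$, but for a general bounded convex $V$ the minimizer of $\psi^\delta(x,\cdot)$ over any exhausting convex $V'\subset\subset V$ may still sit on $\partial V'$, and the Schur-complement formula simply does not apply there; moreover $\inf_{V'}$ is not monotone in the modifications $\max\{\psi,n\rho+u\}$ that the paper uses to reduce to $V=\mathbb R^m$, so that particular device does not transfer cleanly. This is exactly what the Prekopa-based route is designed to sidestep: the integral $-\tfrac1p\log\|e^{-\psi_p}\|_{L^p}$ makes sense and converges to $\inf\psi$ whether or not the infimum is attained. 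To complete your direct proof you would need to supply an argument (say, a one-sided barrier showing interior attainment on a full-measure set, or a separate treatment of boundary minimizers via test functions in the viscosity sense) before the reduction chain is airtight. Everything else --- the added quadratic, the mollification under convexity of $F$, and passage through decreasing limits --- is standard and fine.
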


Bringing this back to interpolation, suppose again  $\Omega\subset \mathbb R^n$ is open and bounded with smooth boundary and we are given a family of convex functions $\phi_\tau:\mathbb R^m\to \mathbb R$ such that $(\tau,y)\mapsto \phi_\tau(y)$ is continuous.   We then have an interpolation of this data given by the Perron envelope

\begin{equation}\Phi:= {\sup}^*\left\{\zeta: \begin{array}{l} \zeta \text{ is upper-semicontinuous on }\overline{\Omega}\times \mathbb R^m, \\
F\star \mathcal P\text{-subharmonic on }\Omega\times \mathbb R^m \text{ and } \Phi|_{\partial \Omega\times \mathbb R^m} \le \phi\end{array}\right\}.\label{eq:perron:intro}
\end{equation}


Under suitable boundary convexity of $\Omega$ (that is defined in terms of $F$) and a mild technical assumption on the data $\phi_\tau$ (see Definition \ref{def:locallycomparable})  will see that this envelope is $F\star \mathcal P$-subharmonic and $\Phi|_{\partial \Omega\times \mathbb R^m} = \phi$.  In particular for each $x\in \Omega$ the map $y\mapsto \Phi(x,y)$ is convex, and thus we have an interpolation of the boundary data $\phi$ for each such Dirichlet set $F$ to which our Prekopa Theorem applies.    Moreover we will show in Proposition \ref{prop:harmonicasenvelope} that when $F=F_{sub}$ then $\Phi$ is precisely the harmonic interpolation discussed above.

\subsection{Comparison with other work} 

We refer the reader to Gardner's survey \cite{Gardner} for an account of the classical Brunn-Minkowski Theorem and  Prekopa's Theorem (also called the Brunn-Minkowski inequality and the Prekopa  or Prekopa-Leindler inequality respectively).  The reader will also find there many connections to other inequalities in geometry and analysis.  

  The complex version of the Brunn-Minkowski and Preokopa Theorems is a theme in work of Berndtsson \cite{Berndtsson_Prekopa,Berndtsson_convexityKahler,Berndtsson_survey,Berndtsson_openness,Berndtsson_realandcomplexBM,Berndtsson_complexBM_and_geometry} where it has in particular found applications to positivity of vector bundles and Fano manifolds (see also \cite{Paun_survey} for a survey).    This has since been taken up by others (e.g. Cordero-Erausquin \cite{Cordero-Erausquin} and Nguyen \cite{Nguyen}).

The ideas of F-subharmonicity we use are taken from the works of Harvey-Lawson (e.g. \cite{HL_Potentialalmostcomplex,HL_Dirichletdualitymanifolds,HL_Geometricplurisubharmonicity,HL_Dirichletduality,HL_equivalenceviscosity,HL_pconvexity,HL_Dirichletprescribed}) who in turn are building on various parts of the viscosity approach to differential equations (e.g. Krylov \cite{Krylov}).     The idea of proving the Brunn-Minkowski and Prekopa Theorem's in this context appears to be new, although it does have some overlap with existing ideas (some more of of which are described below).    For instance looking closely one can find overlap between a particular case of our product Dirichlet set and the version of the Prekopa Inequality found in \cite{BrascampLieb} (compare in particular Proposition \ref{prop:characterization1} and  \cite[(4.7)]{BrascampLieb}).

The minimum principle for plurisubharmonic functions goes under the name of the Kiselman Minimum Principle \cite{KiselmanInvent}.  The authors of this paper have previously proved a minimum principle for $F$-subharmonicity \cite{ross_nystrom_minimum} that is slightly stronger that Theorem \ref{thm:minimumintroduction} in that it applies also to subequations $F$ that may depends on the gradient as well as the Hessian part (see also Darvas-Rubinstein \cite{darvas-Rubinstein} for earlier work in this direction).

The authors do not have sufficient expertise to survey the huge amount of work on interpolation, so instead the reader is referred to \cite{BoInterpolation,Coifmanetal,SemmesCoifmanInterpolation,Rochberg_Semmes,Rochberg_interpolation} for just a few examples where this is studied.  In this paper we focus on interpolation of convex functions on finite dimensional vector spaces, but there has been considerable interest in functional analysis on interpolation in infinite dimensions (see \cite{interpolationbook} for an introduction).    Emphasizing in particular the complex setting, Semmes \cite{SemmesInterpolation} describes various aspects of the interpolation problem that are closely related to the work in this paper.

There is also some overlap with the ``generalized subharmonicity" work of Slodkowski \cite{SlodkowskiI,SlodkowskiIII,SlodkowskiII,Slodkowski_harmonic_I,Slodkowski_harmonic_II,Slodkowski_harmonic_III} (who also uses the term ``harmonic interpolation"), although the authors find that casting these ideas within the framework of $F$-subharmonicity is much clearer.    

\subsection{Future Directions}  The theory of $F$-subharmonicity extends to the complex case (where one considers the complex Hessian) as well as to Riemannian manifolds.  We hope to consider whether our Prekopa Theorem holds in these settings in a future work.

\subsection{Organization}  The next section contains a brief account of the pieces from the theory of F-subharmonic functions that we will need.  In \S\ref{sec:products} we define with some care the product of Dirichlet sets, and give some examples.  The smooth case of Prekopa's Theorem for $F$-subharmonicity is then given in \S\ref{sec:prekopasmooth} using a formula for the Hessian due to Ball-Barthe-Naor \cite{BallBartheNaor}.   The general case (so without the smoothness hypothesis) is proved in \S\ref{sec:prekopanonsmooth} using an approximation argument.   Then in \S\ref{sec:BMandminimum} and \S\ref{sec:minimumprinciple} we show how this implies our Brunn-Minkowski Theorem and Minimum Principle for $F$-subharmonicity.     In \S\ref{sec:example} we give a simple but very explicit example of the Brunn-Minkowski theorem in action, and in \S\ref{sec:Bo} we show how our work implies Berndtsson's Prekopa Theorem.  Finally in \S\ref{sec:interpolation} we discuss the interpolation problem for convex functions and convex sets, and prove Theorems \ref{thm:introBM} and \ref{thm:introPrekopa}.

\subsection{Acknowledgements} The authors thank   Bo Berndtsson, Ruadha\'{i} Dervan, Tommy Murphy, Lars Sektnan and Xiaowei Wang for helpful conversations.  We also thank Tristan Collins and Sebastien Picard for sharing with us their provisional work on geodesics in the space of $m$-subharmonic functions and Richard Rochberg for pointing out interesting references on interpolation.    The first author is supported by the NSF grant DMS-1749447.  The second author is supported by a grant from the Swedish Research Council and a grant from the G\"{o}ran Gustafsson Foundation for Natural Science and Medicine.

\section{Background on $F$-subharmonicity}\label{sec:backgroundFsubharmonicity}

\subsection{Basics}

We will need only a small amount of the theory of $F$-subharmonicity most of which can be found in \cite{HL_Dirichletduality}.  Let $\Sym^2(\mathbb R^n)$ denote the set of $n\times n$ symmetric matrices and let $\mathcal P=\mathcal P_n$ denote the subset of positive definite symmetric matrices.
\begin{definition}[Dirichlet sets] A subset $F\subset \Sym^2(\mathbb R^n)$ is called a \emph{Dirichlet set} if it is non-empty, proper, closed and
$$ F+ \mathcal P\subset F.$$
A \emph{convex Dirichlet set} is such an $F$ that is convex as a subset of $\Sym^2(\mathbb R^n)$, and it is a \emph{cone over 0} if $A\in F$ and $t\ge 0$ implies $tA\in F$. 
\end{definition}

\begin{definition}[$F$-subharmonicity]
Let $X\subset \mathbb R^n$ be open, $F\subset \Sym^2(\mathbb R^n)$ be a Dirichlet set, and $f:X\to \mathbb R\cup \{-\infty\}$ be upper semicontinuous.  We call $g$ a \emph{test function} for $f$ at $x_0\in X$ if $g$ is defined on a neighbourhood of $x_0$, is smooth and has $g\ge f$ on this neighbourhood and $g(x_0) = f(x_0)$.  We say that $f$ is \emph{$F$-subharmonic} if for all $x_0\in X$ and test functions $g$ at $x_0$ we have $\Hess_{x_0}(g)\in F$.

The set of $F$-subharmonic functions on $X$ is denoted by $F(X)$.
\end{definition}

Observe that by definition if $f(x_0)=-\infty$ then there can be no test function at $x_0$ so the condition there is vacuous; in particular the function $f\equiv -\infty$ is trivially $F$-subharmonic. Clearly being $F$-subharmonic is a local condition, by which we mean that if $\{X_{\alpha}\}_{\alpha\in \mathcal A}$ is an open cover of $X$ then $f\in F(X)$ if and only if $f\in F(X_{\alpha})$ for all $\alpha$.   It is not hard to show \cite[p17]{HL_Dirichletduality} that if $f$ is smooth on an open $X\subset \mathbb R^n$ then
$$f\in F(X) \text{ if and only if } \Hess_x(f)\in F \text{ for all }x\in X.$$

\begin{remark}[Comparison with other terminology] It is also possible to consider generalized subharmonicity that depends not only on the Hessian but also on the gradient as well as the value of the function at a given point by considering (suitable) subsets $F$ of the space of second order jets.  This is the point of view taken in \cite{HL_Dirichletdualitymanifolds} (and also our previous work \cite{ross_nystrom_minimum}) in which such an $F$ is referred to as a \emph{subequation}.   Certainly Dirichlet sets  are examples of subequations (more precisely in the terminology of \cite{HL_Dirichletdualitymanifolds}  they would be called a ``constant coefficient subequation that depend only on the Hessian part") \end{remark}

The following lists some of the basic properties satisfied by $F$-subharmonic functions.
\begin{proposition}\label{prop:basicproperties}\
\begin{enumerate}
\item (Maximum Property) If $f,g\in F(X)$ then $\max\{f,g\}\in F(X)$.
\item (Decreasing Sequences) If $f_j$ is decreasing sequence of functions in $F(X)$ (so $f_{j+1}\le f_j$ over $X$) then $f:=\lim_j f_j$ is in $F(X)$.
\item (Uniform limits) If $f_j$ is a sequence of functions on $F(X)$ that converge locally uniformly to $f$ then $f\in F(X)$.
\item (Families locally bounded above) Suppose $\mathcal F\subset F(X)$ is a family of $F$-subharmonic functions locally uniformally bounded from above.  Then the upper-semicontinuous regularisation of the supremum
$$ f:= {\sup}^*_{f\in \mathcal F} f$$
is in $F(X)$.
\item (Convexity) If $F$ is convex then $F(X)$ is convex.
\end{enumerate}
\end{proposition}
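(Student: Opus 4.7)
The plan is to verify the five claims via the standard viscosity-theoretic template used throughout the work of Harvey--Lawson. Part (1) is immediate from the definition: if $\phi$ is a test function for $\max\{f,g\}$ at $x_0$ and, after relabelling, $f(x_0) \ge g(x_0)$, then $\phi \ge \max\{f,g\} \ge f$ near $x_0$ with equality at $x_0$, so $\phi$ is also a test function for $f$ and hence $\Hess_{x_0}(\phi) \in F$.

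Parts (2), (3) and (4) share a common quadratic perturbation strategy. Given a test function $\phi$ for the candidate limit (namely $f$ in (2) and (3), or $u^*$ in (4)) at $x_0$, form
$$\phi_\delta(x) := \phi(x) + \delta |x - x_0|^2,$$
so that on the sphere $\partial B_r(x_0)$ one has $\phi_\delta \ge f + \delta r^2$ for any sufficiently small $r$. The goal is to exhibit points $x_j \to x_0$ at which $\phi_\delta$, shifted by an additive constant $c_j \to 0$, serves as a test function for some $f_j$ from the sequence or family, so that by hypothesis $\Hess_{x_j}(\phi) + 2\delta \Id \in F$; closedness of $F$ then propagates this to $x_0$, and sending $\delta \downarrow 0$ concludes. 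For (2) one takes $x_j$ to be an interior minimiser of $\phi_\delta - f_j$ on $\overline{B_r(x_0)}$, using that this difference is nonpositive at $x_0$ (since $f_j \ge f$) but at least $\delta r^2$ on $\partial B_r(x_0)$. For (3) local uniform control $|f_j - f| \le \delta r^2/2$ replaces monotonicity. For (4) one first invokes a Choquet-type lemma to reduce to a countable subfamily $\{f_j\} \subset \mathcal F$ with $(\sup_j f_j)^* = u^*$, and then uses the definition of the u.s.c.\ regularisation to produce $x_j \to x_0$ together with $f_j \in \mathcal F$ satisfying $f_j(x_j) \to u^*(x_0)$, after which the interior-minimiser argument proceeds as before; the increasing envelope $g_k := \max(f_1,\ldots,f_k)$ lies in $F(X)$ by (1) and mediates between the countable reduction and the pointwise supremum.

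Part (5) is the only step that uses convexity of $F$. For smooth $f, g \in F(X)$ it is immediate, since $\Hess(\lambda f + (1-\lambda)g) = \lambda \Hess f + (1-\lambda)\Hess g \in F$ by convexity. For the general upper semicontinuous case I would approximate $f$ and $g$ from above by their quadratic sup-convolutions $f^\epsilon$ and $g^\epsilon$: these are locally semiconvex, decrease pointwise to $f, g$ as $\epsilon \downarrow 0$, and (by a theorem of Harvey--Lawson in the constant-coefficient setting) inherit $F$-subharmonicity. For semiconvex functions one can verify directly, via upper contact jets and convexity of $F$, that $\lambda f^\epsilon + (1-\lambda)g^\epsilon \in F(X)$; passing to the decreasing limit $\epsilon \downarrow 0$ and invoking part (2) then yields $\lambda f + (1-\lambda)g \in F(X)$.

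The main technical hurdle common to (2)--(4) is the step $x_j \to x_0$. Rigorously, one extracts a subsequential limit $x^* \in \overline{B_r(x_0)}$ and combines upper semicontinuity in the form $\limsup_j f_j(x_j) \le f(x^*)$ with the fact that $\phi_\delta(x_j) - f_j(x_j) = c_j \to 0$ to conclude $\phi_\delta(x^*) \le f(x^*) \le \phi(x^*)$, which by construction of $\phi_\delta$ forces $x^* = x_0$. The other substantive input, required only for (5), is the preservation of $F$-subharmonicity under sup-convolution; this is standard in the Harvey--Lawson framework and would be cited rather than reproved.
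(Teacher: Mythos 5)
Your proposal is correct, but it is worth noting that the paper does not actually prove this proposition: it simply cites Harvey--Lawson ([HL, p.~16] for parts (1)--(4) and [HL, Remark B.9] for part (5)), and what you have written is essentially a reconstruction of the standard viscosity arguments behind those citations. Parts (1)--(4) are fine: the quadratic perturbation $\phi_\delta=\phi+\delta|x-x_0|^2$ together with your limit-point argument ($\limsup_j f_j(x_j)\le f(x^*)$ plus $c_j\to 0$ forcing $x^*=x_0$) is exactly the standard template; one small imprecision is that in (2) the bound $\phi_\delta-f_j\ge \delta r^2$ on $\partial B_r(x_0)$ holds only in the limit $j\to\infty$ (since $f_j\ge f$, not $f_j\le f$), but this does not matter because your limit-point argument already shows $x_j\to x_0$, hence the minimisers are interior for large $j$. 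In (4) you do not really need Choquet: the definition of the regularisation directly supplies $x_k\to x_0$ and members $u_{j_k}$ of the family with $u_{j_k}(x_k)\to f(x_0)$, and the same perturbation argument applies to the $u_{j_k}$ themselves. The only place where your sketch glosses a genuinely nontrivial ingredient is part (5) --- the part the paper flags as ``more involved'': verifying that $\lambda f^\epsilon+(1-\lambda)g^\epsilon\in F(X)$ for semiconvex approximants is not a ``direct'' check; it requires Alexandrov almost-everywhere twice differentiability together with Jensen's lemma (or, equivalently, the Harvey--Lawson almost-everywhere theorem) to convert the a.e.\ inclusion $\lambda\Hess f^\epsilon+(1-\lambda)\Hess g^\epsilon\in F$ into $F$-subharmonicity in the viscosity sense. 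With that ingredient cited explicitly (alongside the sup-convolution facts you already cite, which are in any case proved in the paper's Appendix~A), your route coincides with the proof in the reference the authors invoke.
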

\begin{proof}
The first four statement are elementary \cite[p16]{HL_Dirichletduality}.  The final one, which is more involved to prove, is \cite[Remark B.9]{HL_Dirichletduality}.
\end{proof}

\subsection{Dirichlet Duality}

\begin{definition} Let $F\subset \Sym^2(\mathbb R^n)$ be a Dirichlet set.  The \emph{Dirichlet dual} of $F$ is defined to be
$$\tilde{F} = \sim(-\Int(F)) = - (\sim \Int(F))$$
where $\sim$ means set-theoretic complement (see \cite[Sec 4]{HL_Dirichletduality}).
\end{definition}

It is easy to check that $\tilde{F}$ is also a Dirichlet set.  The key property we will use of this is the following  \cite[Definition 4.4,Remark 4.9]{HL_Dirichletduality}: if $u\in F(X)$ and $v\in \tilde{F}(X)$ is smooth then $u+v$ is subaffine.

\subsection{Boundary Convexity}
Let $F\subset \Sym^2(\mathbb R^n)$ be a Dirichlet set and $B\in \Sym^2(\mathbb R^n)$. The \emph{ray set with vertex} $B$ \emph{associated to F} is given by 
$$\overrightarrow{F_B} := \{ A\in \Sym^2(\mathbb R^n): \exists \lambda_0 : B+\lambda A\in F \text{ for all } \lambda\ge \lambda_0\}.$$
Although $\overrightarrow{F_B}$ may not be closed, its closure is independent of choice of $B$ and is called the \emph{ray set} associated to $F$ and denoted $\overrightarrow{F}$.  Then $\overrightarrow{F}$ is also a Dirichlet set \cite[\S 5]{HL_Dirichletduality} and if $F$ is a cone over the origin, then $\overrightarrow{F}=F$.\\

Now let $\Omega\subset \mathbb R^n$ be open and smoothly bounded.   A \emph{local defining function} for $\partial \Omega$ near $\tau\in \partial \Omega$ is a smooth function $\rho$ defined on some neighbourhood of $\tau$ such that $\Omega  = \{\rho<0\}$ and $\nabla \rho\neq 0$.   Let $T = T_\tau \partial \Omega$ denote the tangent space to the boundary.

\begin{definition}
We say that $\Omega$ is $\overrightarrow{F}$-convex if for all  $\tau\in \partial \Omega$ we have $\Hess_{\tau} \rho |_{T} = B|_T$ for some $B\in \Int(\overrightarrow{F})$.
\end{definition}

We refer the reader to \cite[\S 5]{HL_Dirichletduality} for more about boundary convexity, in particular how it can be described in terms of the second fundamental form of the boundary and the existence of a global defining function for $\Omega$ \cite[Theorem 5.12]{HL_Dirichletduality}.

\subsection{Some examples of convex Dirichlet Sets}\label{sec:examplessubequations}

\begin{example}[Convexity]
If $F=\mathcal P$ is the set of positive definite matrices then $F$ is a convex Dirichlet set that is a cone over the origin.  Then $F$-subharmonic functions are precisely the convex functions \cite[Prop 4.5]{HL_Dirichletduality}
\end{example}

\begin{example}[Subharmonicity]
Let $F_{sub} = \{A\in \Sym^2(\mathbb R^n): \tr(A)\ge 0\}$.  This is a convex Dirichlet set which is a cone over $0$, and the $F_{sub}$-subharmonic functions are those that are subharmonic in the usual sense.
\end{example}

\begin{example}[Dirichlet sets given by constraints on eigenvalues]
Suppose that $E\subset \mathbb R^n$ is symmetric (i.e. invariant under permuting the factors of $\mathbb R^n$).  Write $\lambda(A)$ for the vector of eigenvalues of a symmetric matrix $A$ (taken in any order) and set
$$F_E: = \{ A\in \Sym^2(\mathbb R^n): \lambda(A)\subset E\}.$$
If one assumes that $E$ is closed and that $E+ \mathbb R_{\ge 0}^n\subset E$ then $F_E$ is a Dirichlet set (see \cite[Sec. 14]{HL_Dirichletdualitymanifolds}).    The reader will observe that this generalizes the previous two examples, and clearly gives much more.
\end{example}

We claim that if $E$ is chosen to also be convex then $F_E$ is convex.    This follows from the fact \cite[Corollary 1]{Davis} that if $g:\mathbb R^n\to \mathbb R$ is a symmetric convex function then the matrix  function $\hat{g}(A) := g(\lambda(A))$ is also convex.   Any convex $E$ can be written as an intersection of half spaces $\{f_i\le 0\}$ where $f_i:\mathbb R^n\to \mathbb R$ is linear.  For any permutation $\sigma\in \Sigma_n$ let $f_i^{\sigma}(x_1,\ldots,x_n) = f(x_{\sigma(1)},\ldots, x_{\sigma(n)})$ and set $g_i = \max_{\sigma\in \Sigma_n} \{ f_i^\sigma\}$,  Then each $g_i$ is convex and $E=\bigcap_i \{g_i\le 0\}$.   So since the corresponding matrix functions $\hat{g_i}$ are also convex, we conclude that $F_E$ is convex.



\section{Products of Dirichlet Sets}\label{sec:products}

\subsection{Definition of products}

In this section we define product of Dirichlet sets.   To do so fix positive integers $n,m$.  Then any $A\in\Sym^2(\mathbb R^{n+m})$ can be written in block form as 
\begin{equation} A = \left(\begin{array}{cc} B & C \\ C^T & D \end{array}\right)\label{eq:blockform}\end{equation}
where $B\in \Sym^2(\mathbb R^n), D\in \Sym^2(\mathbb R^m)$ and $C\in M_{n\times m}$.  We set $\pi(A):=D$ and for $\Gamma\in  M_{m\times n}$ 
\begin{equation}i_{\Gamma}^* A := (I, \Gamma)^T A (I,\Gamma) = B + C\Gamma + \Gamma^T C^T + \Gamma D \Gamma^T.\label{eq:defistargamma}\end{equation}
\begin{definition}
For $F\subset \Sym^2(\mathbb R^n)$ and $G\subset \Sym^2(\mathbb R^m)$ define
$$ F\star G := \left\{  A\in \Sym^2(\mathbb R^{n+m}) : \pi(A)\in G \text{ and } i_{\Gamma}^* A \in F \text{ for all } \Gamma\in M_{m\times n}\right\}.$$
\end{definition}


It is easy to verify that if $F$ and $G$ are Dirichlet sets then so is $F\star G$. We will be most interested in the case that $G= \mathcal P$ and $F$ is convex in which case one of the conditions that define this product can be dropped.

\begin{lemma}
Assume $F$ is a convex Dirichlet set.  If $i_{\Gamma}^* A \in $ for all $\Gamma\in M_{m\times n}$ then $\pi(A)\in \mathcal P$.   In particular
$$ F\star \mathcal P := \left\{  A\in \Sym^2(\mathbb R^{n+m}) :  i_{\Gamma}^* A \in F \text{ for all } \Gamma\in M_{m\times n}\right\}.$$
\end{lemma}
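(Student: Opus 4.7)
The plan is to show that the two defining conditions for $F\star \mathcal P$ are in fact redundant when $F$ is convex: the family of constraints $i_\Gamma^* A \in F$ as $\Gamma$ varies already forces $\pi(A) = D$ to be positive semidefinite.

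Write $A$ in block form as in \eqref{eq:blockform}, so
\[ i_\Gamma^* A = B + C\Gamma + \Gamma^T C^T + \Gamma^T D\Gamma. \]
The first move is to kill the linear-in-$\Gamma$ terms: replace $\Gamma$ by $-\Gamma$ and take the midpoint, using convexity of $F$, to conclude $B + \Gamma^T D\Gamma \in F$ for every $\Gamma \in M_{m\times n}$. Next, rescale: substituting $t\Gamma$ gives $B + t^2(\Gamma^T D\Gamma) \in F$, i.e.\ $B + s\,(\Gamma^T D\Gamma) \in F$ for every $s\ge 0$. In particular $B\in F$ (from $s=0$), and by definition of the ray set this says $\Gamma^T D\Gamma \in \overrightarrow{F_B}\subset \overrightarrow{F}$ for every $\Gamma$.

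Now I specialize $\Gamma$ to rank one. For $v\in \mathbb R^m$ and $w\in \mathbb R^n$, take $\Gamma = vw^T$; then $\Gamma^T D\Gamma = (v^T Dv)\,ww^T$, so
\[ (v^T D v)\, ww^T \in \overrightarrow{F}\quad\text{for every } v\in\mathbb R^m,\; w\in \mathbb R^n. \]
Suppose for contradiction that $v^T D v<0$ for some $v$. Since $\overrightarrow{F}$ is a cone over the origin, dividing by the positive constant $|v^T Dv|$ yields $-ww^T \in \overrightarrow{F}$ for every $w\in\mathbb R^n$. The aim is now to propagate this into the statement $\overrightarrow{F} = \Sym^2(\mathbb R^n)$, contradicting the properness built into the definition of a Dirichlet set.

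This is where the convexity of $F$ is used a second time. A short check (take two elements of $\overrightarrow{F_B}$, choose a common $\lambda_0$, form the midpoint of $B+\lambda A_1$ and $B+\lambda A_2$ using convexity of $F$) shows that $\overrightarrow{F_B}$, and hence its closure $\overrightarrow{F}$, is a convex cone over $0$. Summing the elements $-e_i e_i^T \in \overrightarrow{F}$ for $i = 1,\ldots, n$ gives $-I \in \overrightarrow{F}$, and by the cone property $-tI \in \overrightarrow{F}$ for all $t \ge 0$. Since $\overrightarrow{F}$ is a Dirichlet set, $\overrightarrow{F}+\mathcal P \subset \overrightarrow{F}$; for any $M\in\Sym^2(\mathbb R^n)$ we choose $t$ so large that $M+tI\in \mathcal P$ and write $M = -tI + (M+tI) \in \overrightarrow{F}$. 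Thus $\overrightarrow{F}=\Sym^2(\mathbb R^n)$, contradicting its properness. Therefore $v^T D v\ge 0$ for every $v$, i.e.\ $D\in \mathcal P$.

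The main subtlety — and the only place the argument could slip — is the passage from convexity of $F$ to convexity and conicity of $\overrightarrow{F}$, which is what allows us to sum the rank-one directions $-e_ie_i^T$ freely and to scale $-I$ up; everything else is routine bookkeeping.
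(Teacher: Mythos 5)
Your proof is correct, and it takes a genuinely different route from the paper's. The paper fixes $w$ with $w^T D w = -1$, picks a finite family of maps $\Gamma_i$ onto $\operatorname{span}(w)$ whose kernels intersect trivially, averages the constraints $i_{\Gamma_i}^*A\in F$, and then rescales the $\Gamma_i$ so that the quadratic term $\sum_i \Gamma_i^T D\Gamma_i$ dominates the linear-in-$\Gamma_i$ error terms and makes $B' - E$ positive definite for an arbitrary target $B'$; the contradiction is that $F$ itself is then all of $\Sym^2(\mathbb R^n)$. You instead symmetrize $\Gamma\mapsto -\Gamma$ to kill the linear terms at the outset, land cleanly in the ray set via the rescaling $B + s\,\Gamma^T D\Gamma\in F$, specialize to rank-one $\Gamma = vw^T$ to isolate $(v^T Dv)\,ww^T \in \overrightarrow{F}$, and derive the contradiction at the level of $\overrightarrow{F}$. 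What you buy is a cleaner separation of concerns: no bookkeeping of error terms growing at rate $O(t)$ against a quadratic, and the conceptual content is visible (the ray set of a convex Dirichlet set is a proper closed convex cone, and it cannot contain all $-ww^T$). What the paper's version buys is self-containment: it never needs the facts that $\overrightarrow{F}$ is a Dirichlet set, is closed under positive scaling, or is convex, all of which you invoke and, for convexity of the ray set, correctly sketch a verification. One small caution: you should make explicit that $-I$ is obtained as $n$ times the convex combination $\tfrac1n\sum_i(-e_ie_i^T)$, using first convexity and then the cone property of $\overrightarrow{F}$, so that no claim of additive closure is smuggled in.
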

\begin{proof}
We will show that if the first statement does not hold then actually $F$ is not proper (which is forbidden in the definition of a Dirichlet set).    So suppose there is an $A\in F\star \mathcal P$ written in block form as in \eqref{eq:blockform} but with $D\notin \mathcal P$, i.e. there is a $w\in \mathbb R^m$ with $w^T D w=-1$.  Next fix linear maps $\Gamma_i:\mathbb R^n\to span(w)\subset \mathbb R^m$ for $i=1,\ldots, N$ taken so that $\bigcap_i ker(\Gamma_i)= \{0\}$.   We then have $B + C\Gamma_i + \Gamma_i^T C + \Gamma_i D \Gamma_i^T \in F$ for each of these $\Gamma_i$.  So by convexity of $F$
$$ E : = B + \frac{1}{N}\sum_i C\Gamma_i + \Gamma_i^T C^T + \Gamma_i^T D\Gamma_i\in F.$$
Now consider any $B'\in \Sym^2(\mathbb R^n)$.  We claim that by rescaling each $\Gamma_i$ if necessary, that $B'-E\in \mathcal P$.  From this it follows that $B' = E+B'-E\in F + \mathcal P \subset F$ which means $F = \Sym^2(\mathbb R^n)$ which is absurd

For the claim suppose $v\in \mathbb R^n$ is a unit vector.   Then $\Gamma_i(v) = \lambda_i w$ for some $\lambda_i\in \mathbb R$ with at least one of the $\lambda_i$ non-zero. Then $B'-E = \sum_i \Gamma_i^T D \Gamma_i + \Delta$ where $\Delta$ is a sum of terms that are constant or linear in $\Gamma_i$.  Then putting $\delta: = v^T \Delta v$ we have
$$ v^T(B'-E)v =  -\sum_i v^T \Gamma_i^T D \Gamma_i v + \delta = -\sum_i \lambda_i^2w^T D w + \delta  = \sum_i \lambda_i^2 + \delta.$$
Now, replacing each $\Gamma_i$ by $t\Gamma_i$ for some large $t$ replaces each $\lambda_i$ with $t\lambda_i$ and so the (strictly positive) quadratic term dominates the term $\delta$ which is $O(t)$, making $v^T (B'-E)v$ strictly positive. Moreover we can do this uniformly over all unit vectors $v$.   So the claim is shown and the proof is complete.
\end{proof}

The following characterizes $F\star \mathcal P$-subharmonic functions (and confirms that the definition from this section agrees with that from the introduction). 

\begin{proposition}\label{prop:slices}
Assume that $F\subset \Sym^2(\mathbb R^n)$ and $G\subset \Sym^2(\mathbb R^m)$ are Dirichlet sets.   Let $X\subset \mathbb R^{n}$ and $Y\subset \mathbb R^m$ be open and $f:X\times Y\to \mathbb R\cup \{-\infty\}$ be upper-semicontinuous.  The following are equivalent
\begin{enumerate}
\item $f$ is $F\star G$-subharmonic
\item $\begin{array}{ll}
x\mapsto f(x,y_0+ \Gamma x) \text{ is } F\text{-subharmonic for all } y_0\in Y\text{ and }\Gamma\in M_{m\times n} \text{ and} \\
$$y\mapsto f(x_0,y) \text{ is } G\text{-subharmonic for all } x_0\in X.\\
\end{array}$
\end{enumerate}
\end{proposition}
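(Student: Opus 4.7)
The plan is to prove the two implications separately. Direction $(2)\Rightarrow(1)$ is a routine chain-rule argument, while $(1)\Rightarrow(2)$ requires a viscosity-style penalization and is the substantive direction.

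For $(2)\Rightarrow(1)$, let $\tilde{g}$ be a test function for $f$ at $(x_0,y_0)$. The restriction $y\mapsto \tilde{g}(x_0,y)$ is a test function for $f(x_0,\cdot)$ at $y_0$, so its Hessian at $y_0$---which equals $\pi(\Hess\tilde{g}(x_0,y_0))$---lies in $G$ by hypothesis. Similarly, for any $\Gamma\in M_{m\times n}$ the smooth function $x\mapsto \tilde{g}(x,y_0+\Gamma(x-x_0))$ is a test function at $x_0$ for the slice $x\mapsto f(x,y_0'+\Gamma x)$ with $y_0':=y_0-\Gamma x_0$, and by the chain rule its Hessian at $x_0$ is exactly $i_{\Gamma}^*\Hess\tilde{g}(x_0,y_0)$, which must therefore lie in $F$. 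Together these place $\Hess\tilde{g}(x_0,y_0)\in F\star G$.

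For $(1)\Rightarrow(2)$, the difficulty is that a test function $g$ on a slice does not automatically extend to a test function for $f$ on the product, since $f$ is only upper semi-continuous off the slice. My plan is a penalization argument. Consider first the $G$-subharmonicity of $y\mapsto f(x_0,y)$: given a test function $g$ at $y_0$ I replace it by the strict test function $g_{\epsilon}(y):=g(y)+\epsilon\|y-y_0\|^2$ and set
$$\tilde{g}_C(x,y) := g_{\epsilon}(y) + C\|x-x_0\|^2.$$
Let $\alpha_C:=\sup(f-\tilde{g}_C)$ over a small closed ball around $(x_0,y_0)$, attained at some $(x_C,y_C)$ by upper semi-continuity. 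Then $\tilde{g}_C+\alpha_C$ is a genuine test function for $f$ at $(x_C,y_C)$, so by (1) its Hessian at that point lies in $F\star G$; in particular $\pi(\Hess\tilde{g}_C(x_C,y_C)) = \Hess g(y_C)+2\epsilon I$ lies in $G$. Letting $C\to\infty$ and then $\epsilon\to 0$, and using closedness of $G$, yields $\Hess g(y_0)\in G$. The $F$-subharmonicity of $x\mapsto f(x,y_0+\Gamma x)$ is handled symmetrically using the penalty $C\|y-y_0-\Gamma(x-x_0)\|^2$; a short direct computation shows that $i_{\Gamma}^*$ applied to the resulting Hessian is exactly $\Hess g_{\epsilon}(x_C)$, which must lie in $F$ and in the limit gives $\Hess g(x_0)\in F$.

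The main obstacle is showing that $(x_C,y_C)\to (x_0,y_0)$ as $C\to\infty$; this is where the strict perturbation is essential. If $(x_C,y_C)$ accumulated off the penalized affine subspace then the term $C\|\cdot\|^2$ would force $\alpha_C\to -\infty$, contradicting $\alpha_C\ge 0$; and if it accumulated on that subspace but away from the base point, the strict inequality $g_{\epsilon}>f(x_0,\cdot)$ off $y_0$ (respectively $g_\epsilon > f(\cdot,y_0+\Gamma(\cdot-x_0))$ off $x_0$) would force the limit of $\alpha_C$ to be strictly negative. Once this convergence is in hand, continuity of $\Hess g$ together with closedness of $F$ and $G$ closes out the argument.
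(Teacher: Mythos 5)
Your proof is correct, and it supplies something the paper itself does not: the paper's entire ``proof'' of Proposition \ref{prop:slices} is the citation to \cite[Proposition 3.12]{ross_nystrom_minimum}, so your argument makes the statement self-contained. The direction $(2)\Rightarrow(1)$ is exactly the routine computation you describe: for a test function $\tilde g$ at $(x_0,y_0)$ the vertical restriction gives $\pi(\Hess \tilde g(x_0,y_0))\in G$, and since the slicing map is affine the chain rule gives $\Hess_{x_0}\bigl(x\mapsto\tilde g(x,y_0+\Gamma(x-x_0))\bigr)=i_\Gamma^*\Hess\tilde g(x_0,y_0)\in F$, which is the definition of membership in $F\star G$. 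For $(1)\Rightarrow(2)$ your penalization is the standard viscosity device and the two points it hinges on both check out: (i) since $\alpha_C\ge 0$ and $f$ is bounded above on the compact ball, the penalty forces the maximizers toward the sliced affine set, and the $\epsilon$-strictification together with upper semicontinuity of $f$ forces any limit point to be the base point itself, so for large $C$ the maximum is interior, $f(x_C,y_C)$ is finite, and $\tilde g_C+\alpha_C$ is a genuine test function; (ii) the penalty $C\|y-\bar y_0-\Gamma(x-x_0)\|^2$ is constant along every affine graph of slope $\Gamma$, so its (constant) Hessian is annihilated by $i_\Gamma^*$ and indeed $i_\Gamma^*\Hess\tilde g_C(x_C,y_C)=\Hess g(x_C)+2\epsilon I$ (likewise $\pi(\Hess\tilde g_C)=\Hess g(y_C)+2\epsilon I$ in the vertical case). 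Closedness of $F$ and $G$ then handles the limits $C\to\infty$ and $\epsilon\to 0$.

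One caveat, which concerns the printed statement rather than your argument. In $(2)$ the offsets are quantified over $y_0\in Y$, whereas your easy direction applies $(2)$ to the slice through $(x_0,y_0)$, whose offset $y_0-\Gamma x_0$ need not lie in $Y$. The proposition should be read, as in the introduction (and as your hard direction actually proves), with the slices taken along \emph{all} affine maps $x\mapsto y_0+\Gamma x$, each on its natural domain $\{x\in X: y_0+\Gamma x\in Y\}$. With the literal quantifier the equivalence can genuinely fail: for instance $f(x,y)=2x\log x-2x-xy$ on $(0,1)\times(10,11)$ with $F=G=\mathcal P$ is convex along every vertical slice and along every slice whose offset lies in $(10,11)$, yet its Hessian has $f_{yy}=0$, $f_{xy}=-1$, so $f$ is not $\mathcal P\star\mathcal P$-subharmonic. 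With the corrected reading of $(2)$ your proof is complete as written.
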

\begin{proof}
See \cite[Proposition 3.12]{ross_nystrom_minimum}.
\end{proof}

\begin{remark}[Products of Subequations that depend also on the gradient part]\label{rmk:productswithgradient}
It is possible also to consider products of subequations that depend on the entire jet rather than just the Hessian part (the authors previously considered this in \cite{ross_nystrom_minimum} where we use the notation $F\#G$ instead of $F\star G$.). 

To describe this, given $$p = \begin{pmatrix} p_1 \\ p_2 \end{pmatrix} \in \mathbb R^{m+n}$$ and for $\Gamma \in M_{m\times n}$ set
$$i_{\Gamma}^* p = p_1 + \Gamma^T p_2.$$
Then for $F\subset J^2_n = \mathbb R \oplus \mathbb R^n \oplus \Sym^2(\mathbb R^n)$ we defined
$$F\# \mathcal P = \{ (r,p,A) \in \mathbb R\times \mathbb R^{n} \times \Sym^2(\mathbb R^{n+m}): \pi(A) \in \mathcal P \text{ and }(r,i_{\Gamma}^* p , i_{\Gamma}^* A) \in F \text{ for all } \Gamma\}.$$
\end{remark}


\subsection{Characterization of Products}

When $G =\mathcal P$ and $F$ is convex we have a more concrete characterization of $F\star P$.   To discuss this, observe first that any affine subspace in $\Sym^2(\mathbb R^n)$ is given by the kernel of a function of the form $\phi(B) = \tr(UB)-c$ for some non-zero $U\in \Sym^2(\mathbb R^n)$ and $c\in \mathbb R$.     We say that an affine function $\phi:\Sym^2(\mathbb R^n)\to \mathbb R$ is a \emph{supporting hyperplane} of $F$ if $\phi\ge 0$ on $F$ and there is some point $A\in F$ with $\phi(A)=0$. 

\begin{definition}
Let $F\subset \Sym^2(\mathbb R^n)$ be a closed and convex.   The \emph{null space} of $F$ denoted by either $\nul(F)$ or $\nul(F)_{m\times n}$ is defined to be
 $$\nul(F):= \{ C\in M_{m\times n}: UC =0 \,\, \forall \text{ supporting hyperplanes } B\mapsto \tr(UB) -c \text{ of }F \}.$$
 \end{definition}
 
In many cases $\nul(F)$ is trivial, for instance we have the following. 

\begin{lemma}
Suppose that $F\subset \Sym^2(\mathbb R^n)$ is a convex cone Dirichlet set and that $\{ B\in F: \tr(B)=1\}$ is compact.  Then $\nul(F)=\{0\}$. 
\end{lemma}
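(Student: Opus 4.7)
The plan is to exhibit one specific supporting hyperplane of $F$, namely $\phi(B) = \tr(B)$ (corresponding to $U = I_n$ and $c = 0$). Once this is in hand, the defining condition for $C \in \nul(F)$ applied to this $U$ reads $I_n C = 0$, which immediately forces $C = 0$.

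Reducing the problem, I first note that $F$ being a cone over $0$ gives $0 \in F$, and combined with the Dirichlet property $F + \mathcal{P} \subset F$ this yields $\mathcal{P} \subset F$. Since $\tr(0) = 0$ with $0 \in F$, to conclude that $\phi(B) = \tr(B)$ is a supporting hyperplane it suffices to verify $\tr(B) \ge 0$ for every $B \in F$.

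For this key step I will argue by contradiction. Compactness of $\{B \in F : \tr(B) = 1\}$ supplies an $M > 0$ with $\|B\| \le M$ throughout this base. If some non-zero $B_0 \in F$ had $\tr(B_0) \le 0$, then for each $t \ge 0$ the element $B_t := tB_0 + s_t I$ with $s_t := (1 - t\tr(B_0))/n \ge 1/n$ would lie in $F$ (using the cone property, $\mathcal{P} \subset F$, and $F + \mathcal{P} \subset F$) and have trace $1$. A direct Frobenius-norm computation, using $\|I\|^2 = n$ and $\tr(B_0 I) = \tr(B_0)$, yields
\[
\|B_t\|^2 \;=\; t^2\bigl(\|B_0\|^2 - \tfrac{1}{n}\tr(B_0)^2\bigr) + \tfrac{1}{n}.
\]
The Cauchy--Schwarz inequality applied to the Frobenius inner product $\langle B_0, I\rangle = \tr(B_0)$ shows the $t^2$-coefficient is non-negative, vanishing precisely when $B_0$ is a scalar multiple of $I$. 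Unless $B_0$ is such a multiple, sending $t \to \infty$ contradicts $\|B_t\| \le M$.

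The main obstacle is therefore the remaining case $B_0 = cI$ with $c < 0$, where the norm estimate degenerates. Here I would instead exploit properness of $F$ directly: the cone property then gives $-\lambda I \in F$ for every $\lambda \ge 0$, and combining this with $F + \mathcal{P} \subset F$ and the elementary observation that every $A \in \Sym^2(\mathbb{R}^n)$ satisfies $A + \lambda I \in \mathcal{P}$ for $\lambda$ large enough, one can write $A = -\lambda I + (A + \lambda I) \in F$. This would give $F = \Sym^2(\mathbb{R}^n)$, contradicting that a Dirichlet set is proper, and closing the argument.
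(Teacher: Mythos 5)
Your proof is correct, but it takes a genuinely different route from the paper's. The paper argues by contradiction starting from a hypothetical non-zero $C\in\nul(F)$: it writes the closed convex cone $F$ as an intersection of homogeneous half-spaces $\{B:\tr(UB)\ge 0\}$, uses $UC=0$ to produce an explicit line $\lambda\mapsto \lambda CE+CC^T$ of trace-one elements lying in $F$, and contradicts compactness of the slice $\{B\in F:\tr(B)=1\}$. You instead never touch a putative null direction: you show that the single functional $B\mapsto\tr(B)$ is itself a supporting hyperplane of $F$ (nonnegativity of the trace on $F$ being forced by compactness of the slice via your $B_t=tB_0+s_tI$ construction, with the degenerate direction $B_0=cI$, $c<0$, ruled out by properness, and the touching point supplied by $0\in F$), after which $U=I$ in the defining condition of $\nul(F)$ kills $C$ immediately. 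What each approach buys: yours avoids the half-space decomposition and the auxiliary matrix $E$ (and with it the symmetry/dimension bookkeeping that the paper's sketch glosses over, as well as the separate treatment of $n=1$), and it isolates exactly what compactness of the slice contributes, namely that $F$ contains no matrix of negative trace; the paper's argument, on the other hand, makes the geometric content of $\nul(F)$ visible by exhibiting directly how a nontrivial null direction forces the trace-one slice to be unbounded, and it works only with supporting hyperplanes through the origin, which is the form in which these hyperplanes reappear in the subsequent characterization results. Both proofs use the compactness hypothesis in essentially dual ways, so your argument is a legitimate, and in places tighter, alternative.
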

\begin{proof}
The statement is trivial when $n=1$, so assume $n\ge 2$.  Since $F$ is a cone over $0$ it can be written as an intersection of subspaces 
$$F  = \bigcap_{U\in \mathcal U} \{ B : tr(B U)\ge 0\}.$$
Suppose for contradiction there is a non-zero $C\in \nul(F)$, which by rescaling we may assume $\tr(CC^T)=1$.  Then for any $U\in \mathcal U$ we have $UC=0$.  Fix a  $E$ with $CE\neq 0$ but $\tr(CE)=0$ and consider $B = \lambda CE  + CC^T$ for $\lambda\in \mathbb R$.    Then $UB=0$ so $B\in F$ and moreover $\tr(B)=1$.  But this contradicts compactness of $\{ B\in F : \tr(B)=1\}$.
\end{proof}

\begin{example} If $F=\mathcal P$ or if $F = F_{sub}$ then $\nul(F) = \{0\}$.  However there are examples where $\nul(F)$ is non-trivial (for instance consider the Dirichlet set consisting of symmetric matrices in block form $\left(\begin{array}{cc} B & C \\ C^t & D \end{array}\right)$ with $\tr(B)\ge 0$)
\end{example}

\begin{proposition}\label{prop:characterization1}
Suppose $F\subset \Sym^2(\mathbb R^n)$ is a convex Dirichlet set.  Consider the block matrix
$$ A := \left(\begin{array}{cc} B & C \\ C^t & D \end{array}\right)\in \Sym^2(\mathbb R^{n+1})$$
with $D\in \mathbb R$.  Then $A\in F\star \mathcal P$ if and only if either
\begin{enumerate}
\item $D> 0$ and $B - C D^{-1} C^t \in F$ or
\item $D=0$ and $C\in \nul(F)$ and $B \in F$.
\end{enumerate}
\end{proposition}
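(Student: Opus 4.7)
The plan is to analyze the defining condition $i_\Gamma^* A \in F$ for all $\Gamma\in M_{1\times n}$ directly, by a completion-of-the-square argument when $D>0$ and by a supporting-hyperplane argument when $D=0$.  The previous lemma (applied with $G=\mathcal P$, using convexity of $F$) already forces $\pi(A)=D\in\mathcal P$, i.e.\ $D\ge 0$ in the scalar setting, so these two cases are exhaustive.

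For the case $D>0$, I would parametrise $\Gamma = \gamma^T$ for $\gamma\in\mathbb R^n$, so that \eqref{eq:defistargamma} becomes
\[
i_\Gamma^* A \;=\; B + C\gamma^T + \gamma C^T + D\,\gamma\gamma^T.
\]
Completing the square about $\gamma_0 := -D^{-1}C$ rewrites this as
\[
i_\Gamma^* A \;=\; \bigl(B - D^{-1} C C^T\bigr) + D\,(\gamma-\gamma_0)(\gamma-\gamma_0)^T.
\]
The second summand lies in $\mathcal P$, so the Dirichlet property $F+\mathcal P\subseteq F$ shows that if $B - D^{-1}CC^T\in F$ then $i_\Gamma^* A\in F$ for every $\gamma$; conversely, specialising to $\gamma=\gamma_0$ extracts exactly the condition $B-D^{-1}CC^T\in F$.

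For the case $D=0$ one has $i_\Gamma^* A = B + C\gamma^T + \gamma C^T$, and a direct $\tr$-calculation (using symmetry of $U$) gives, for any affine $\phi(X)=\tr(UX)-c$,
\[
\phi(i_\Gamma^* A) \;=\; \phi(B) + 2\langle UC,\,\gamma\rangle.
\]
Since $F$ is closed and convex and proper, it equals the intersection of the closed half-spaces $\{\phi\ge 0\}$ as $\phi$ ranges over its supporting hyperplanes.  Consequently $A\in F\star\mathcal P$ if and only if, for every such $\phi$ and every $\gamma\in\mathbb R^n$, $\phi(B) + 2\langle UC,\gamma\rangle\ge 0$.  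The function on the right being affine in the free parameter $\gamma$, this uniform bound holds precisely when the linear part vanishes ($UC=0$) and the constant part is non-negative ($\phi(B)\ge 0$).  Quantifying over all supporting $\phi$ yields exactly $C\in\nul(F)$ and $B\in F$, which is (2).

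The main subtlety I expect is the step representing $F$ as the intersection of the half-spaces coming from its \emph{supporting} hyperplanes; one must use supporting rather than merely separating hyperplanes so that the resulting quantifier over $\phi$ matches the definition of $\nul(F)$.  Beyond that, all computations are routine manipulations of trace and block matrices.
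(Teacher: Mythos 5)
Your proof is correct and for the $D>0$ case takes a genuinely different, cleaner route than the paper. The paper first reduces $F$ to a single half-space via Lemma~\ref{lem:halfspaces}, then proves Lemma~\ref{lemma:quadraticnonsense} by a somewhat involved case analysis (splitting on whether $\phi(\Gamma^TD\Gamma)$ vanishes, locating critical points, etc.), and this single lemma handles both $D>0$ and $D=0$. Your $D>0$ argument instead performs a direct matrix completion of the square,
$$i_\Gamma^* A \;=\; \bigl(B-CD^{-1}C^T\bigr)\;+\;D\,(\gamma+D^{-1}C)(\gamma+D^{-1}C)^T,$$
so the forward implication is just $F+\mathcal P\subset F$ applied to the rank-one positive summand, and the converse is obtained by evaluating at $\gamma=-D^{-1}C$. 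This bypasses the half-space decomposition entirely and, notably, only uses the Dirichlet positivity property of $F$ rather than its convexity; it also makes the appearance of the Schur complement $B-CD^{-1}C^T$ transparent. Your $D=0$ case is structurally the same as the paper's: reduce to supporting half-spaces (convexity and closedness of $F$, plus the fact that a Dirichlet set has nonempty interior, guarantee that $F$ is exactly the intersection of its supporting half-spaces), compute $\phi(i_\Gamma^*A)=\phi(B)+2\langle UC,\gamma\rangle$, and observe an affine function of $\gamma$ is bounded below precisely when its linear coefficient vanishes; quantifying over all supporting $\phi$ gives $C\in\nul(F)$ and $B\in F$. One small remark: you invoke the preceding lemma (that $i_\Gamma^*A\in F$ for all $\Gamma$ forces $\pi(A)\in\mathcal P$) to dispose of $D<0$, but this is also immediate from the original definition of $F\star\mathcal P$, which requires $\pi(A)\in\mathcal P$ outright. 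Either way the argument is complete.
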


The proof will be by reducing to the case that $F$ is a half-space.

\begin{lemma}\label{lemma:quadraticnonsense}
Let $U\in \mathcal P$ be non-zero, let $c\in \mathbb R$ and set
$$\phi(B) = \tr(UB) \text{ for } B\in \Sym^2(\mathbb R^n)$$
Fix $B\in \Sym^2(\mathbb R^n)$, $C\in \mathbb R^n$ and $D\in \mathbb R$.  Then
\begin{equation}\label{eq:lowerboundphi} \phi( B + C \Gamma + \Gamma^T C + \Gamma^T D \Gamma) \ge c\text{ for all } \Gamma\in \mathbb R^n\end{equation}
if and only if either
\begin{enumerate}
\item $D>0$ and $\phi(B - C D^{-1} C^T)\ge c$ or
\item $D=0$ and $UC=0$ and $\phi(B)\ge c.$
\end{enumerate}
\end{lemma}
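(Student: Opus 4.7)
The plan is to rewrite \eqref{eq:lowerboundphi} as a scalar quadratic inequality in $\Gamma$ and then analyze its infimum by case-splitting on the sign of $D$; completing the square does the bulk of the work. First I would expand by linearity of $\phi = \tr(U\,\cdot\,)$ and cyclicity of the trace, using the identities $\tr(UC\Gamma) = C^T U \Gamma$ and $\tr(U\Gamma^T\Gamma) = \Gamma^T U \Gamma$ (together with the symmetric counterpart for the $\Gamma^T C$ term) to reduce the left hand side of \eqref{eq:lowerboundphi} to the scalar quadratic
$$ q(\Gamma) := \phi(B) + 2\, C^T U \Gamma + D\, \Gamma^T U \Gamma.$$
The condition \eqref{eq:lowerboundphi} then reads $q(\Gamma) \ge c$ for all $\Gamma \in \mathbb{R}^n$, and the analysis proceeds by the sign of $D$.

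If $D > 0$, the Hessian $2DU$ of $q$ is positive semidefinite, so any critical point is a global minimum. The equation $\nabla q = 2DU\Gamma + 2UC = 0$ is solved by $\Gamma_0 := -C/D$; a direct computation using $C^T U C = \phi(CC^T)$ yields
$$ q(\Gamma_0) = \phi(B) - D^{-1} \phi(CC^T) = \phi(B - D^{-1} C C^T),$$
so $q \ge c$ on all of $\mathbb{R}^n$ is equivalent to $\phi(B - D^{-1} C C^T) \ge c$, which is case (1). When $U$ is only semidefinite (rather than strictly positive definite) one should note that $q$ is constant along $\ker U$ and strictly convex on its orthogonal complement, so $\Gamma_0$ still realises the infimum.

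If $D = 0$, then $q$ is affine in $\Gamma$ and bounded below if and only if $UC = 0$, in which case $q \equiv \phi(B)$, giving case (2). If $D < 0$, since $U \in \mathcal{P}$ is non-zero one can pick $v$ with $v^T U v > 0$, and along the ray $\Gamma = tv$ one computes $q(tv) = Dt^2 v^T U v + O(t) \to -\infty$ as $t \to \infty$, so \eqref{eq:lowerboundphi} fails and this case is excluded. I do not anticipate any real obstacle here: the argument is a routine analysis of a scalar quadratic form, and the only mild care required is in the expansion bookkeeping and in noting the identity $C^T U C = \phi(CC^T)$ used to identify the critical value of $q$.
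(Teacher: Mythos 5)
Your proof is correct and follows essentially the same route as the paper: reduce \eqref{eq:lowerboundphi} to a quadratic in $\Gamma$, locate its infimum at $\Gamma_0=-D^{-1}C^T$ with value $\phi(B-CD^{-1}C^T)$ when $D>0$, and in the case $D=0$ observe that the affine function is bounded below only when $UC=0$. Your execution is slightly more streamlined than the paper's — by passing to the scalar quadratic $\phi(B)+2C^TU\Gamma+D\,\Gamma^TU\Gamma$ and invoking convexity you avoid the factorization $U=V^TV$ and the sub-case on the vanishing of $\phi(\Gamma^TD\Gamma)$, and you explicitly rule out $D<0$, which the paper leaves implicit.
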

\begin{proof} 
Since $U\in \mathcal P$ is non-zero we can write $U = V^T V$ for some non-zero  $V\in \mathcal P$.  We will use throughout that if $B\in \mathcal P$ then $\phi(B) = \tr(UB) \ge 0$.

Assume now that conditions (1) and (2) hold and set
$$f(\Gamma): =  \phi( B + C \Gamma + \Gamma^T C^T + \Gamma^t D \Gamma).$$

\noindent {\bf Case 1: $D>0$ } Suppose first that $V$ is such that $\phi(\Gamma^T D \Gamma)=0$.  Using that $D>0$ this implies  $0=\tr( V^TV \Gamma^T\Gamma)=\tr( (V\Gamma^T)^T V\Gamma^T)$ and so $V\Gamma^T=0$.  Thus we in fact have
$$\phi(C\Gamma + \Gamma^T C^T) = \tr( V^TV (C\Gamma + \Gamma^T C^T)) =\tr (V C\Gamma V^T) + \tr(V \Gamma^T C^T V^T) =0.$$
So
$$f(\Gamma) = \phi(B) \ge \phi(B - C D^{-1} C^T) \ge c$$
where the last inequality uses (1).

Suppose next that $V$ is such that $\phi(\Gamma^T D \Gamma)\neq 0$.  Then in fact  $\phi(\Gamma^T D \Gamma)>0$ since $\Gamma^T D \Gamma\in \mathcal P$.   Thus $f(\Gamma)\to \infty$ as $\|\Gamma\|\to \infty$.  So the minimum value of $f(\Gamma)$ is attained at a critical point.  Any such critical point $\Gamma_0$ satisfies for all $\Delta$ that
 \begin{equation} 0 = Df|_{\Gamma_0}(\Delta) = \phi(C \Delta+ \Delta^T C^T + \Delta^T D \Gamma_0 + \Gamma_0^T D\Delta).\label{eq:derivativevanishes}\end{equation}
 In particular this will hold when $\Delta = D^{-1}C^T + \Gamma_0$.  After some manipulation and cancellation this implies
 
 $$ 0 = \phi( C D^{-1} C^T  + C\Gamma_0 + \Gamma_0^t C^T + \Gamma_0^TD \Gamma_0)$$
 which in turn gives
 $$f(\Gamma_0) =  \tr( B + C\Gamma_0 + \Gamma_0^t C^T + \Gamma_0^T D \Gamma_0) = tr(B - C D^{-1} C^T) \ge c$$
 where the last inequality uses (1).  Thus we have $f(\Gamma)\ge c$ for all $\Gamma$.  \medskip
 
\noindent {\bf  Case 2:  $D=0$ }  Again fix $\Gamma$ and consider
$$f(\Gamma):= \phi( B +C \Gamma + \Gamma^T C^T + \Gamma^t D \Gamma) = \phi( B ) + \phi(C \Gamma + \Gamma^t C^T).$$
Since we are assuming (2) holds we have $V^T V C=UC=0$.  Thus
$$ \phi(C \Gamma + \Gamma^T C^T) = tr(V^T V(C \Gamma + \Gamma^T C^T) = \tr( V^TV C \Gamma) + \tr(\Gamma^TC^T V^T V)=0.$$  Thus in fact
$$f(\Gamma) = \phi(B)\ge c$$
where the last inequality uses the last part of (2).  \medskip

For the converse suppose that \eqref{eq:lowerboundphi} holds.  If $D>0$ then this in particular holds for $\Gamma_0 = -D^{-1}C^T$ giving $\phi(B - C^T D^{-1} C)= f(\Gamma_0) \ge c$.  Suppose then $D=0$.  Applying this with $\Gamma=0$ gives $\phi(B)\ge 0$.  On the other hand for any $\Gamma$ the fact that $D=0$ means we have $\phi(B + C\Gamma + \Gamma^t C^t)=0$ and since $\Gamma$ can be scaled arbitrarily this implies $\phi(C\Gamma + \Gamma^T C)=0$.  In particular this applies when $\Gamma = C^T$ and so $0 = \phi( CC^T) = \phi( V^T V C C^T)$ and so $VC=0$ and thus $UC = V^TVC=0$ as well.  
\end{proof}

\begin{lemma}\label{lem:halfspaces}
Let $F\subset \Sym^2(\mathbb R^n)$ is closed convex and $F+\mathcal P\subset F$.  Then $F$ can be written as an intersection of half spaces given by supporting hyperplanes of the form
$$ \{ B : \tr(VV^TB)\ge c\}$$
where each $V\in \mathcal P$.
\end{lemma}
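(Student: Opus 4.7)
The plan is to combine the standard supporting hyperplane theorem for closed convex sets in finite dimensions with the self-duality of the positive semidefinite cone $\mathcal P$ under the trace inner product on $\Sym^2(\mathbb R^n)$.

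First I would invoke the elementary fact (a consequence of the finite-dimensional Hahn--Banach theorem) that any closed convex subset $F$ of the Euclidean space $\Sym^2(\mathbb R^n)$ equals the intersection of all closed half-spaces containing it, and that these half-spaces can be chosen so that each of their boundary hyperplanes is a supporting hyperplane of $F$. Using the trace inner product to identify $\Sym^2(\mathbb R^n)$ with its dual, every such supporting hyperplane has the form $\{B : \tr(UB) = c\}$ for some non-zero $U \in \Sym^2(\mathbb R^n)$ and $c \in \mathbb R$, with $\tr(UB)\ge c$ on $F$ and equality attained at some $A \in F$. Thus
$$F = \bigcap_{\alpha} \{B \in \Sym^2(\mathbb R^n) : \tr(U_\alpha B) \ge c_\alpha\}$$
for some collection of such supporting data $(U_\alpha,c_\alpha)$.

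The key step is to show that under the hypothesis $F + \mathcal P \subset F$, each normal $U = U_\alpha$ is forced to lie in $\mathcal P$. Fix a supporting hyperplane with normal $U$ and let $A \in F$ be a point where $\tr(UA) = c$. For any $P \in \mathcal P$ and any $t \ge 0$ we have $A + tP \in F$ by hypothesis, hence
$$c \le \tr\bigl(U(A + tP)\bigr) = c + t\,\tr(UP).$$
Letting $t \to \infty$ forces $\tr(UP) \ge 0$ for all $P \in \mathcal P$. Since the cone $\mathcal P$ is self-dual with respect to the trace pairing, this yields $U \in \mathcal P$.

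Finally, any $U \in \mathcal P$ admits a unique positive semidefinite square root $V := U^{1/2} \in \mathcal P$, and since $V$ is symmetric we have $VV^T = V^2 = U$. Substituting back gives the desired representation of $F$ as an intersection of supporting half-spaces of the form $\{B : \tr(VV^T B) \ge c\}$ with $V \in \mathcal P$. There is no real obstacle here beyond assembling these standard ingredients; the only place any care is needed is the passage from the non-negativity of $\tr(U\,\cdot)$ on $\mathcal P$ to $U \in \mathcal P$, which is just self-duality of the positive semidefinite cone.
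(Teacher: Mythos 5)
Your proof is correct and follows essentially the same route as the paper: write $F$ as an intersection of supporting half-spaces $\{B : \tr(UB)\ge c\}$, use the cone condition $F+\mathcal P\subset F$ to deduce $\tr(UP)\ge 0$ for all $P\in\mathcal P$, invoke self-duality of $\mathcal P$ to get $U\in\mathcal P$, and then take a square root. The only cosmetic difference is that you argue from the contact point $A$ on the supporting hyperplane while the paper uses an arbitrary $B_0\in F$; both yield the same inequality.
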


\begin{proof}
As $F$ is closed and convex we know that it can be written as an intersection of half spaces given by supporting hyperplanes, each of which can be written as $$ \{ B : \tr(UB)\ge c\}$$ for some $U\in \Sym^2(\mathbb R^n)$.

Now we must have $\tr(UB)\ge 0$ for all $B\in \mathcal P$.  For if $B_0\in F$ is fixed and $B\in \mathcal P$ then $B_0 + \lambda B\in F$ for all $\lambda \ge 0$ so $tr(U B_0) +\lambda \tr(UB)\ge c$ for all $\lambda \ge 0$ which implies $\tr(UB)\ge 0$.   Thus $U$ lies in the dual cone to $\mathcal P$, and $\mathcal P$ is self-dual, so in fact we have $U\in \mathcal P$. Thus we can write $U= V^TV$ for some $V\in \mathcal P$ and the proof is complete.
\end{proof}

\begin{proof}[Proof of Proposition \ref{prop:characterization1}]
 By Lemma \ref{lem:halfspaces} we can write $F$ as an intersection of half spaces of the form
$$ F_{\alpha}  = \{ B : \tr(U_\alpha B) \ge c_{\alpha}\}$$
for some non-zero $U_\alpha\in \mathcal P$ and $c_{\alpha}\in \mathbb R$.  It is easy to check then that $F\star \mathcal P = \cap_{\alpha}( F_{\alpha} \star P)$ and that $\nul(F) = \cap_{\alpha}\nul(F_{\alpha})$.  Thus we may assume from now on that $F$ is given by
$$ F = \{ B : \tr(UB)\ge c\}$$
where $U\in \mathcal P$ is non-zero and $c\in \mathbb R$.  And this case is covered precisely by Lemma \ref{lemma:quadraticnonsense}.
\end{proof}

For higher $m$ we get a similar statement.  

\begin{proposition}\label{prop:characterization2}
Suppose $F\subset\Sym^2(\mathbb R^n)$ is closed and convex and $F+\mathcal P\subset F$.  Consider the block matrix
$$ A := \left(\begin{array}{cc} B & C \\ C^T & D \end{array}\right)\in \Sym^2(\mathbb R^{n+m})$$
Then $A\in F\star \mathcal P$ if and only if
$$D\in P \text{ and }  C(I-D^+D) \in\nul(F)\text{ and }B - C D^{+} C^t \in F$$
where $D^+$ denotes a pseudo-inverse of $D$.
\end{proposition}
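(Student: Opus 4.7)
The plan is to model the proof on Proposition~\ref{prop:characterization1} (which handles the $m=1$ case) by reducing to half-spaces and then completing the square using the Moore-Penrose pseudoinverse $D^+$. First I would apply Lemma~\ref{lem:halfspaces} to write $F = \bigcap_\alpha F_\alpha$ as an intersection of half-spaces $F_\alpha = \{B : \tr(U_\alpha B) \ge c_\alpha\}$ with $U_\alpha = V_\alpha^T V_\alpha$, $V_\alpha \in \mathcal P$ nonzero. Both $F \star \mathcal P = \bigcap_\alpha (F_\alpha \star \mathcal P)$ and $\nul(F) = \bigcap_\alpha \nul(F_\alpha)$ respect this decomposition, so it suffices to treat a single half-space $F = \{B : \tr(UB) \ge c\}$. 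The necessity of $D \in \mathcal P$ is supplied directly by the unnumbered lemma preceding Proposition~\ref{prop:characterization1}, so I take this as given throughout.

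The computational heart is the square-completion identity. Setting $\Gamma_0 := -D^+ C^T$ and substituting $\Gamma = \Gamma_0 + \Sigma$ into $i_\Gamma^* A = B + C\Gamma + \Gamma^T C^T + \Gamma^T D \Gamma$, a direct expansion using the Moore-Penrose relations $DD^+D = D$, $D^+DD^+ = D^+$, and the symmetry $DD^+ = D^+D$ (valid for symmetric $D$) yields
$$i_\Gamma^* A = (B - CD^+ C^T) + C(I - D^+D)\Sigma + \Sigma^T (I - D^+D)^T C^T + \Sigma^T D \Sigma.$$
Applying $\tr(U \cdot)$ and using cyclicity then produces a quadratic polynomial in $\Sigma$ with constant term $\tr(U(B - CD^+ C^T))$, linear term $2\tr(UC(I - D^+D)\Sigma)$, and nonnegative quadratic term $\tr(U \Sigma^T D \Sigma) = \tr((V\Sigma^T) D (V\Sigma^T)^T) \ge 0$.

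The equivalence now unwinds cleanly. For $(\Leftarrow)$, if $D \in \mathcal P$, $UC(I - D^+D) = 0$, and $\tr(U(B - CD^+C^T)) \ge c$, then the linear term vanishes identically, the quadratic term is nonnegative, and $\tr(U i_\Gamma^* A) \ge c$ for every $\Gamma$. For $(\Rightarrow)$, I would first restrict $\Sigma$ to matrices whose columns lie in $\ker(D)$: this kills the quadratic term and reduces the expression to an affine function of $\Sigma$, which is bounded below only if its slope vanishes; this forces the rows of $UC$ into $\ker(D)^\perp = \operatorname{range}(D)$, equivalently $UC(I - D^+D) = 0$. Then evaluating at $\Sigma = 0$ (i.e.\ $\Gamma = \Gamma_0$) gives $\tr(U(B - CD^+C^T)) \ge c$. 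Intersecting over all supporting half-spaces, these translate by definition into $C(I - D^+D) \in \nul(F)$ and $B - CD^+C^T \in F$ respectively. The main obstacle is the algebraic bookkeeping in the square-completion step: one must invoke $DD^+ = D^+D$ to see that the two cross terms combine symmetrically under $\tr(U \cdot)$, and track the pseudoinverse identities carefully to see that $C\Gamma_0 + \Gamma_0^T C^T + \Gamma_0^T D \Gamma_0$ collapses to the single term $-CD^+ C^T$.
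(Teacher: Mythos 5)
Your proposal is correct, and it shares the paper's outer strategy (reduce to supporting half-spaces via Lemma \ref{lem:halfspaces}, using $F\star \mathcal P = \bigcap_\alpha (F_\alpha\star\mathcal P)$ and $\nul(F)=\bigcap_\alpha \nul(F_\alpha)$, with $D=\pi(A)\in\mathcal P$ supplied separately), but the computational core is genuinely different. The paper handles a single half-space $\phi(B)=\tr(UB)$ by rerunning Lemma \ref{lemma:quadraticnonsense}: it splits into the cases $\phi(\Gamma^T D\Gamma)>0$ and $\phi(\Gamma^T D\Gamma)=0$, in the first case minimizing $f(\Gamma)=\phi(i_\Gamma^*A)$ at a critical point and plugging $\Delta = D^+C^T+\Gamma_0$ into the critical-point equation \eqref{eq:derivativevanishes}, and in the second case diagonalizing $D$ spectrally to show the linear terms die; moreover the paper only sketches the sufficiency direction in detail. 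You instead complete the square once and for all via the substitution $\Gamma=-D^+C^T+\Sigma$, which exhibits $\tr(U\,i_\Gamma^*A)$ as an explicit quadratic in $\Sigma$ whose constant, linear and quadratic parts encode exactly the three conditions $\phi(B-CD^+C^T)\ge c$, $UC(I-D^+D)=0$ and $D\succeq 0$. This buys a uniform treatment of both implications (the converse comes from restricting $\Sigma$ to have columns in $\ker D$ and noting an affine function bounded below has zero slope, then setting $\Sigma=0$), avoids the existence-of-minimizer and coercivity discussion, and in the forward direction even yields $UC(I-D^+D)=0$ for every supporting hyperplane directly, without $U\succeq 0$; the mild cost is that you must invoke the Moore--Penrose identities $D^+DD^+=D^+$ and $DD^+=D^+D$ (fine, since $D$ is symmetric and you fix the Moore--Penrose inverse, consistent with the paper's use of the spectral pseudo-inverse). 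The paper's route, by contrast, stays closer to the template already set up in Lemma \ref{lemma:quadraticnonsense}, so it can be presented as "the same proof with minor changes."
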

\begin{proof}
The proof is essentially the same as the above and we sketch the essential changes.    In fact a look at the proof of Proposition \ref{prop:characterization1} shows that to prove this case it is sufficient to show that Lemma \eqref{lemma:quadraticnonsense} continues to hold with conditions (1) and (2) of the statement replaced with the condition that $D\in \mathcal P$ and $UC(I-D^+D)=0$ and $\phi(B - C D^{+} C^t )\ge c$.  

Assuming that condition holds, and with the notation as in the start of the proof of that Lemma,  we consider two cases depending on the vanishing of $\phi(\Gamma^T D \Gamma)$.  If  $\phi(\Gamma^T D \Gamma)>0$ then exactly the same argument holds shows that $f$ is minimized at a critical point $\Gamma_0$ that must satisfy \eqref{eq:derivativevanishes} for all $\Delta$ and now putting $\Delta = D^+ C^T + \Gamma_0$ gives $0 = \phi(CD^+C^T + C(I + D^+D)\Gamma_0 + \Gamma_0^t(I + DD^+)C^T + 2\Gamma_0^T D\Gamma_0)$.  Using the assumed condition this becomes $\phi(C D^+ C^T + C\Gamma_0 + \Gamma_0^T C^T + \Gamma_0^T D\Gamma_0)=0$ and then after some manipulation $f(\Gamma_0) = \phi(B - CD^{+} C^T)\ge c$.     

If  $\phi(\Gamma^t D \Gamma)=0$ then we write $D = E \begin{pmatrix} \Sigma_r & 0 \\ 0 & 0 \end{pmatrix} E^T$ where $\Sigma_r$ is strictly positive diagonal and $E$ is orthogonal, so $D^+ D = E \begin{pmatrix} I & 0 \\ 0& 0 \end{pmatrix} E^t$.  Writing $E^T \Gamma = \begin{pmatrix} R \\ S \end{pmatrix}$ the condition $\phi(\Gamma^T D \Gamma)=0$ implies, after some manipulation, that $RV^t=0$.   From this another manipulation gives $\phi(C\Gamma + \Gamma^t C)=0$ and so $f(\Gamma) = \phi(B)\ge c$.

\end{proof}

\section{Prekopa Theorem: The Smooth Case}\label{sec:prekopasmooth}

Let $X\subset\mathbb R^n$ be open and suppose $\Omega\subset X\times \mathbb R^m$ is a Borel subset.    For $x\in X$ we let $$\Omega_x := \{ y\in \mathbb R^m : (x,y)\in \Omega\}.$$  
Suppose now $\psi:\Omega\to [-\infty,\infty]$ is upper-semicontinuous and such that for each $x$ the integral $\int_{\Omega_x} e^{-\psi(x,y)} dy$ exists (possibly infinite).   In the case we have in mind, for each $x$ the map $y\mapsto \psi(x,y)$ is convex and thus continuous, so this last hypothesis certainly holds.

\begin{definition}
The \emph{marginal} of $\psi$ is defined to be
$$M(\psi)(x):= M_{\Omega}(\psi)(x) := - \log \int_{\Omega_x} e^{-\psi(x,y)} dy \text{ for } x\in X.$$
\end{definition}

Now suppose $X\subset \mathbb R^n$ is open and ${\psi}(x,y):X\times \mathbb R\to \mathbb R$ be sufficiently smooth and such that for each $x\in X$ the function $e^{-\psi(x,y)}$ is integrable over $\mathbb R$.  To ease notation let 
$ \phi : =M(\psi)$ be the marginal function so
\begin{equation}
e^{-\phi(x)} = \int_{\mathbb R} e^{-{\psi}(x,y)} dy.\label{eq:defofphi}
\end{equation}
We wish to compute the first two derivatives of $\phi$.   The first derivative is easy since, under suitable regularity assumptions, we may differentiate under the integral sign.    Getting a useful formula for the second derivative is more challenging, and the following calculation is taken from  \cite{BallBartheNaor} (but is only considered there in the case that $X$ is a subset of $\mathbb R$, i.e. when $n=1$).  By means of terminology we say that a function on $u(x,y)$ on $X\times \mathbb R^m$ with values in some normed vector space has \emph{polynomial growth in $y$} if there is an integer $q$ such that
 $$ \| u(x,y)\|  = O(\|y\|^q) \text{ uniformly over } X.$$
 

\begin{proposition}\label{prop:Hessiancalc}
Assume $\psi:X\times\mathbb R\to \mathbb R$ is $\mathcal C^2$ and that 
\begin{enumerate}
\item There is a constant $c>0$ such that $\partial_y \psi\ge c$ for $|y|$ sufficiently large (uniformly over $X$) and
\item $d_x\psi$ and $\Hess(\psi)$ have polynomial growth in $y$.
\end{enumerate}
Define
 \begin{align}
\Gamma(x,y) := -\frac{1}{e^{-{\psi}}} \left( d \phi(x) \int_{-\infty}^y e^{-{\psi(x,v) }}dv - \int_{-\infty}^y e^{-{\psi(x,v)}} d_x {\psi} (x,v) dv  \right).\label{def:Gamma}
\end{align}
Then 
\begin{enumerate}
\item $\Gamma$ and $\partial_y \Gamma$ have polynomial growth in $y$.
\item $\phi$ is $\mathcal C^2$,
\begin{equation}\label{eq:derivativephi}d \phi = \frac{\int_{\mathbb R} e^{-\psi} (d_x {\psi}) dy}{\int_{\mathbb R} e^{-\psi} dy}\end{equation}
and
\begin{equation}\label{eq:Hessofpsi}
\Hess(\phi) = \frac{1}{\int_{\mathbb R} e^{-{\psi}} dy} \left( \int_{\mathbb R}  e^{-{\psi}} (\partial_y \Gamma)^T(\partial_y \Gamma)  dy + \int_{\mathbb R}  e^{-{\psi}}  i_{\Gamma}^*\Hess(\psi) dy\right)
\end{equation}
where 
$$i_\Gamma^* \Hess(\psi)= \psi_{xx} +  \psi_{xy}\Gamma  + \Gamma^T \psi_{xy}^T+ \Gamma \psi_{yy} \Gamma^T$$
(so $i_{\Gamma}^*$ is precisely as defined in \eqref{eq:defistargamma}).
\end{enumerate}
\end{proposition}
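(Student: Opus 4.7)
The plan has three steps. First, I would establish $C^1$-regularity of $\phi$ together with \eqref{eq:derivativephi} by differentiating \eqref{eq:defofphi} under the integral sign; this is legitimate because hypothesis~(1) forces $e^{-\psi(x,v)}$ to decay faster than any polynomial at infinity, uniformly on compacts in $X$, while hypothesis~(2) controls the integrand $d_x\psi$.

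Second, I would analyse $\Gamma$. Writing $\Gamma=-A/e^{-\psi}$ with $A$ denoting the bracketed expression in \eqref{def:Gamma}, both defining integrals of $A$ vanish as $y\to-\infty$, and \eqref{eq:derivativephi} shows $A\to 0$ as $y\to+\infty$. Rewriting $A$ as a tail integral and applying a Laplace-type bound that uses $\partial_y\psi\ge c$ at infinity together with the polynomial growth of $d_x\psi$ yields $|A(x,y)|\le C|y|^q e^{-\psi(x,y)}$ uniformly on compacts in $x$, so $\Gamma$ has polynomial growth. Direct differentiation of $\Gamma e^{-\psi}=-A$ in $y$ then produces the key identity
\begin{equation*}
\partial_y(\Gamma e^{-\psi})=(d_x\psi - d\phi)\,e^{-\psi},\qquad\text{equivalently}\qquad \partial_y\Gamma=\Gamma\,\partial_y\psi + d_x\psi - d\phi,
\end{equation*}
which, combined with polynomial growth of $d_x\psi$ and of $\partial_y\psi$ (the latter obtained from polynomial growth of $\psi_{yy}$ by integrating in $y$), yields polynomial growth of $\partial_y\Gamma$.

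Third, differentiating \eqref{eq:derivativephi} once more under the integral sign (justified by the estimates just obtained) gives the ``naive'' Hessian formula $\Hess(\phi)=(\int e^{-\psi}\psi_{xx}\,dy - X)/\int e^{-\psi}\,dy$, where $X:=\int e^{-\psi}(d_x\psi-d\phi)^T(d_x\psi-d\phi)\,dy$. To reach \eqref{eq:Hessofpsi} I would rewrite $X$ in two complementary ways. First, replacing $(d_x\psi-d\phi)^T e^{-\psi}$ by $\partial_y(\Gamma^T e^{-\psi})$ via the key identity and integrating by parts in $y$ gives $X=-\int e^{-\psi}\Gamma^T\psi_{xy}^T\,dy$, and since $X$ is symmetric also $X=-\int e^{-\psi}\psi_{xy}\Gamma\,dy$. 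Second, substituting $d_x\psi - d\phi=\partial_y\Gamma-\Gamma\,\partial_y\psi$, expanding, and integrating by parts the $(\partial_y\psi)^2\Gamma^T\Gamma$ piece via $\partial_y\psi\,e^{-\psi}=-\partial_y e^{-\psi}$ gives, after cancellation, $X=\int e^{-\psi}\bigl[(\partial_y\Gamma)^T(\partial_y\Gamma)+\Gamma^T\psi_{yy}\Gamma\bigr]\,dy$. Combining these three identities (adding the two expressions for $X$ from the first rewriting and subtracting that from the second) produces $-X=\int e^{-\psi}\bigl[\psi_{xy}\Gamma+\Gamma^T\psi_{xy}^T+(\partial_y\Gamma)^T(\partial_y\Gamma)+\Gamma^T\psi_{yy}\Gamma\bigr]\,dy$, and substitution into the naive formula yields exactly \eqref{eq:Hessofpsi}.

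The main obstacle I anticipate is verifying that every boundary term appearing in the repeated integrations by parts vanishes at $\pm\infty$; this is where the polynomial growth of $\Gamma$ and $\partial_y\Gamma$ established in step two is essential, combining with the super-polynomial decay of $e^{-\psi}$. Once boundary terms are controlled, the algebra is a routine combination of the three identities, reducing the general-$n$ case to what is essentially the $n=1$ calculation of \cite{BallBartheNaor} with transposes and indices tracked carefully.
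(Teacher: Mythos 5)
Your proof is correct but takes a genuinely different route from the paper's, and it is worth noting the difference. The paper follows Ball--Barthe--Naor: it introduces a transportation function $T(x,y)$ defined implicitly by mass conservation, proves $d_xT(x_0,\cdot)=\Gamma(x_0,\cdot)$, then differentiates $\log$ of the resulting functional equation twice; the residual terms $\Delta_{ij}=\partial_y\psi\,\partial^2_{x_ix_j}T-\partial^3_{x_ix_jy}T$ are then recognized as an exact $y$-derivative, $e^{-\psi}\Delta=\partial_y(e^{-\psi}\partial^2_{x_ix_j}T)$, which integrates to zero. You instead compute directly. You differentiate $d\phi$ under the integral to obtain the ``naive'' variance formula $\Hess(\phi)=\frac{1}{\int e^{-\psi}dy}\bigl(\int e^{-\psi}\psi_{xx}\,dy - X\bigr)$ with $X=\int e^{-\psi}(d_x\psi-d\phi)^T(d_x\psi-d\phi)\,dy$, then massage $X$ via your identity $\partial_y\Gamma=\Gamma\,\partial_y\psi+d_x\psi-d\phi$ and two integrations by parts into the desired form. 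This identity is in fact also present in the paper --- it is exactly the relation obtained by differentiating $\log$ of \eqref{eq:functionalequationT} once in $x$ and evaluating at $x_0$ --- so both arguments share the same algebraic core but organize the second derivative differently. Your route is more elementary in that it avoids introducing and controlling regularity and growth of $T$ and its higher derivatives (the paper needs polynomial growth of $\partial^2_{x_ix_j}T$ to kill the boundary term of $\int e^{-\psi}\Delta\,dy$, which it leaves as an exercise); your only boundary terms involve $\Gamma$, $\partial_y\Gamma$ and $\psi_{xy}$, whose growth you already control. The paper's route, on the other hand, makes visible the transport-theoretic interpretation that is the conceptual heart of \cite{BallBartheNaor}. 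I verified that your two rewritings of $X$ and the final recombination do reproduce \eqref{eq:Hessofpsi} exactly, once one fixes a consistent row/column convention for $\Gamma$ and $\psi_{xy}$ (there is some notational looseness in the paper's own display of $i_\Gamma^*\Hess(\psi)$, which does not affect the result since $\psi_{yy}$ is scalar for $m=1$). Be sure in the final write-up to explicitly check that the boundary terms in the integration by parts of the $(\partial_y\psi)(\partial_y\Gamma)^T\Gamma$ piece vanish, since $\partial_y\psi$ grows linearly at infinity; this follows from polynomial growth of $\Gamma,\partial_y\Gamma$ and the exponential decay of $e^{-\psi}$ forced by hypothesis (1), exactly as you anticipate.
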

\begin{proof}
The statement is local in $X$, and so in the following we will allow $X$ to be shrunk without future comment.   The first thing to note is that the conditions we are placing in $\psi$ are certainly enough to differentiate under the integral sign in \eqref{eq:defofphi} so the first derivative of $\phi$ is given by \eqref{eq:derivativephi}.   Using this gives
\begin{align}\Gamma(x,y) &=-\frac{1}{e^{-{\psi}}} \left( d \phi(x) \int_{-\infty}^y e^{-{\psi(x,v) }}dv - \int_{-\infty}^y e^{-{\psi(x,v)}} d_x {\psi} (x,v) dv  \right) \label{eq:Gamma1}\\
&= \frac{1}{e^{-\psi}} \left( d \phi(x) \int_{y}^\infty e^{-{\psi(x,v) }}dv - \int_{y}^\infty e^{-{\psi(x,v)}} d_x {\psi} (x,v) dv \right)  \label{eq:Gamma2}.\end{align}
From this it is easy to check that $\Gamma$ has polynomial growth in $y$.  For instance to control the second integral in \eqref{eq:Gamma2} note that for $v>y\gg 0$ our hypotheses on $\psi$ tell us that $\psi(x,v) \ge \psi(x,y) + c(v-y)$ and $d_x\psi(x,v) = O(y^q)$ so  
$$\frac{1}{e^{-\psi(x,y)}} \int_{y}^\infty e^{-\psi(x,v)} d_x\psi(x,v) dv \le C \int_{y}^{\infty} e^{-c(v-y)} v^q dv = O(y^q)$$
where the last equality is elementary integration by parts.  The first integral in \eqref{eq:Gamma2} is similarly controlled, and to control $\Gamma$ for $y\ll 0$ one argues the same way using \eqref{eq:Gamma1}.  Thus $\Gamma$ has polynomial growth in $y$.  Computing $\partial_y \Gamma$, a similar argument shows that  $\partial_y \Gamma$ also has polynomial growth in $y$.\\


 Now fix a reference point $x_0\in X$.  We define the so-called ``transportation function" $T(x,y)$ by requiring that
\begin{equation} \frac{1}{e^{-\phi(x)}}\int_{-\infty}^{T(x,y)} e^{-\psi(x,u)} du =  \frac{1}{e^{-\phi(x_0)}} \int_{-\infty}^y e^{-\psi(x_0,u)} du.\label{eq:defofT}\end{equation}
The fact that such a $T$ exists follows from the fact that for each fixed $x$ the function $\frac{e^{-\psi(x,y)}}{e^{-\phi(x)}}$ defines a density on $\mathbb R$ with total mass 1 (this is by the definition of $\phi$).   Observe from its definition we clearly have
$$ T(x_0,y)=y,$$
and thus
\begin{equation}\partial_y T(x_0,y) =1.\label{eq:partialyTatx0}\end{equation}

We will need some specifics about the regularity of $T$.    Set
$$\Phi(x,y,z) = \frac{1}{e^{-\phi(x)}}\int_{-\infty}^{z} e^{-\psi(x,u)} du -  \frac{1}{e^{-\phi(x_0)}} \int_{-\infty}^y e^{-\psi(x_0,u)} du$$
so $T$ is defined implicitly by
$$\Phi(x,y,T(x,y)) = 0.$$
Since $\partial_z \Phi = \frac{e^{-\psi(x,z)}}{e^{-\phi(x)}}$ is non-vanishing, the implicit function theorem gives that $T$ is smooth and
\begin{equation} d_x T(x,y) = - \frac{d_x \Phi(x,y,T(x,y))}{\partial_z \Phi(x,y,T(x,y))}.\label{eq:dxT}\end{equation}
Now
$$ d_x\Phi = \frac{1}{e^{-\phi(x)}} \left( d\phi_x \int_{-\infty}^z e^{-\psi(x,u)} du - \int_{-\infty}^z e^{-\psi(x,u)} d_x\psi(x,u) du\right)$$
so evaluating at $x_0,y$ gives 
\begin{align*}d_xT(x_0,y)  &= -\frac{1}{e^{-{\psi(x_0,y)}}} \left( d \phi_{x_0} \int_{-\infty}^y e^{-{\psi(x_0,v) }}dv - \int_{-\infty}^y e^{-{\psi(x,v)}} d_x {\psi} (x_0,v) dv  \right) \\
&= \Gamma(x_0,y)\end{align*}
where $\Gamma$ is as defined in \eqref{def:Gamma}.   Moreover by differentiating \eqref{eq:dxT} and arguing as we did for $\Gamma$ one can check that $\partial_{x_i x_j} T(x_0,y)$ has polynomial growth in $y$ (this is left as an exercise to the reader). \\

We are now ready to compute the Hessian of $\phi$.  First differentiate  \eqref{eq:defofT} with respect to $y$ to get
\begin{equation}   \frac{1}{e^{-\phi(x)}} e^{-\psi(x,T(x,y))} \partial_y T(x,y) =  \frac{1}{e^{-\phi(x_0)}} e^{-\psi(x_0,y)}.\label{eq:functionalequationT}\end{equation}

Then taking the logarithm of \eqref{eq:functionalequationT}  and differentating with respect to $x_i$ gives
$$\partial_{x_i} \phi(x)-\partial_{x_i}\psi(x,T(x,y))  - \partial_y \psi(x,T(x,y)) \partial_{x_i}T(x,y) + \frac{\partial^2_{x_iy} T(x,y)}{\partial_y T(x,y)} = 0,$$
and then differentiating with respect to $x_j$ yields
\begin{align*}0=\partial^2_{x_i x_j} \phi - \partial^2_{x_ix_j}\psi  - \partial^2_{x_iy} \psi \partial_{x_j}T - \partial^2_{x_jy} \psi \partial_{x_i} T - \partial^2_y \psi \partial_{x_j}T \partial_{x_i} T - \partial_y \psi \partial^2_{x_jx_i}T \\
+ {(\partial_yT)^{-2}}(\partial_y T \partial^3_{x_jx_iy}T - \partial ^2_{x_iy}T \partial^2_{x_jy}T)\end{align*}
where this is all computed at the point $(x,T(x,y))$.   Evaluating at $x_0$ and using that $T(x_0,y)=y$ and $\partial_y T(x_0,y)=1$ and $\Gamma(x_0,y) = d_xT(x_0,y)$ this becomes 
\begin{equation}\partial^2_{x_i x_j} \phi = \partial^2_{x_ix_j}\psi  +  \partial^2_{x_iy} \psi \Gamma_j +\partial^2_{x_jy} \psi \Gamma_i+ \partial^2_y \psi \Gamma_j\Gamma_i + \partial_y \psi \partial^2_{x_jx_i}T - \partial_y T \partial^3_{x_jx_iy}T +\partial_{y}\Gamma_i \partial_{y}\Gamma_j\label{eq:Hessphiatxy}\end{equation}
all evaluated at $(x_0,y).$

The first four terms of the right hand side of this add up to the $(i,j)$ entry of $i^*_{\Gamma}\Hess(\psi)$ and the last term is the $(i,j)$ entry of $\partial_y \Gamma^T \partial_y \Gamma$.  So, for all $y$
\begin{equation}\Hess(\phi)({x_0}) = i_{\Gamma}^* \Hess(\psi) + \partial_y \Gamma^T \partial_y \Gamma + \Delta \label{eq:equationforHessphiatanypoint}\end{equation}
where $$\Delta(y)_{ij} := \partial_y\psi \partial^2_{x_jx_i} T |_{x=x_0}-\partial^3_{x_jx_iy} T |_{x=x_0}.$$
The plan is to multiply \eqref{eq:equationforHessphiatanypoint} by $e^{-\psi}$ and integrate with respect to $y$.  To this end note
$$ e^{-\psi(x_0,y)} \Delta(y) = \partial_y (e^{-\psi(x_0,y)} \partial^2_{x_ix_j}T(x_0,y))$$
and so $$ \int_{\mathbb R} e^{-\psi(x_0,y)} \Delta(y) dy =\left[e^{-\psi(x_0,y)}\partial^2_{x_ix_j} T(x_0,y)\right]_{y=-\infty}^{y=+\infty}=0$$ (this last equality uses that $\partial_{x_ix_j}T(x_0,y)$ has polynomial growth in $y$).  Note also that $\Gamma$ and $\partial_y \Gamma$ having polynomial growth in $y$ along with our assumptions on $\psi$ imply that the integrals on the right-hand side of \eqref{eq:Hessofpsi} are finite.   Then as \eqref{eq:equationforHessphiatanypoint} holds for any $(x_0,y)$ we may multiple by both sides by $e^{-\psi(x_0,y)}$ and then integrate over $y$ to give the result we want \eqref{eq:Hessofpsi} at the point $x_0$.\medskip
\end{proof}


\begin{theorem}[Prekopa's Theorem for Smooth Functions]\label{thm:prekopasmooth}
Let $F\subset \Sym^2(\mathbb R^n)$ be a convex Dirichlet set  and $X\subset \mathbb R^n$ be open.  Suppose that $\psi(x,y):X\times \mathbb R^m\to \mathbb R$ is $\mathcal C^2$ and $F\star \mathcal P$-subharmonic and satisfies 
\begin{enumerate}
\item $\partial_{y_i} \psi \ge c>0$ for all $i$ uniformly over $X$ and 
\item $d\psi$ and $\Hess(\psi)$ have polynomial growth in $y$.
\end{enumerate}
Then $M_{X\times \mathbb R^m}(\psi)$ is $F$-subharmonic.
\end{theorem}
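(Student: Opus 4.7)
The plan is to handle the case $m=1$ directly using Proposition~\ref{prop:Hessiancalc}, and then to obtain the general case by integrating out the $y$-coordinates one at a time. For the base case $m=1$, fix $x_0\in X$ and apply Proposition~\ref{prop:Hessiancalc}, which under hypotheses (1)--(2) gives $M(\psi)\in C^2$ together with the identity
\[
\Hess_{x_0}(M(\psi)) \;=\; \frac{1}{Z}\int_{\mathbb R} e^{-\psi}(\partial_y\Gamma)^T(\partial_y\Gamma)\,dy \;+\; \frac{1}{Z}\int_{\mathbb R} e^{-\psi}\, i_{\Gamma}^{*}\Hess(\psi)\,dy,
\]
where $Z:=\int_{\mathbb R} e^{-\psi(x_0,y)}\,dy>0$ and $\Gamma(x_0,\cdot)$ has polynomial growth (so that both integrals are finite). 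Because $\psi$ is $C^2$ and $F\star\mathcal P$-subharmonic, the pointwise condition $\Hess(\psi)(x_0,y)\in F\star\mathcal P$ combined with the definition of the product gives $i_{\Gamma(x_0,y)}^{*}\Hess(\psi)(x_0,y)\in F$ for every $y$. Viewing $e^{-\psi}\,dy/Z$ as a probability measure, the second summand above is the expectation of an $F$-valued matrix and therefore lies in $F$ by convexity and closedness of $F$ (being a limit of finite convex combinations of elements of $F$). The first summand is an expectation of positive semidefinite matrices, hence lies in $\mathcal P$. Adding and invoking the Dirichlet property $F+\mathcal P\subset F$ gives $\Hess_{x_0}(M(\psi))\in F$ at every $x_0$, so $M(\psi)\in F(X)$.

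For general $m$, write $y=(y',y_m)$, set $\psi_1(x,y'):=-\log\int_{\mathbb R} e^{-\psi(x,y',y_m)}\,dy_m$, and iterate. To check that $\psi_1$ is $F\star\mathcal P$-subharmonic on $X\times\mathbb R^{m-1}$ we use Proposition~\ref{prop:slices}, which reduces this to (i) convexity of $y'\mapsto\psi_1(x_0,y')$ for each fixed $x_0$, and (ii) $F$-subharmonicity of $x\mapsto\psi_1(x,y'_0+\Gamma' x)$ for each affine $\Gamma':\mathbb R^n\to\mathbb R^{m-1}$ and each $y_0'\in\mathbb R^{m-1}$. Property (i) follows from the classical Prekopa--Leindler inequality applied to the convex function $y\mapsto\psi(x_0,y)$. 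For property (ii), set $\tilde\psi(x,y_m):=\psi(x,y'_0+\Gamma' x,y_m)$; then $\psi_1(x,y'_0+\Gamma' x) = M(\tilde\psi)(x)$, and $\tilde\psi$ is $F\star\mathcal P$-subharmonic on $X\times\mathbb R$ because every affine section of $\tilde\psi$ is the restriction of an affine section of $\psi$ (again by Proposition~\ref{prop:slices}), so the just-established $m=1$ case gives $M(\tilde\psi)\in F(X)$. Iterating $m$ times completes the proof.

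The principal technical obstacle is to verify that the regularity hypotheses (1) and (2) of the theorem pass from $\psi$ to $\psi_1$ (and to $\tilde\psi$), so that Proposition~\ref{prop:Hessiancalc} is legitimately applicable at each inductive stage. Differentiation under the integral shows that $\partial_{y_i'}\psi_1$ is the expectation of $\partial_{y_i'}\psi$ under the probability measure $e^{-\psi}\,dy_m/Z$, so the bound $\partial_{y_i'}\psi\ge c$ transfers directly to $\psi_1$; a similar argument yields joint $C^2$ regularity. Polynomial growth in $y'$ of $d\psi_1$ and $\Hess(\psi_1)$ is controlled by Laplace-type tail estimates of exactly the same flavour as those used to bound $\Gamma$ in the proof of Proposition~\ref{prop:Hessiancalc}: the strict monotonicity $\partial_{y_m}\psi\ge c$ produces exponential decay of $e^{-\psi}$ in $y_m$, which absorbs the polynomial-in-$y$ factors coming from $d\psi$ and $\Hess(\psi)$.
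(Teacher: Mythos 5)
Your proof is correct, and your base case $m=1$ is exactly the paper's argument: apply Proposition \ref{prop:Hessiancalc}, note that the second integral is an average of elements of the closed convex set $F$ (since $i_{\Gamma}^*\Hess(\psi)\in F$ pointwise), the first is positive semidefinite, and conclude via $F+\mathcal P\subset F$. Where you genuinely diverge is the inductive step. The paper uses associativity of the product, $F\star\mathcal P_m=(F\star\mathcal P_{m-1})\star\mathcal P_1$, and applies the already-established one-variable case with the enlarged convex Dirichlet set $F\star\mathcal P_{m-1}$ in place of $F$; this makes the partial marginal $\zeta$ over $y_m$ automatically $F\star\mathcal P_{m-1}$-subharmonic in one stroke, with convexity in the remaining $y$-variables included, and keeps the argument self-contained. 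You instead reassemble the $F\star\mathcal P_{m-1}$-subharmonicity of $\psi_1$ slice-by-slice via Proposition \ref{prop:slices}, using the classical Prekopa--Leindler inequality for convexity in $y'$ and the one-variable case with the original $F$ along affine graphs; this is also valid --- the restriction $\tilde\psi(x,y_m)=\psi(x,y_0'+\Gamma'x,y_m)$ is indeed $F\star\mathcal P_1$-subharmonic by Proposition \ref{prop:slices} --- but it imports classical Prekopa as an external ingredient (avoidable, e.g.\ by applying your own $m=1$ case with $F=\mathcal P_{m-1}$), and it requires verifying the hypotheses of the one-dimensional theorem for each restricted $\tilde\psi$, where polynomial growth in $y_m$ along the graph only follows after confining $x$ to a bounded set (harmless, since $F$-subharmonicity is local and $X$ may be shrunk, as in the proof of Proposition \ref{prop:Hessiancalc}). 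In both routes the real technical burden is the one you correctly flag: transferring hypotheses (1)--(2) to the partial marginal so the iteration can continue; your sketch (differentiation under the integral plus Laplace-type tail estimates) handles this at essentially the same level of detail as the paper's appeal to the formulae of Proposition \ref{prop:Hessiancalc}.
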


\begin{proof}
To ease notation set $\phi:= M_{X\times \mathbb R^m}(\psi)$.\\

Consider first the case $m=1$.  Then the conditions (1) and (2) are precisely those that allow us to apply Proposition \ref{prop:Hessiancalc}.  The assumption that ${\psi}$ is $F\star \mathcal P$-subharmonic says that for any $\Gamma$ and any $(x,y)\in X\times \mathbb R$ we have
$$i_{\Gamma}^* \Hess(\psi)(x,y) \in F$$
In particular this holds for $\Gamma(x,y)$ defined in \eqref{def:Gamma} and thus the quantity
$$ \frac{1}{\int_{\mathbb R} e^{-{\psi}} dy} \int_{\mathbb R} e^{-{\psi}}  i_{\Gamma}^* \Hess({\psi})dy$$
is an average of elements of $F$.  Hence as $F$ is convex and closed this also lies in $F$.

On the other hand the quantity 
$$\frac{1}{\int_{\mathbb R} e^{-{\psi}} dy}  \int_{\mathbb R} e^{-{\psi}} (\partial_y \Gamma)^t(\partial_y \Gamma)  dy $$
is clearly positive semidefinite, and thus from \eqref{eq:Hessofpsi} we deduce that $ \Hess(\phi)_x\in F$.
Thus $\phi$ is $F$-subharmonic, completing the proof when $m=1$.\\

The statement for higher values of $m$ is easily proved by induction since we can integrate in the $y$ direction one variable at a time.   The reason that this work is that if $\mathcal P_m$ denotes the subset of semipositive matrices in $\Sym^2(\mathbb R^m)$ then by associativity of products \cite[Appendix B]{ross_nystrom_minimum} $$F\star \mathcal P_{m} = F \star (\mathcal P_{m-1}\star \mathcal P_1) = (F\star \mathcal P_{m-1}) \star \mathcal P_1.$$  Then we write $\tilde{X} := X\times \mathbb R^{m-1}$ and for $z = (x,y_1,\ldots,y_{m-1})\in \tilde{X}$ and $y_m\in \mathbb R$ set $$\tilde{\psi}(z,y_m) = \psi(x,y).$$

Our hypothesis on $\psi$ tell us that $\tilde{\psi}$ satisfies the regularity conditions of Proposition \ref{prop:Hessiancalc} and is $(F\star \mathcal P_{m-1}) \star \mathcal P_1$-subharmonic.   Thus if we set
$$\zeta: = M_{\tilde{X}\times \mathbb R}(\tilde{\psi})$$
then $\zeta$ is $F\star \mathcal P_{m-1}$-subharmonic on $\tilde{X}$.  Moreover $\zeta$ is $\mathcal C^2$, and the formulae for its derivatives given by Proposition \ref{prop:Hessiancalc} tell us we may apply the inductive hypothesis to $\zeta$.  Thus $M_{X\times \mathbb R^{m-1}}(\zeta)$ is $F$-subharmonic.    But this marginal function is obtained from $\psi$ by first integrating over the $y_m$ variable and then over the remaining $y_1,\ldots,y_{m-1}$, i.e.
$$ M_{X\times \mathbb R^m}(\psi) = M_{X\times \mathbb R^{m-1}}(\zeta)$$
which proves the induction step.
\end{proof}

\begin{remark}[Subequations that depend on the gradient part] There is a condition on $F$ we can make that allows us to extend the above Prekopa's Theorem to subequations that depends also on the gradient part.   Let $X\subset \mathbb R^n$ be open and suppose $F\subset J^2(X)$ is a convex primitive subequation, independent of the constant part and also that
\begin{equation} (p,A)\in F \Rightarrow (p +q, A + q q^T) \in F \text{ for all } q\in \mathbb R^n.\label{eq:extrasssumptiononF}\end{equation}
Assume now $\psi$ is as in the statement of Theorem \ref{thm:prekopasmooth} and we claim that $M(\psi)$ is $F$-subharmonic.  

It is sufficient to prove this when $m=1$ (for then the proof of higher $m$ follows as above).    Let $\phi = M(\psi)$ as before and define $\Gamma$ exactly as in \eqref{def:Gamma}.  Then from the definition of products (see Remark \ref{rmk:productswithgradient}) at each point $(x_0,y)$ we have
$$ (\psi_x + \Gamma^T \partial_y \psi, i^*_\Gamma \Hess(\psi) ) \in F.$$
and \eqref{eq:extrasssumptiononF} implies
$$ ( \psi_x  + \Gamma^T \psi_y - \partial_y \Gamma^T,  i^*_\Gamma \Hess(\psi)  +  (\partial_y \Gamma_y)^T (\partial_y \Gamma) ) \in F.$$
We then multiply this by $e^{-\psi}$ and integrate over $y$.   Observing that
$$e^{-\psi} ( \Gamma^T \psi_y - \partial_y \Gamma^T)  = - \partial_y (e^{-\psi} \Gamma^T)$$
this extra term integrates to zero, and from (\ref{eq:derivativephi},\ref{eq:Hessofpsi}) we deduce $(d_x \phi, \Hess(\phi))\in F$.
\end{remark}

\section{Prekopa Theorem: The Non-Smooth Case}\label{sec:prekopanonsmooth}

\begin{theorem}\label{thm:prekopageneral}
Let $F$ be a convex Dirichlet set and let $V\subset \mathbb R^m$ be open and convex, and suppose $\psi:X\times V\to \mathbb R\cup\{-\infty\}$ is $F\star \mathcal P$-subharmonic.  Then $M_{X\times V}(\psi)$ is $F$-subharmonic.
\end{theorem}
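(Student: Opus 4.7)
The proof will reduce the non-smooth case to the smooth case of Theorem~\ref{thm:prekopasmooth} by approximation. The plan is to produce a family of smooth $F\star\mathcal{P}$-subharmonic functions $\psi_{\epsilon,\delta}$ that satisfy the integrability and growth hypotheses of Theorem~\ref{thm:prekopasmooth}, apply that theorem, and take limits using the closure properties of $F$-subharmonic functions collected in Proposition~\ref{prop:basicproperties}. Concretely, I take
\[
\psi_{\epsilon,\delta}(x,y) \;:=\; (\psi*\rho_\epsilon)(x,y) \;+\; \delta(1+|y|^2)
\]
on the $\epsilon$-interior of $X\times V$, where $\rho_\epsilon$ is a standard radial mollifier on $\mathbb{R}^{n+m}$ supported in a ball of radius $\epsilon$. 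The convolution $\psi*\rho_\epsilon$ is smooth and remains $F\star\mathcal{P}$-subharmonic because $F\star\mathcal{P}$ is a convex Dirichlet set---each defining condition on the Hessian $A$ is convex---so Proposition~\ref{prop:basicproperties}(5) applies to this integral of translates of $\psi$. The quadratic penalty contributes the block $\diag(0,2\delta I_m)$ to the Hessian, which under $i_\Gamma^*$ maps to $2\delta\Gamma\Gamma^T\in\mathcal{P}$; since $F+\mathcal{P}\subset F$ by the Dirichlet property, the sum is still $F\star\mathcal{P}$-subharmonic. The penalty also enforces quadratic growth in $|y|$, hence $|\partial_{y_i}\psi_{\epsilon,\delta}|\to\infty$ linearly as $|y_i|\to\infty$ together with polynomial growth of the derivatives appearing in Proposition~\ref{prop:Hessiancalc}. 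Thus Theorem~\ref{thm:prekopasmooth} yields that $M(\psi_{\epsilon,\delta})$ is $F$-subharmonic on the shrunken domain.

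I then take limits in two stages. Fixing $\epsilon$ and letting $\delta\downarrow 0$, we have $\psi_{\epsilon,\delta}\downarrow\psi*\rho_\epsilon$ pointwise, so monotone convergence gives $M(\psi_{\epsilon,\delta})\downarrow M(\psi*\rho_\epsilon)$ and Proposition~\ref{prop:basicproperties}(2) yields that $M(\psi*\rho_\epsilon)$ is $F$-subharmonic. For $\epsilon\downarrow 0$ the plan is to factor the joint mollification as a $y$-only mollification followed by an $x$-only mollification: Jensen's inequality applied to the convex function $y\mapsto\psi(x,y)$ gives $\psi*_y \rho_\epsilon \geq \psi$ pointwise with convergence as $\epsilon\to 0$, and for each fixed $y$ the slice $x\mapsto\psi(x,y)$ is $F$-subharmonic (take $\Gamma=0$ in Proposition~\ref{prop:slices}), so an $x$-mollification can be controlled using the closure properties of $F(X)$. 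Combined with Proposition~\ref{prop:basicproperties}(2,3), this should give $M(\psi*\rho_\epsilon)\to M(\psi)$ in a sense that preserves $F$-subharmonicity.

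Two obstacles will need care. The principal one is the step $\epsilon\downarrow 0$: joint $(x,y)$-mollification is not pointwise monotone in general, and the sequential factorisation sketched above, together with careful bookkeeping to verify that $F\star\mathcal{P}$-subharmonicity is preserved at each intermediate stage, is what I expect to resolve this (one could alternatively use a Gaussian or semigroup mollifier so that the convex-in-$y$ structure forces monotone decay). The second is that Theorem~\ref{thm:prekopasmooth} is stated over $X\times\mathbb{R}^m$ rather than $X\times V$: for general convex $V$ I would exhaust $V$ by compactly-contained open convex subsets $V_k\uparrow V$ and add to $\psi$ a smooth convex barrier $b_k(y)$ independent of $x$, vanishing on $V_{k-1}$ and diverging to $+\infty$ near $\partial V_k$. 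Its contribution $\Gamma\,\Hess(b_k)\,\Gamma^T\in\mathcal{P}$ under $i_\Gamma^*$ is absorbed by $F+\mathcal{P}\subset F$, preserving $F\star\mathcal{P}$-subharmonicity, and once the barrier is active the integral is effectively confined to $V_k$, reducing to the $V=\mathbb{R}^m$ case after trivial extension by $+\infty$ outside $V_k$.
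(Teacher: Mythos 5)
Your overall strategy (regularize, apply the smooth Theorem \ref{thm:prekopasmooth}, pass to limits via Proposition \ref{prop:basicproperties}) is the same as the paper's, but the two places you flag as "needing care" are genuine gaps, and the fixes you sketch do not close them. The principal one is the limit $\epsilon\downarrow 0$ of the mollification. For a general convex Dirichlet set $F$, $F$-subharmonic functions satisfy no sub-mean-value property, and a merely upper-semicontinuous $F$-subharmonic function need not be the limit of its mollifications in any of the modes that Proposition \ref{prop:basicproperties} can absorb (decreasing, locally uniform, or sup$^*$ of a family). Concretely, for $F=\{A:\tr(UA)\ge 0\}$ with $U$ positive semidefinite and degenerate, any upper-semicontinuous function of the variables in $\ker U$ is $F$-subharmonic, and its $x$-mollification need not converge to it even pointwise; so your proposed resolution --- Jensen in $y$ plus "closure properties" for the $x$-mollification --- breaks exactly at the $x$-stage, and a Gaussian/semigroup kernel does not help there either. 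The paper's missing ingredient is an intermediate \emph{sup-convolution} step (its Step 4 and Appendix \ref{appendix:supconvolution}): sup-convolution works at the viscosity level for arbitrary Dirichlet sets, produces \emph{continuous} $F\star\mathcal P$-subharmonic approximants decreasing pointwise to $\psi$ (so the marginals decrease, handled by Proposition \ref{prop:basicproperties}(2)), and only \emph{after} continuity is in hand is the mollifier used, where convergence is locally uniform and Proposition \ref{prop:basicproperties}(3) applies.

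The second gap is that your regularized functions do not verify the hypotheses of Theorem \ref{thm:prekopasmooth}. The penalty $\delta(1+|y|^2)$ gives condition (1) ($\partial_y\psi\ge c$ for large $\|y\|$) but not condition (2): $d\psi$ and $\Hess(\psi)$ of $\psi*\rho_\epsilon$ can grow arbitrarily fast in $y$ (e.g.\ $\psi=e^{y}$ is $F\star\mathcal P$-subharmonic), and a barrier $b_k$ that diverges to $+\infty$ near $\partial V_k$ makes the derivatives blow up non-polynomially there; moreover "extension by $+\infty$" is not available, since $F$-subharmonic functions take values in $\mathbb R\cup\{-\infty\}$. The paper resolves both points at once with a \emph{finite} barrier: after reducing to $\psi$ bounded below and $V$ bounded with $\psi$ defined near $\overline V$, it replaces $\psi$ by $\psi_n=\max\{\psi,\,n\rho(y)+u(x)\}$, where $\rho$ is a convex gauge-plus-quadratic function equal to $A\|y\|^2+B$ for large $\|y\|$ and $u\in F(X)$ is bounded with $u\le\psi$. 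This makes $\psi_n$ defined on all of $X\times\mathbb R^m$, literally equal to $A\|y\|^2+v(x)$ outside a compact set in $y$ (which is what both the growth hypotheses and the sup-convolution construction require), and the resulting \emph{increasing} limits of marginals are handled by the sup$^*$ property together with the separately established upper semicontinuity of $M_{X\times V}(\psi)$ (Fatou). You would need to incorporate these reductions (boundedness below, the finite max-barrier, and the sup-convolution step) for your argument to go through.
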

\begin{proof}
Notice first that part of $\psi$ being $F\star \mathcal P$-subharmonic includes that $\psi$ is upper-semicontinuous, and so by Fatou's Lemma we get that  $M_{X\times V}(\psi)$ is also upper-semicontinuous.   The proof will proceed by a number of approximations.\\ 

{\bf Step 1: } We may assume without loss of generality that $\psi$ is bounded from below.\\

To see this we may as well assume $X$ is bounded, and let $u$ be a bounded $F$-subharmonic function on $X$ (this can be done easily by selecting any $A\in F$ and setting $u(x)=x^T A x$).  Then consider $\psi_n := \max\{ \psi, u-n\}$ so $\psi_n\searrow \psi$ pointwise as $n\to \infty$.  Each $\psi_n$ is bounded from below and $F\star \mathcal P$-subharmonic, and hence if we know the theorem holds under this assumption we have that $M(\psi_n)$ is $F$-subharmonic and decreases to $M(\psi)$, implying that $M(\psi)$ is $F$-subharmonic.\\

{\bf Step 2: } We may assume in addition that $V$ is a bounded open convex set and that $\psi$ is defined on a neighbourhood of $\overline{V}$.\\

To see this write $V = \bigcup_n {V_n}$ where each $V_n$ is bounded open and convex and so that a neighbourhood of $\overline{V_n}$ is contained in $V$.   Then $M_{X\times V_n}(\psi)$ decreases to $M_V(\psi)$, so if we know each $M_{X\times V_n}(\psi)$ is $F$-subharmonic the same is true for $M_{X\times V}(\psi)$.\\

{\bf Step 3: } We may assume that $V = \mathbb R^m$ and that for $\|y\|$ sufficiently large $\psi = A\|y\|^2 + v(x)$ for some constant $A$ and bounded $F$-subharmonic function $v$.\\

Let $V\subset \mathbb R^m$ be bounded, open and convex.  By affine transformation, we may as well assume $0\in V$.  Let $\gamma_V$ be the gauge function of $V$ given by
$$\gamma_V(y) = \inf\{ \lambda : y\in \lambda v\}$$
so $\gamma_V$ is well defined,  convex,  is $O(\|y\|)$ for $\|y\|$ large and  $\gamma_V\le 1$ on $V$ and $\gamma_V>1$ on $\overline{V}^c$.     Thus we can find constants $A,B$ so that the convex function 
$$\rho = \max \{ \gamma_V - 1, A\|y\|^2 + B\}$$
satisfies $\rho\le 0$ on $V$ and $\rho>0$ on $\overline{V}^c$ and $\rho  = A\|y\|^2 + B$ for $\|y\|$ sufficiently large.  

Now consider
$$\phi_n(x,y) = n\rho(y) + u(x)$$
where $u$ is a bounded $F$-subharmonic function chosen that that $u\le \psi$ on $V$ (this is possible as we  may as well assume that $X$ is bounded, and we are already assuming that $\psi$ is bounded from below).  Then $\phi_n$ is $F\star \mathcal P$-subharmonic.  We set
$$\psi_n = \max \{ \psi, \phi_n\}.$$
Note that $\psi_n$ is well defined on all of $X\times \mathbb R^m$ for $n$ sufficiently large (since $\psi$ is assumed to be defined on a neighbourhood of $\overline{V}$ and just outside $\overline{V}$ we will have $\phi_n>\psi$ for $n$ sufficiently large).  Moreover $\psi_n$ is $F\star \mathcal P$-subharmonic.    Thus $\psi_n$ satisfies the additional hypotheses described in Step 3, so if the result holds under these hypotheses then $M(\psi_n)$ is $F$-subharmonic.  

Clearly $\psi_n$ increases to $\psi$ on $X\times \overline{V}$ and to $\infty$ on $X\times \mathbb R^m\setminus \overline{V}$.  Thus $M_{X\times V}(\psi_n)$ increases to $M_{X\times \overline{V}}(\psi) = M_{X\times V}(\psi)$, i.e.  
$$M_{X\times V}(\psi) = \sup_n M_{X\times V}(\psi_n).$$
But as we already observed, $M_{X\times V}(\psi)$ is upper-semicontinuous and so we can replace this supremum with its upper-semicontinuous regularization.  Thus $M_{X\times V}(\psi)$ is $F$-subharmonic.\\

{\bf Step 4: } We may assume in addition that $\psi$ is continuous.\\

This follows from the general principle of sup-convolution.  Assume $X$ is a ball in $\mathbb R^n$ and $\psi$ satisfies the conditions in Step 3.  For small $\epsilon$ set
$$\psi_n(x,y) = \sup_{(x',y')\in X \times \mathbb R} \{ \psi(x',y') - \frac{1}{2\epsilon} (\|x-x'\|^2 + \|y-y'\|^2\} \text{ for } (x,y)\in \frac{1}{2}X\times \mathbb R.$$

General properties of sup-convolution imply that $\psi_n$ is $F\star \mathcal P$-subharmonic, continuous and that $\psi_n\searrow \psi$ pointwise.  In fact since $\psi = A\|y\|^2 + v(x)$ for large $\|y\|$ the same holds for each $\psi_n$ (albeit for different  $A,v$).   For the reader's convenience these properties are proved in Appendix \ref{appendix:supconvolution}.  Thus $M(\psi_n)$ decreases to $M(\psi)$ so if we know each $M(\psi_n)$ is $F$-subharmonic the same is true of $M(\psi)$.\\

{\bf Step 5: }  We may assume in addition that $\psi$ is smooth.\\

Consider the mollification $\psi_\epsilon$ of $\psi$ obtained from a mollifier that is th
Let $\alpha_0:\mathbb R\to \mathbb R$ be a smooth mollifier,  set $\alpha(x,y):= \Pi_{i,j} \alpha(x_i)\alpha(y_j)$ and consider the mollifier $\psi_\epsilon = \alpha_\epsilon * \psi$.  The $\psi_\epsilon$ is an average of translates of $\psi$ each of which remain $F\star \mathcal P$-subharmonic since $F$ is assumed to be constant-coefficient.  And as $F\star \mathcal P$ is closed and convex, and so $\psi_\epsilon$ is $F\star \mathcal P$-subharmonic.  Moreover $\psi_\epsilon$ has the same asymptotic behavior as $\psi$ for large $\|y\|$ and converges to $\psi$ locally uniformly as $\epsilon\to 0$.  This implies that $M(\psi_\epsilon)$ converges locally uniformly to $M(\psi)$, so if we know the result for such smooth functions then $M(\psi)$ will be $F$-subharmonic.\\

{\bf Step 7: } We are left now with a smooth $F\star \mathcal P$ function $\psi:X\times \mathbb R\to \mathbb R^m$ that for $\|y\|$ sufficiently large is equal to $A\|y\|^2+v(x)$ for some smooth $F$-subharmonic function $v$.         This $\psi$ certainly satisfies the condition from Theorem \ref{thm:prekopasmooth} and so for such $\psi$ we have already proved that $M(\psi)$ is $F$-subharmonic completing the proof.
\end{proof}

\section{The Brunn-Minkowski Theorem and the Minimum Principle}\label{sec:BMandminimum}

Let $X\subset \mathbb R^n$ be open and $K\subset X\times \mathbb R^m$.  We always will assume that 
 $$K_x: = \{ y \in \mathbb R^m : (x,y)\in K\}$$
  is measurable, and define
$$B_K(x):=-\log \vol(K_x).$$
Thus $B_K$ is precisely the marginal function $M_{K}(0)$ of the zero function.\\

\subsection{Semi-continuity of Marginal Functions}
The following gives some topological conditions on $K$ that ensure the marginal function is upper-semicontinuous.  
 \begin{lemma}\label{lem:semicontinuousi}
 Suppose $K\subset X\times \mathbb R^m$ is bounded and such that for each $x$
 \begin{equation}\label{eq:topologicalboundarycondition}K_x \text{ is convex with non-empty interior and } (\Int K)_x = \Int(K_x).\end{equation} 
 Suppose also  $\psi$ is upper-semicontinuous on $K$ and bounded from below and for each $x\in X$ the integral $\int_{K_x} e^{-\psi(x,y)} dy$ exists.  Then $M_K(\psi)$ is upper-semicontinuous on $X$.
 \end{lemma}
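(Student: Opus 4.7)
The plan is to unwind the definitions and reduce the claim to a lower-semicontinuity statement for the integral $I(x):=\int_{K_x}e^{-\psi(x,y)}\,dy$. Since $-\log$ is continuous and strictly decreasing, upper-semicontinuity of $M_K(\psi)=-\log I$ on $X$ is equivalent to lower-semicontinuity of $I$. So given a sequence $x_n\to x$ in $X$, the goal is to show $\liminf_n I(x_n)\ge I(x)$.

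First I would rewrite $I(x)=\int_{\mathbb{R}^m}\mathbf{1}_{K_x}(y)\,e^{-\psi(x,y)}\,dy$, extending $\psi$ by $+\infty$ off $K$ so that $e^{-\psi}$ vanishes there. Because $\psi$ is bounded below and $K$ is bounded, the integrands $f_n(y):=\mathbf{1}_{K_{x_n}}(y)\,e^{-\psi(x_n,y)}$ are uniformly bounded and supported in a common compact subset of $\mathbb{R}^m$, so each $I(x_n)$ is finite. The key pointwise claim is
\[
\liminf_n f_n(y) \ \ge\ \mathbf{1}_{K_x}(y)\,e^{-\psi(x,y)} \quad\text{for almost every } y.
\]
I would verify this on the set $\Int(K_x)$, whose complement in $K_x$ is contained in $\partial K_x$ and therefore has Lebesgue measure zero (as $K_x$ is convex with nonempty interior). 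Fix $y\in\Int(K_x)$: the hypothesis $(\Int K)_x=\Int(K_x)$ forces $(x,y)\in\Int(K)\subset X\times\mathbb{R}^m$, and since $\Int(K)$ is open and $(x_n,y)\to(x,y)$ we conclude $(x_n,y)\in\Int(K)\subset K$ for $n$ large, giving $\mathbf{1}_{K_{x_n}}(y)=1$ eventually. Combined with upper-semicontinuity of $\psi$ at $(x,y)$, which yields $\liminf_n e^{-\psi(x_n,y)}\ge e^{-\psi(x,y)}$, this establishes the pointwise inequality on $\Int(K_x)$. Applying Fatou's lemma to the nonnegative sequence $\{f_n\}$ then gives
\[
\liminf_n I(x_n) \ \ge\ \int_{\mathbb{R}^m} \liminf_n f_n(y)\,dy \ \ge\ \int_{\mathbb{R}^m} \mathbf{1}_{K_x}(y)\,e^{-\psi(x,y)}\,dy \ =\ I(x),
\]
which is exactly the desired lower-semicontinuity of $I$.

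The step I expect to be most subtle is securing the pointwise inequality at points $y\in\Int(K_x)$: the condition $(\Int K)_x=\Int(K_x)$ is doing real work, since without it a point $y\in K_x$ could fail to lie in any $K_{x_n}$ (for instance if $K$ meets the fiber $\{x\}\times\mathbb{R}^m$ along a flat portion of $\partial K$), and convexity of $K_x$ together with nonempty interior is what makes the relative boundary $K_x\setminus\Int(K_x)$ negligible so those pathological points do not affect the integral. Once these two geometric facts are in hand, the rest is a clean Fatou argument enabled by the lower bound on $\psi$ and the boundedness of $K$.
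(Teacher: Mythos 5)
Your proof is correct, and it follows the same basic strategy as the paper's: both hinge on the identical use of the hypothesis $(\Int K)_x=\Int(K_x)$ --- a point $y\in\Int(K_x)$ gives $(x,y)\in\Int K$, hence $y\in K_{x'}$ for all $x'$ near $x$ --- after which Fatou's lemma together with upper-semicontinuity of $\psi$ yields lower semicontinuity of $x\mapsto\int_{K_x}e^{-\psi(x,y)}\,dy$. The difference is how the boundary of the fiber is handled. The paper fixes a compact convex $V\subset\Int(K_{x_0})$ with $\vol(K_{x_0}\setminus V)\le\epsilon$ (the closure of $tK_{x_0}$ for $t$ slightly less than $1$), finds a ball $B$ with $B\times V\subset\Int K$, applies Fatou over the fixed set $V$, and then uses the lower bound on $\psi$ to bound the discarded contribution $\int_{K_{x_0}\setminus V}e^{-\psi}$ by $C\epsilon$. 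You instead absorb the boundary into a null set: $K_x\setminus\Int(K_x)$ lies in the topological boundary of the convex set $K_x$, which is Lebesgue-null since $K_x$ has nonempty interior, so your pointwise inequality $\liminf_n \mathbf{1}_{K_{x_n}}e^{-\psi(x_n,\cdot)}\ge\mathbf{1}_{K_x}e^{-\psi(x,\cdot)}$ holds almost everywhere and Fatou applies directly on all of $\mathbb{R}^m$. This buys a small simplification: the $\epsilon$-approximation disappears, and the hypothesis that $\psi$ is bounded below is never really needed in your argument (you invoke it only to note finiteness of the integrals, which Fatou does not require), whereas in the paper's version that lower bound is exactly what controls the error term over $K_{x_0}\setminus V$.
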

  \begin{proof}
Fix $x_0\in X$ and let $\epsilon>0$.  Fix a point in $\Int(K_{x_0})$ which we may as well take to be $0$.   Let $\epsilon>0$ and choose a  compact $V\subset \Int (K_x)$ so that $\vol(V) \ge \vol(K_{x_0}) - \epsilon$ (this is possible as $K_{x_0}$ is convex, so we can take the closure of $tK_{x_0}$ for $t$ slightly less than 1).    Then using \eqref{eq:topologicalboundarycondition} we deduce there is a ball around $x_0$ so $B\times V\subset \Int(K)$.  

 Then 
 \begin{align*}
 \liminf_{x\to x_0} \int_{K_x} e^{-\psi(x,y)} dy &\ge  \liminf_{x\to x_0} \int_{V} e^{-\psi(x,y)} dy\\
 & \ge  \int_{V}  \liminf_{x\to x_0} e^{-\psi(x,y)} dy \text{ by Fatou's Lemma } \\
  & \ge  \int_{V}  e^{-\psi(x_0,y)} dy \text{ as } \psi \text{ is upper-semicontinuous} \\
  &  = \int_{K_{x_0}} e^{-\psi(x_0,y)} dy  + \int_{K_{x_0}\setminus V} e^{-\psi(x_0,y)} dy  \\
  & \ge \int_{K_{x_0}} e^{-\psi(x_0,y)} dy - C\epsilon \text{ as } \psi \text{ is bounded from below.}
  \end{align*}
   
 Thus $x\mapsto \int_{K_x} e^{-\psi(x,y)} dy$ is lower-semicontinuous at $x_0$, and since $x_0$ was arbitrary it is lower semicontinuous on $X$.    Thus $M_{K}(\psi)$ is upper-semicontinuous.
    \end{proof}

\begin{remark}
The previous lemma is far from optimal.  For instance we can produce examples of unbounded sets $K$ for which $M_K(\psi)$ is upper-semicontinuous as follows.   Suppose that $K_n$ is an increasing sequence of subset of $X\times \mathbb R^m$ and suppose $\psi$ is defined on $K:=\bigcup K$ and for each $x$ that $\int_{K_x} e^{-\psi(x,y)} dy$ exits.     Then $M_{K_n}(\psi)$ decreases pointwise to $M_{K}$.    So if each $M_{K_n}$ for $n$ sufficiently large is upper semicontinuous on some open $U\subset X$ then $M_{K}$ is upper-semicontinuous on $U$.
\end{remark}

\subsection{The Brunn-Minkowski Theorem}

We continue to assume $X\subset \mathbb R^n$ is open.


\begin{theorem}[Brunn-Minkowski Theorem for $F$-subharmonicity]\label{thm:BMI}
Assume that $F\subset \Sym^2(\mathbb R^n)$ is convex Dirichlet set that is a cone over $0$.   Suppose that $K\subset X\times \mathbb R^m$ is closed and bounded and such that  there is an $F\star \mathcal P$-subharmonic function $\rho$ on a neighbourhood of $K$ such that $K = \{\rho \le 0\}$.  Then  the upper-semicontinuous regularization of $B_K$ is $F$-subharmonic.
\end{theorem}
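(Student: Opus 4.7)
The plan is to deduce the statement from Prekopa's Theorem (Theorem \ref{thm:prekopageneral}) by approximating the characteristic function of $K$ by $e^{-\psi_n}$ for a suitable increasing sequence of upper-semicontinuous $F\star \mathcal P$-subharmonic functions on a cylinder $X\times V$, and then passing to the limit using the closure of $F$-subharmonicity under upper-semicontinuous suprema from Proposition \ref{prop:basicproperties}(4).

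Since $F$-subharmonicity is local we may assume $X$ is bounded so that $K$ is compact. Fix an open bounded convex set $V\subset \mathbb R^m$ containing the projection of $K$ to $\mathbb R^m$. The natural building block on the given neighbourhood $U$ of $K$ is $n\rho^+ := n\max(\rho,0)$, which is $F\star \mathcal P$-subharmonic on $U$: indeed $F$ is a cone over $0$ so $n\rho$ is $F\star \mathcal P$-subharmonic for $n\ge 0$, the constant $0$ is $F\star \mathcal P$-subharmonic (since $0\in F\star \mathcal P$), and the maximum of two $F\star \mathcal P$-subharmonic functions is $F\star \mathcal P$-subharmonic by Proposition \ref{prop:basicproperties}(1). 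To extend $n\rho^+$ globally, pick a compact neighbourhood $W$ of $K$ with $W\subset U$ and a globally defined $F\star \mathcal P$-subharmonic barrier $g$ on $\mathbb R^n\times \mathbb R^m$ satisfying $g\le 0$ on $K$ and $g\ge \sup_W \rho^+$ on $\partial W$. Since $F$ is a Dirichlet cone over $0$ one has $\mathcal P\subset F$, and hence convex functions are $F\star \mathcal P$-subharmonic; a convex quadratic such as $A\|y-y_0\|^2 + B\|x-x_0\|^2 - C$ provides such a $g$ once $W$ is chosen thick enough inside $U$. Setting
$$\psi_n(x,y):=\begin{cases} \max\bigl(n\rho^+(x,y),\,n g(x,y)\bigr) & (x,y)\in U, \\ n g(x,y) & (x,y)\in (X\times V)\setminus W, \end{cases}$$
the two definitions agree on the overlap $U\setminus W$ (because $g\ge \rho^+$ there by construction), so $\psi_n$ is a globally defined upper-semicontinuous $F\star \mathcal P$-subharmonic function on $X\times V$ that is increasing in $n$, equals $0$ on $K$, and tends to $+\infty$ pointwise off $K$.

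Applying Theorem \ref{thm:prekopageneral} to each $\psi_n$, the marginal
$$M(\psi_n)(x):=-\log\int_V e^{-\psi_n(x,y)}\,dy$$
is $F$-subharmonic on $X$. Since $\psi_n$ increases pointwise, $e^{-\psi_n}$ decreases almost everywhere to $\chi_K$, and as $\psi_n\ge 0$ the integrand is dominated by $1$ on the bounded set $V$; dominated convergence gives $\int_V e^{-\psi_n(x,y)}\,dy\searrow\vol(K_x)$ for every $x\in X$. Thus $M(\psi_n)\nearrow B_K$ pointwise on $X$, and Proposition \ref{prop:basicproperties}(4) concludes that the upper-semicontinuous regularization of $\sup_n M(\psi_n)=B_K$ is $F$-subharmonic, as required.

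The main technical obstacle is the construction of the global barrier $g$: it must be $F\star \mathcal P$-subharmonic, satisfy $g\le 0$ on $K$, and dominate $\rho^+$ on $\partial W$ for a compact neighbourhood $W$ of $K$ sitting inside $U$. Compactness of $K$, the inclusion $\mathcal P\subset F$ (which enables convex quadratic barriers), and the option of shrinking $U$ so that a sufficiently thick $W$ fits inside are the essential ingredients; in pathological cases one may instead take $g$ to be a Perron-type globally $F\star \mathcal P$-subharmonic envelope of $\rho$ itself.
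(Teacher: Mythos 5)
Your overall strategy (approximate by $\max\{n\rho,0\}$, apply Theorem \ref{thm:prekopageneral}, pass to the increasing limit and invoke Proposition \ref{prop:basicproperties}(4)) is the right one, but the globalization step is where the argument breaks down. You need to extend $n\rho^+$ from the given neighbourhood $U$ of $K$ to a full cylinder $X\times V$, and your barrier $g$ is required to be convex, to satisfy $g\le 0$ on $K$, and to satisfy $g\ge \sup_W \rho^+>0$ on $\partial W$. This is impossible in general: a convex function that is $\le 0$ on $K$ is $\le 0$ on the convex hull of $K$, and $K$ need not be convex as a subset of $X\times\mathbb R^m$ --- only its slices $K_x$ are convex. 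For example, with $F=F_{sub}$, $n=2$, $m=1$, the function $\rho(x,y)=(y-h(x))^2-1$ with $h$ harmonic is $F_{sub}\star\mathcal P$-subharmonic, and $K=\{\rho\le 0\}$ is a curved slab whose convex hull is far larger than any thin neighbourhood $U$; since the hull is connected, contains $K\subset\Int(W)$ and points outside $W$, it must meet $\partial W$, where $g$ would have to be simultaneously $\le 0$ and $\ge \sup_W\rho^+>0$. You cannot repair this by ``choosing $W$ thick enough inside $U$'' because $U$ is given and may be thin, and the fallback ``Perron-type envelope of $\rho$'' is not substantiated (no argument that such an envelope exists, is $F\star\mathcal P$-subharmonic globally, vanishes on $K$ and dominates $\rho^+$ where needed). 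A secondary, smaller issue: even granting $g$, the agreement of your two formulas requires $g\ge\rho^+$ on all of $U\setminus W$, which does not follow from $g\ge\sup_W\rho^+$ on $\partial W$ alone, since $\rho^+$ can exceed $\sup_W\rho^+$ outside $W$.

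The paper avoids the extension problem altogether by localizing in the $x$-variable rather than gluing: since $F$-subharmonicity of the usc regularization of $B_K$ is a local statement on $X$, fix $x_0$, use compactness of $K_{x_0}$ to choose a bounded open convex $V$ with $K_{x_0}\subset V$ and $\overline V$ contained in the slice of the domain of $\rho$ over $x_0$, and then shrink $X$ to a small ball around $x_0$ so that (i) $X\times \overline V$ lies inside the domain of $\rho$ (tube-lemma argument) and (ii) $K_x\subset V$ for all $x$ in the shrunk $X$ (using that $K$ is closed and bounded). On this cylinder $\psi_n:=\max\{n\rho,0\}$ is already globally defined and $F\star\mathcal P$-subharmonic (here $0\in F\star\mathcal P$ because $F$ is a closed cone, and one uses the maximum property), so Theorem \ref{thm:prekopageneral} applies directly and $M_{X\times V}(\psi_n)\nearrow B_K$ exactly as in your limiting argument. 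Replacing your barrier construction by this localization closes the gap.
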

\begin{proof}
Observe first that fixed $x\in X$ the function $y\mapsto \rho(x,y)$ is  convex and $K_x  = \{ y: \rho(x,y)\le 0\}$, which shows that $K_x\subset \mathbb R^m$ is a bounded convex subset of $\mathbb R^m$.  

Say that $\rho$ is defined on an open $W$ continaining $K$ and fix $x_0\in X$.  Then $K_{x_0}\subset W_{x_0}\subset \mathbb R^m$.  Since $K_{x_0}$ is compact we can find a convex $V_{x_0}$ so $K_{x_0} \subset V_{x_0}\subset W_{x_0}$,  Then as $K$ is closed and bounded we deduce that $K_{x}\subset V_{x_0}$ for all $x$ in some small ball around $x_0$.      Thus, since we can shrink $X$, there is no loss in assuming that there is a convex set $V\subset \mathbb R^m$ such that $\rho$ is defined on $X\times V$.

Now consider
$$\psi_n = \max\{ n\rho, 0\}$$
which is $F\star \mathcal P$-subharmonic.  

Thus by Prekopa's Theorem (Theorem \ref{thm:prekopageneral}) $M_{X\times V}(\psi_n)$ is $F$-subharmonic.  But $\psi_n$ is identically 0 on $K$ and increases to $\infty$ on $X\times V\setminus K$ as $n$ increases,  and thus $M(\psi_n)$ increases to $B_K$.    Hence the upper semicontinuous regularization of $B_K$  is $F$-subharmonic.
 \end{proof}

\begin{proof}[Proof of Theorem \ref{thm:BMI:intro}]
Combine Lemma \ref{lem:semicontinuousi} and Theorem \ref{thm:BMI} since the former guarantees that $B_K$ is upper-semicontinuous.
\end{proof}

\begin{remark}
\begin{enumerate}

\item We do not know if for such sets $K$ the function $B_K$ is automatically upper-semicontinuous,  or what reasonable assumptions on $F$ and $K$ will guarantee this.  When $F=\mathcal P$ this holds as then $K$ is convex, and we will later see examples where this holds when $F=F_{sub}$.
\item This Brunn-Minkowski theorem extends easily to other sets in the following way.  Suppose $K=\bigcup K_n$ is bounded where $K_n$ are increasing.   Assume each $B_{K_n}$ is $F$-subharmonic on $X$ (instance $K_n$ could satisfy the hypothesis of the above theorem).  Then $B_K$ is $F$-subharmonic on $X$  (for locally in this set we have that $B_{K_n}$ decreases pointwise to $B_{K}$).

\item As is clear from the proof, we actually have another version of Prekopa's theorem namely under the same assumptions on $K$,  if $\psi$ is $F\star \mathcal P$-subharmonic on a neighbourhood of $K$ then $M_K(\psi)$ is $F$-subharmonic on $X$ (to see this run the same argument but with $\psi_n = \max\{ n\rho, \psi\}$). 

\item In particular one can replace the Euclidean volume in the Brunn-Minkowski statement with any measure that is log-concave (i.e.a measure that is is $e^{-u}$ times the Lebesgue measure for some convex function $u:\mathbb R^m\to \mathbb R)$ by using instead $\psi_n = \max \{n\rho, u\}$ in the above proof.

\end{enumerate}
\end{remark}

It is possible to relax the condition that $F$ is a cone over $0$ at the cost of a weaker conclusion.  



\begin{theorem}[Brunn-Minkowski Theorem for $F$-subharmonicity II]\label{thm:BMII}

Assume that $F\subset \Sym^2(\mathbb R^n)$ is convex Dirichlet set. Suppose that $K\subset X\times \mathbb R^m$ is closed and bounded and such that  there is an $\overrightarrow{F\star \mathcal P}$-subharmonic function $\rho$ on a neighbourhood of $K$ such that $K = \{\rho \le 0\}$.  Assume that $B_K$ is upper-semicontinuous and $v$ is $F$-subharmonic on $X$.  Then $v+B_K$ is $F$-subharmonic.

\end{theorem}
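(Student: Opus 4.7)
The plan is to adapt the proof of Theorem \ref{thm:BMI} to the weaker hypothesis that $\rho$ is only $\overrightarrow{F\star \mathcal P}$-subharmonic.  After shrinking $X$ we may assume, exactly as in the proof of Theorem \ref{thm:BMI}, that $\rho$ is defined on $X\times V$ for some bounded open convex $V\subset \mathbb R^m$.  In the cone case one applied Prekopa to $\psi_n = \max\{n\rho,0\}$, using that $0\in F\star\mathcal P$ and that $F\star \mathcal P$ is closed under positive scaling.  Neither property is available here, but both become available after shifting by $v$.  Accordingly I set
$$\psi_n(x,y) := v(x) + \max\{n\rho(x,y),0\}.$$

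To see $\psi_n$ is $F\star \mathcal P$-subharmonic, first note that the pullback $(x,y)\mapsto v(x)$ is $F\star \mathcal P$-subharmonic by Proposition \ref{prop:slices} (its restriction along any affine slice in the $x$-direction is $v$, and in the $y$-direction is a constant).  Next, $n\rho$ is $\overrightarrow{F\star \mathcal P}$-subharmonic because $\overrightarrow{F\star \mathcal P}$ is a cone containing $0$, and hence so is $\max\{n\rho,0\}$ by the maximum property (Proposition \ref{prop:basicproperties}(1)).  The crucial step is then the following sum rule: \emph{if $G$ is a convex Dirichlet set, $u\in G(X)$ and $w\in \overrightarrow{G}(X)$, then $u+w\in G(X)$.}  For smooth $u,w$ this is immediate from the convex-analytic identity $G+\overrightarrow{G}\subset G$ (where $\overrightarrow{G}$ appears as the recession cone of the convex set $G$); in general one approximates $u$ and $w$ via sup-convolution followed by mollification and invokes Proposition \ref{prop:basicproperties}(2)--(3).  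Applied with $G = F\star \mathcal P$ this shows $\psi_n\in (F\star \mathcal P)(X\times V)$.

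Now Prekopa's Theorem (Theorem \ref{thm:prekopageneral}) gives
$$M_{X\times V}(\psi_n)(x) = v(x) - \log \int_V e^{-\max\{n\rho(x,y),0\}}\, dy \in F(X).$$
As $n\to\infty$, $\max\{n\rho,0\}$ tends to $0$ on $K$ and to $+\infty$ on $V\setminus K$, so the sequence $M_{X\times V}(\psi_n)$ increases pointwise to $v+B_K$.  Since $v+B_K$ is upper-semicontinuous by hypothesis, Proposition \ref{prop:basicproperties}(4) applied to this increasing family yields $v+B_K\in F(X)$.  The main obstacle is the sum rule in the upper-semicontinuous setting: its smooth version is a one-line observation, but propagating it to general usc $F$-subharmonic functions requires care, combining sup-convolution (which gives a decreasing sequence of semi-convex approximations) with mollification (which converts these to smooth approximations converging locally uniformly), in the spirit of the approximation steps already used in the proof of Theorem \ref{thm:prekopageneral}.
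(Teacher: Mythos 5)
Your proposal is correct, but it takes a genuinely different route from the paper's. You feed the weight $v$ additively into the integrand, setting $\psi_n = v + \max\{n\rho,0\}$, and the burden of the proof falls on an addition lemma: for a convex Dirichlet set $G$, $u\in G(X)$ and $w\in\overrightarrow{G}(X)$ imply $u+w\in G(X)$. That lemma is true and your sketch of it works: for closed convex $G$ the ray set $\overrightarrow{G}$ coincides with the recession cone (a closed convex set containing a ray in direction $A$ contains the parallel ray from every point, so $\overrightarrow{G_B}\subset 0^+G\subset\overrightarrow{G_B}$ for $B\in G$), whence $G+\overrightarrow{G}\subset G$ and the smooth case follows; the general case then follows by sup-convolution plus mollification together with Proposition \ref{prop:basicproperties}(2)--(3), noting that mollification preserves both classes because $F\star\mathcal P$ and its recession cone are convex, and that the sup-convolution should be localized to relatively compact $X'\times V'$ so the functions are bounded above (as in Appendix \ref{appendix:supconvolution}). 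The paper instead avoids ever adding two non-smooth viscosity functions: after first reducing to $v$ bounded below, it picks a smooth quadratic $q$ with $\Hess q = B\in F$ and $q<v$, notes that $n\rho + q$ is $F\star\mathcal P$-subharmonic (adding a smooth function merely shifts test functions, and one needs exactly the same ray-set/convexity fact $B+\overrightarrow{F\star\mathcal P}\subset F\star\mathcal P$ that you invoke), and then takes $\psi_n=\max\{n\rho+q,\,v\}$, which equals $v$ on $K$ and blows up off $K$, so the maximum property does the work your addition lemma does. Your route buys a reusable sum theorem and a very transparent limit, since $M_{X\times V}(\psi_n)=v-\log\int_V e^{-\max\{n\rho,0\}}dy$ visibly increases to $v+B_K$; its cost is the approximation argument, which is more delicate than anything the paper needs at this point. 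The final step (increasing family of $F$-subharmonic marginals, $\sup^*$, and the upper-semicontinuity hypothesis on $B_K$) is the same as in the paper's proofs of Theorems \ref{thm:BMI} and \ref{thm:prekopageneral}.
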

\begin{proof}
Let $v'$ be any $F$-subharmonic function on $X$.  Replacing $v$ with $\max\{ v, v'-j\}$ for $j\in \mathbb N$ shows we may without loss of generality assume that $v$ is bounded from below.

So fix a $v\in F(X)$ bounded from below and fix any $B\in F$.    Let $q(x)$ be a quadratic form on $X$ with Hessian always equal to $B$.  Then shrinking $X$ if necessary and subtracting a constant from $q$ if necessary we may arrange so $q<v$ on $X$.

By definition $\overrightarrow{F\star \mathcal P}$, there exists a $n_0$ such that at any point $(x,y)\in K$ we have $n \Hess_{(x,y)}\rho + B\in F\star \mathcal P$ for all $n\ge n_0$.  Since $K$ is compact we can take this $n$ uniformly over all $K$ implying that $n\rho +q$ is $F\star \mathcal P$-subharmonic. The function $\psi_n = \max \{ n \rho + q, v\}$ is $F\star \mathcal P$-subharmonic.  Observe that $\psi_n$ increases pointwise to $\infty$ on $K^c$ and  identically $v$ on $K$.   The proof now proceeds as in that for Theorem \ref{thm:BMI} upon noticing that $M_K(v) = e^{-v(x)} \vol(K_x)$ so $-\log M_K(v) = v + B_K$.
\end{proof}

\section{The Minimum Principle}\label{sec:minimumprinciple}

That the minimum principle can be deduced from Preokopa's Theorem is well known, and for convenience we include the full statement and proof.

\begin{theorem}\label{thm:minimumprinciple}
Let $X\subset \mathbb R^n$ be open and assume that $F\subset \Sym^2(\mathbb R^n)$ is a convex Dirichlet set that is a cone over $0$.  Let $V\subset \mathbb R^m$ be convex and suppose that $\psi$ is $F\star \mathcal P$-subharmonic on $X\times V$.  Then the function 
$$x\mapsto \inf_y \psi(x,y)$$
is $F$-subharmonic.
\end{theorem}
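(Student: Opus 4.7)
The plan is to realize $x\mapsto\inf_y\psi(x,y)$ as a decreasing pointwise limit of $F$-subharmonic functions that Theorem \ref{thm:prekopageneral} already hands us, and then invoke the decreasing-limit clause of Proposition \ref{prop:basicproperties}(2).  First I would reduce to the case where $V$ is bounded by exhausting $V$ with an increasing sequence of bounded open convex subsets $V_k\nearrow V$, noting that $x\mapsto \inf_{y\in V_k}\psi(x,y)$ decreases pointwise to $x\mapsto\inf_{y\in V}\psi(x,y)$, so that the general case follows from the bounded case.

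Assume then that $V$ is bounded.  For each $t>0$, $t\psi$ is still $F\star\mathcal P$-subharmonic because $F$, and therefore $F\star\mathcal P$, is a cone over $0$; applying Theorem \ref{thm:prekopageneral} to $t\psi$ gives that
$$\phi_t(x)\;:=\;-\log\int_V e^{-t\psi(x,y)}\,dy$$
is $F$-subharmonic on $X$.  Subtracting the constant $\log\vol(V)$ and then dividing by the positive scalar $t$ both preserve $F$-subharmonicity (the latter because $F$ is a cone over $0$), so
$$u_t(x)\;:=\;-\frac{1}{t}\log\int_V e^{-t\psi(x,y)}\,\frac{dy}{\vol(V)}$$
is $F$-subharmonic for every $t>0$.

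The point of the normalisation is that with the probability measure $d\mu=dy/\vol(V)$ we have $-u_t(x)=\log\|e^{-\psi(x,\cdot)}\|_{L^t(\mu)}$, and the standard monotonicity of $L^p$-norms for probability measures (Jensen applied to the concave map $s\mapsto s^{p/q}$ for $0<p<q$) shows this is nondecreasing in $t$; hence $u_t$ is nonincreasing in $t$.  A standard Laplace calculation then gives $\lim_{t\to\infty}u_t(x)=\inf_{y\in V}\psi(x,y)$ pointwise: one direction is the trivial bound $e^{-t\psi}\le e^{-t\inf_y\psi}$, and the other comes from restricting the integral to a small ball around an almost-minimiser $y_0\in V$ on which $\psi(x,\cdot)<\inf_y\psi(x,y)+\epsilon$, using that $y\mapsto\psi(x,y)$ is convex on the open set $V$ (Proposition \ref{prop:slices}) and hence continuous.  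Choosing $t_k\to\infty$, Proposition \ref{prop:basicproperties}(2) applied to the decreasing sequence $u_{t_k}$ then shows $x\mapsto\inf_y\psi(x,y)$ is $F$-subharmonic.

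The step I expect to be the main obstacle is securing honest monotonicity of the approximating sequence: without the probability-measure renormalisation the unnormalised family $\tfrac{1}{t}\phi_t$ need not be monotone in $t$, so the decreasing-limits clause of Proposition \ref{prop:basicproperties} would not apply directly.  Once the renormalisation is in place, both the monotonicity and the Laplace limit are routine, and the degenerate case $\inf_y\psi(x,\cdot)\equiv -\infty$ poses no difficulty since then $u_t(x)\equiv -\infty$ already.
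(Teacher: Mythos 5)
Your argument is correct and is essentially the paper's own proof (Berndtsson's trick): scale $\psi$ by a large parameter, apply Theorem \ref{thm:prekopageneral}, rescale back using the cone property, and pass to the $t\to\infty$ limit via the decreasing-sequences clause of Proposition \ref{prop:basicproperties}. The only differences are technical: the paper handles unbounded $V$ and integrability by adding $\frac{1}{p}\|y\|^2$ and normalizing $\psi\ge 0$, whereas you exhaust $V$ by bounded convex sets and renormalize by $\vol(V)$ to work with a probability measure, which if anything makes the monotonicity in $t$ cleaner.
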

\begin{proof}
This follows the argument given in \cite{Berndtsson_Prekopa}.  Replacing $\psi$ with $\max\{\psi,-j\}$ and letting $j\to \infty$ we may assume that $\psi$ is bounded from below, and then adding a constant we may assume $\psi\ge 0$.  Now for $p\ge 1$ set $\psi_p: = \psi + \frac{1}{p} |y|^2$ (which is again $F\star\mathcal P$-subharmonic since $F$ is assumed to be a cone) and put $p\psi_p$ into Prekopa's theorem to deduce that $x\mapsto \log M(p\psi_p)$ is $F$-subharmonic where
$$M(p\psi_p) = -\log \int_{\mathbb R^m} e^{-p\psi(x,y) - |y|^2}dy = -\frac{1}{p}\log \| e^{-\psi_p}\|_{L^p}$$
(and we observe this is is finite for each $x$  even if $V$ is unbounded due to the $|y|^2$ term).    Then using that the $L^p$ norm converges to the $L^{\infty}$ norm as $p\to \infty$ we conclude that $pM(p\psi_p)$ (which is still $F$-subharmonic) decreases to $-\log \|e^{-\psi}\|_{L^{\infty}} = \inf_y \psi$ as $p\to \infty$, completing the proof.
\end{proof}

\section{A Simple Explicit Example}\label{sec:example}

With variables $(x_1,x_2)\in \mathbb R^2$ and $y\in \mathbb R$, consider then the quadratic
$$\psi(x_1,x_2,y) = (x_1,x_2,y) \begin{pmatrix} \lambda & \tau & a \\ \tau & \mu & b \\ a &b &1 \end{pmatrix} \begin{pmatrix} x_1 \\x_2 \\y \end{pmatrix}.$$

Observing that $\psi_{yy} = 1$, by Proposition \ref{prop:characterization1} we know $\psi$ is $F_{sub}\star \mathcal P$ subharmonic if and only if 
$$0\le \tr( \psi_{xx} - \psi_{xy}^T\psi_{yy}^{-1} \psi_{xy}) = \tr\left( \begin{pmatrix} \lambda & \tau \\ \tau &\mu\end{pmatrix} - \begin{pmatrix}a^2 &ab\\ ab & b^2 \end{pmatrix}\right)$$
that is, if and only if, 
\begin{equation} \lambda + \mu - a^2 -b^2\ge 0.\label{ex:subharmonicproductexplicit}\end{equation}

Now fix $\kappa>0$ and set
$$K : = \{ \psi\le \kappa\} \subset \mathbb R^3.$$
Let $\pi:\mathbb R^3\to \mathbb R^2$ be the projection to the $(x_1,x_2)$ plane.  Then for each $(x_1,x_2)\in \pi(K)$ the slice $K_{(x_1,x_2)}$ is an interval $[y^{-}, y^+]$ given by the roots of the quadratic $y\mapsto \psi(x_1,x_2,y)$.  Explicitly
$$y^{\pm} = -(ax_1 + bx_2) \pm \sqrt{ (ax_1 +bx_2)^2 - (\lambda x_1^2 + \mu x_2^2 + 2\tau x_1 x_2 - \kappa)}$$
and so $V(x_1,x_2) := \vol(K_{(x_1,x_2)})$ is given by
$$V(x_1,x_2)=y^+ - y^- = 2\sqrt{ (ax_1 +bx_2)^2 - (\lambda x_1^2 + \mu x_2^2 + 2\tau x_1 x_2 - \kappa)}.$$
Then
$$B_{K}(x_1,x_2):=-\log V(x_1,x_2) =- \log 2 - \frac{1}{2} \log  W$$
where
$$W:= (ax_1 +bx_2)^2 - (\lambda x_1^2 + \mu x_2^2 + 2\tau x_1 x_2 - \kappa).$$
Now a direct computation gives
$$2B_{x_1x_1} = W^{-2} (W_{x_1})^2-W^{-1} W_{x_1x_1}   = W^{-1} (\lambda-a^2) + W^{-2}  (W_{x_1})^2$$
and similarly for the $x_2$ variable, and thus
$$ \Delta B_{K} = \frac{1}{2W}  (\lambda + \mu -a^2-b^2)+ \frac{1}{2}W^{-2} ((W_{x_1})^2 + (W_{x_2})^2).$$
Since the second term in this expression is automatically positive, we see that $\psi$ being $F\star \mathcal P$-subharmonic (which is equivalent to \eqref{ex:subharmonicproductexplicit}) implies that $B_K$ is subharmonic as expected from the Brunn-Minkowski Theorem.


\section{Prekopa's Theorem for Plurisubharmonic Functions}\label{sec:Bo}

We now show our version of Prekopa's Theorem proves the following statement for plurisubharmonic functions, originally due to Berndtsson \cite{Berndtsson_Prekopa}.

\begin{theorem}
Let $X\subseteq \mathbb{C}^n$ be open and assume that $\phi$ is plurisubharmonic on $X\times \mathbb{C}^m$. Let $w_j=x_j+iy_j$ be the coordinates on $\mathbb{C}^m$ and assume that  $\psi(z,x+iy)=\psi(z,x)$.  Then $$z\mapsto -\log\int_{\mathbb{R}^m}e^{-\psi(z,x)}dx$$ is plurisubharmonic in $X$.  
\end{theorem}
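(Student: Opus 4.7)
\emph{Proof plan.} The strategy is to apply the general Prekopa Theorem (Theorem~\ref{thm:prekopageneral}) with $F = F_{\mathrm{plsh}} \subset \Sym^2(\mathbb{R}^{2n})$, the Dirichlet set whose $F$-subharmonic functions are exactly the plurisubharmonic functions on $\mathbb{C}^n \simeq \mathbb{R}^{2n}$. This $F_{\mathrm{plsh}}$ is a closed convex cone over the origin, and $F_{\mathrm{plsh}} + \mathcal{P}_{2n} \subset F_{\mathrm{plsh}}$ is immediate from the fact that a real positive-semidefinite matrix has nonnegative complex part; so $F_{\mathrm{plsh}}$ is a convex Dirichlet set of the form required in Theorem~\ref{thm:prekopageneral}.

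Viewing $\psi(z,x)$ as an upper-semicontinuous function on $X \times \mathbb{R}^m$ (using the hypothesis that $\psi$ is independent of $y=\Im w$), I would verify that $\psi$ is $F_{\mathrm{plsh}} \star \mathcal{P}$-subharmonic by checking the two slice conditions of Proposition~\ref{prop:slices}. For the convexity slice, fix $z$: then $w\mapsto \psi(z,w)$ is plurisubharmonic on $\mathbb{C}^m$ and independent of $\Im w$. Along any real line $x+\mathbb{R}v\subset \mathbb{R}^m$, the complexification $t+is\mapsto \psi(z,x+(t+is)v)=\psi(z,x+tv)$ is subharmonic on $\mathbb{C}$ with $\partial_s^2\equiv 0$, so $\partial_t^2\ge 0$ distributionally, and thus $t\mapsto \psi(z,x+tv)$ is convex. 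Hence $x\mapsto \psi(z,x)$ is convex in every direction, i.e. $\mathcal P$-subharmonic.

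The main step is the affine slice condition. Given any affine $\Gamma:\mathbb{R}^{2n}\to \mathbb{R}^m$, I would produce a complex-affine lift, that is, a complex-affine map $G:\mathbb{C}^n\to \mathbb{C}^m$ with $\Re G(z)=\Gamma(z_{\mathbb R})$ for all $z$. Such a lift exists because the real-linear map $\Hom_{\mathbb{C}}(\mathbb{C}^n,\mathbb{C}^m)\to \Hom_{\mathbb{R}}(\mathbb{R}^{2n},\mathbb{R}^m)$ sending $A$ to $z\mapsto \Re(Az)$ is injective (if $\Re(Az)=0$ for all $z$, then replacing $z$ by $iz$ gives $\Im(Az)=0$, hence $A=0$) and both sides have the same real dimension $2nm$, so it is an isomorphism; the constant term is absorbed as the translation of $G$. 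Then by independence on the imaginary part,
$$
\psi\bigl(z,\Gamma(z_{\mathbb R})\bigr) \;=\; \psi\bigl(z,\Re G(z)\bigr) \;=\; \psi\bigl(z,G(z)\bigr),
$$
which is the pullback of the plurisubharmonic function $\psi$ by the holomorphic map $z\mapsto (z,G(z))$, and is therefore plurisubharmonic on $X$; equivalently, $F_{\mathrm{plsh}}$-subharmonic.

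Having verified both slice conditions, Proposition~\ref{prop:slices} gives that $\psi$ is $F_{\mathrm{plsh}}\star \mathcal P$-subharmonic on $X\times \mathbb{R}^m$. Applying Theorem~\ref{thm:prekopageneral} then immediately yields that $z\mapsto -\log\int_{\mathbb{R}^m} e^{-\psi(z,x)}\,dx$ is $F_{\mathrm{plsh}}$-subharmonic on $X$, i.e. plurisubharmonic. The only mildly subtle point is the existence and use of the holomorphic lift $G$; once in hand, the pullback identity together with Theorem~\ref{thm:prekopageneral} does the rest with no further work.
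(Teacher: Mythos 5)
Your proposal is correct, and its central trick is the same as the paper's: lift the real affine map $\Gamma$ to a complex affine map with the right real part, use the independence of $\psi$ from $\operatorname{Im} w$ to replace the real affine slice by a holomorphic slice, and then feed the resulting $F\star\mathcal P$-subharmonicity into the general Prekopa Theorem (Theorem \ref{thm:prekopageneral}) via the slice characterization (Proposition \ref{prop:slices}). The one genuine difference is the choice of Dirichlet set. The paper first reduces to $X\subseteq\mathbb C$ by restricting to complex lines (plurisubharmonicity being checkable on lines, and the marginal commuting with such restrictions), and then only needs $F_{sub}\subset\Sym^2(\mathbb R^2)$, which is already among the paper's listed examples where viscosity $F$-subharmonicity is identified with the classical notion. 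You instead work in all complex dimensions at once with $F_{\mathrm{plsh}}\subset\Sym^2(\mathbb R^{2n})$; this is cleaner in that it avoids the reduction step, but it quietly relies on the identification of viscosity $F_{\mathrm{plsh}}$-subharmonic functions with plurisubharmonic functions on $\mathbb C^n$, a standard Harvey--Lawson fact that is not established in this paper (your verification that $F_{\mathrm{plsh}}$ is a convex Dirichlet set, your dimension-count construction of the complex-affine lift, and your convexity-of-slices argument are all fine, and are at the same level of ``classical facts'' the paper itself invokes). So: same key lemma and same key construction, with your version trading the paper's reduction to one complex variable for a direct appeal to the plurisubharmonic subequation.
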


\begin{proof}
First we note that since a function is plurisubharmonic if and only if it is subharmonic along all complex lines, if it is enough to consider $X\subseteq \mathbb{C}$. Another classical fact is that a function on $\mathbb{C}^m$ which is independent of $y$ is plurisubharmonic if and only if it is convex in $x$, so our hypotheses imply $\psi(z,x)$ is convex in $x$. 

Now let $\Gamma=\Gamma_0+p$ be an affine map from $\mathbb{C}$ to $\mathbb{R}^m$. It is easy to see that the linear map $\Gamma_0$ can be extended to a complex linear map $\Gamma_{0,\mathbb{C}}:\mathbb{C}\to \mathbb{C}^m$, and so $\Gamma$ extends to the complex affine map $\Gamma_{\mathbb{C}}:=\Gamma_{0,\mathbb{C}}+p$. Since $\phi$ is plurisubharmonic and independent of $y$ we get that $\psi(z,\Gamma(z))=\psi(z,\Gamma_{\mathbb{C}}(z))$ is subharmonic in $z$. We thus see that $\psi(z,x)$ is $F_{sub}\star \mathcal P$-subharmonic so our Prekopa Theorem applies, and the result follows.  \end{proof}

\section{Interpolation}\label{sec:interpolation}

\subsection{The Perron Envelope}

We are now ready to discuss the application to interpolation of convex functions and to interpolation of convex sets.   Let $X\subset\mathbb R^n$ be open, and $F$ be a convex Dirichlet set.   Suppose $\Omega\subset X$ is open and smooth bounded, and that we are given a continuous
$$\phi:\partial \Omega \times \mathbb R^m\to \mathbb R$$
such that $\phi_\tau(\cdot) = \phi(\tau,\cdot)$ is convex for each fixed $\tau\in \partial \Omega$.  The goal is to interpolate this to a family of convex functions on $\Omega\times \mathbb R^m$, which we achieve through the following envelope

\begin{equation}\Phi:= \sup \{ \zeta \in USC(\overline{\Omega} \times \mathbb R^m): \zeta|_{\Omega\times \mathbb R^m} \text{ is } F\star\mathcal P\text{-subharmonic and }\zeta|_{\partial \Omega\times \mathbb R^m} \le \phi \}\label{eq:perron}
\end{equation}
where $USC(X)$ denotes the set of upper-semicontinuous functions $X\to \mathbb R\cup \{-\infty\}$.

For any convex function $\eta$ we will write $\eta^*$ for its Legendre transform
$$\eta^*(u) = \sup_{y\in \mathbb R^m} \{ y\cdot u - \eta(y)\}$$

\begin{definition} \label{def:locallycomparable}We say that $\{\phi^*_{\tau}\}_{\tau \in \partial \Omega}$ is \emph{locally comparable} if 
\begin{enumerate}
\item for all $u_0\in \mathbb R^m$  we have $|\phi^*_{\tau}(u)-\phi^*_{\tau'}(u)|\le C$ uniformly over $\tau,\tau'\in \partial \Omega$ (in this definition we allow the possibility that $\phi^*_\tau(u)$ is infinite in which case the condition requires that $\phi_{\tau'}(u)$ is infinite for all $\tau'$).
\item for each $\tau\in \partial \Omega$ the set of $u$ such that $\phi_{\tau}^*$ is finite is open in $\mathbb R^m$ (and by (1) this set is independent of $\tau$)
\end{enumerate}
\end{definition}

\begin{proposition}\label{prop:dirichletinterpolationbasics}
Assume $\Omega$ is strictly $\overrightarrow{F}$-convex and strictly $\overrightarrow{\tilde{F}}$-convex and that $\{\phi^*_{\tau}\}_{\tau \in \partial \Omega}$ is locally comparable.   Then
$\Phi$ is $F\star \mathcal P$ subharmonic, and $\Phi|_{\partial \Omega\times \mathbb R^m} = \phi$.
\end{proposition}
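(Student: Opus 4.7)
The plan is to carry out the standard Perron--Dirichlet method, adapted to the product subequation $F \star \mathcal P$ on the non-compact product $\Omega \times \mathbb R^m$. The statement decomposes naturally into four verifications: (i) the competing family is non-empty and locally uniformly bounded above, (ii) the upper-semicontinuous regularization $\Phi^*$ is $F \star \mathcal P$-subharmonic, (iii) $\Phi^* \le \phi$ on $\partial \Omega \times \mathbb R^m$ (so that in fact $\Phi = \Phi^*$), and (iv) $\Phi \ge \phi$ on $\partial \Omega \times \mathbb R^m$.

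First I would address local boundedness, which is where the hypotheses enter most essentially. The locally comparable assumption guarantees that for any $u_0$ in the common open domain of finiteness $U_0$ of the $\phi_\tau^*$, there is a constant $C = C(u_0)$ with $\phi(\tau, y) \ge y\cdot u_0 - \phi_\tau^*(u_0) \ge y \cdot u_0 - C$ for all $\tau \in \partial\Omega$; in particular $\phi$ is locally uniformly bounded below. To turn this into an interior upper bound on competitors, I would invoke the strict $\overrightarrow{\tilde F}$-convexity of $\Omega$ to produce, on a neighborhood of $\overline\Omega \times \mathbb R^m$, a smooth function $g$ such that $-g$ is $\widetilde{F \star \mathcal P}$-subharmonic and $g \ge \phi$ on $\partial \Omega \times \mathbb R^m$. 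By the duality principle recorded in \S2.2, for any admissible $\zeta$ the sum $\zeta + (-g) = \zeta - g$ is then subaffine on $\Omega \times \mathbb R^m$, so by the maximum principle $\zeta \le g$ throughout, which is the required local upper bound. Proposition \ref{prop:basicproperties}(4) then gives $\Phi^* \in F \star \mathcal P(\Omega \times \mathbb R^m)$.

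For the boundary values, the upper barrier $g$ already gives $\Phi^* \le g$ on all of $\overline\Omega \times \mathbb R^m$. By localizing the construction near a chosen $\tau_0 \in \partial\Omega$ and letting the localization shrink, one arranges $g(\tau_0, y) \to \phi(\tau_0, y)$, and so $\Phi^*|_{\partial\Omega \times \mathbb R^m} \le \phi$. For the matching lower bound I would use the strict $\overrightarrow{F}$-convexity of $\Omega$ to construct, for each $\tau_0 \in \partial\Omega$, $y_0 \in \mathbb R^m$ and $\epsilon > 0$, a competitor $\zeta_{\tau_0, y_0, \epsilon}$ that is $F \star \mathcal P$-subharmonic on a neighborhood of $\overline\Omega \times \mathbb R^m$, dominated by $\phi$ on $\partial \Omega \times \mathbb R^m$, and satisfies $\zeta_{\tau_0, y_0, \epsilon}(\tau_0, y_0) \ge \phi(\tau_0, y_0) - \epsilon$. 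The building block is the affine function $(x, y) \mapsto y\cdot u_0 - \phi_\tau^*(u_0)$, which is automatically $F \star \mathcal P$-subharmonic since it is linear in $(x,y)$, modified by a large negative multiple of a smooth $F$-subharmonic function of the defining function of $\Omega$ that vanishes at $\tau_0$ (whose existence is exactly the content of strict $\overrightarrow F$-convexity, cf.\ \cite[Theorem 5.12]{HL_Dirichletduality}). This forces the competitor to stay below $\phi$ on the rest of the boundary while coming arbitrarily close to $\phi(\tau_0, y_0)$ at the chosen point, yielding $\Phi \ge \phi$ on $\partial\Omega \times \mathbb R^m$.

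The main obstacle is the barrier construction itself: the boundary $\partial\Omega \times \mathbb R^m$ is non-compact, so classical Harvey--Lawson barriers built only from the boundary geometry of $\Omega$ do not suffice --- one needs barriers with controlled growth in the $y$-variable that remain compatible with the convexity-in-$y$ half of $F \star \mathcal P$. The locally comparable hypothesis (Definition \ref{def:locallycomparable}) is precisely tailored to this issue: the $\tau$-uniform control of $\phi_\tau^*(u_0)$ on $U_0$ translates into $\tau$-uniform linear lower bounds for $\phi$ in $y$, and via Legendre duality these assemble into $\tau$-uniform growth control for $\phi$ itself, which is exactly what is needed to glue local Harvey--Lawson barriers on $\partial\Omega$ against the fibre directions. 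Once such barriers exist, the remainder of the argument is the standard Perron--Dirichlet template.
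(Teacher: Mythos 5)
Your overall template (regularize, show the regularization is admissible, then prove the two boundary inequalities separately) is the right one, and your lower bound argument is essentially the paper's: take the affine function in $y$ coming from a subgradient of $\phi_{\tau_0}$, tilt it downwards off $\tau_0$ by $C(\rho - \epsilon|x-\tau_0|^2)$ using the strict $\overrightarrow F$-convexity, and use local comparability to verify the resulting competitor stays below $\phi$ on all of $\partial\Omega\times\mathbb R^m$. You state this at a high level without addressing the delicate step — that the subgradient point $y_\tau$ for $\phi_\tau$ stays in a fixed ball as $\tau\to\tau_0$ — which is exactly where Definition~\ref{def:locallycomparable}(2) enters and where the paper has to work, but the idea is right.

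The upper bound, however, has a genuine gap. You propose a single global upper barrier $g$ on $\overline\Omega\times\mathbb R^m$ with $-g$ being $\widetilde{F\star\mathcal P}$-subharmonic and $g\ge\phi$ on $\partial\Omega\times\mathbb R^m$, then conclude $\zeta\le g$ by subaffinity of $\zeta-g$ and the maximum principle. Two problems. First, $\phi_\tau$ is an arbitrary convex function of $y$, so to have $g\ge\phi$ on the unbounded boundary you need $g$ to dominate convex functions of arbitrary growth; but the condition that $-g$ be $\widetilde{F\star\mathcal P}$-subharmonic rules out $g$ being uniformly strictly convex in $y$ (the lower-right block of $\Hess g$ must avoid $\Int\mathcal P$ somewhere), so there is no room for $g$ to grow as needed. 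Local comparability gives $\tau$-uniform \emph{lower} affine bounds on $\phi$ via $\phi_\tau(y)\ge y\cdot u_0 - \phi_\tau^*(u_0)$, not upper bounds, so it cannot supply the missing growth control. Second, even if such a $g$ existed, the maximum principle for a subaffine function on the noncompact $\Omega\times\mathbb R^m$ requires truncating to $\overline\Omega\times\overline B$ and then controlling $\zeta$ on the new boundary component $\overline\Omega\times\partial B$, which is the very bound you are trying to establish.

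The paper sidesteps both issues by adding to $\zeta$ a barrier $C(\rho(x)-\epsilon|x-\tau_0|^2)$ that depends only on $x$ and is $\tilde F$-subharmonic. The restriction of $\zeta$ to a slice $\overline\Omega\times\{y\}$ is $F$-subharmonic in $x$ by Proposition~\ref{prop:slices}, so the sum is subaffine on the compact $\overline\Omega$ for each fixed $y$, and the ordinary maximum principle applies slice by slice. One then only needs the barrier inequality $\phi(\tau,y)-C\epsilon|\tau-\tau_0|^2\le\phi(\tau_0,y)+\delta$ to hold for $y$ in a fixed small ball around $y_0$, which follows from continuity alone, with no global growth control on $\phi$. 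That per-slice argument (rather than a product barrier) is the key idea your proposal is missing.
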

\begin{proof}
This is a standard proof and will follow \cite[\S 6]{HL_Dirichletduality} closely (in fact the only reason that \cite[\S 6]{HL_Dirichletduality}  does not apply directly is that we are working on the unbounded set $\Omega\times\mathbb R^m$).



%


In detail, since we are assuming that $\Omega$ is strictly $\overrightarrow{\tilde{F}}$-convex, there is a globally defined $\overrightarrow{\tilde{F}}$-subharmonic defining function $\rho$ on $\overline{\Omega}$.   It is shown in \cite[(5.4)]{HL_Dirichletduality} that there exists an $\epsilon>0$ and $R>0$ such that $C(\rho - \epsilon|\tau-x|^2)$ is $\tilde{F}$-subharmonic on $\overline{\Omega}$ if $C\ge R$.  

Now fix $(\tau_0,y_0)\in \partial\Omega \times \mathbb R^m$ and let $\delta>0$ and $B$ be a ball around $y_0$.   By continuity of $\phi$ we can choose $C$  large  enough so that
$$\phi(\tau,y) + C(\rho - \epsilon|\tau-\tau_0|^2) = \phi(\tau,y) - C \epsilon|\tau-\tau_0|^2 \le \phi(\tau_0,y) + \delta \text{ for }  (\tau,y)\in \partial \Omega\times B.$$
Now let $\zeta$ be a function as in the right hand side of \eqref{eq:perron} and set $$w = \zeta + C(\rho - \epsilon |x-\tau_0|^2)$$ which is subaffine on $\Omega\times \mathbb R^m$ and upper-semicontinuous on $\overline{\Omega}\times \mathbb R^m$.  By the maximum principle, for each $y\in \mathbb R^m$ we have $\sup_{\overline{\Omega}\times \{y\}} w = \sup_{\partial \Omega \times \{y\}}w$.   So applying this for each $y\in B$ and using the previous inequality we have
$$\sup_{\overline{\Omega} \times B} w   = \sup_{\partial\Omega\times B} w  \le \phi(\tau_0,y) + \delta$$
or said another way
$$\zeta(x,y) + C (\rho - \epsilon |x-\tau_0|^2) \le \phi(\tau_0,y) + \delta \text{ for } (x,y)\in \overline{\Omega}\times B.$$
Taking the supremum over all such $\zeta$ yields
$$\Phi(x,y)+ C (\rho - \epsilon |x-\tau_0|^2) \le \phi(\tau_0,y) + \delta \text{ for } (x,y)\in \overline{\Omega}\times B$$
Now let $\Theta$ be the upper-semicontinuous regularization of $\Phi$.   So taking the upper semicontinuous regularization
$$\Theta(x,y)+ C (\rho - \epsilon |x-\tau_0|^2) \le \phi(\tau_0,y) + \delta \text{ for } (x,y)\in \overline{\Omega}\times B.$$
In particular, evaluating at $(\tau_0,y_0)$ gives $\Theta(\tau_0,y_0)\le \phi(\tau_0,y_0)+\delta$ and so letting $\delta$ tend to zero we in fact have $\Theta(\tau_0,y_0)\le \phi(\tau_0,y_0)$.  Since $\tau_0,y_0$ are arbitrary, this means $\Theta$ is a candidate in the right hand side of \eqref{eq:perron} from we deduce that $\Phi = \Theta$, so $\Phi$ is in fact upper-semicontinuous.    Thus $\Phi$ is $F\star \mathcal P$-subharmonic on $\Omega\times \mathbb R^m$ and satisfies $\Phi\le \phi$ on $\partial \Omega\times \mathbb R^m$.

To prove the other inequality fix $\tau_0\in \partial \Omega$ and $y_0\in \mathbb R^m$.  Since $\Omega$ is assumed to be strictly $F$-convex, there is a $F$-subharmonic defining function $\rho$ and an $\epsilon>0$ such that for $C\gg 0$ the function $C(\rho - \epsilon |x-\tau_0|^2)$ is $F$-subharmonic.   Fix $\delta>0$ and let $u_0\in \mathbb R^m$ be such that $\phi(\tau_0,y) \ge u_0\cdot (y-y_0) + \phi_{\tau_0}(y_0)$ (i.e. $u_0$ is a subgradient of $\phi_{\tau_0}$ at $y_0$) and set $$g(y): = u_0\cdot (y-y_0) + \phi_{\tau_0}(y_0).$$

We claim that for $C\gg 0$ we have
\begin{equation} g(y) + C(\rho - \epsilon |\tau-\tau_0|^2) \le \phi_\tau(y) + \delta \text{ for } \tau\in \partial \Omega, y\in \mathbb R^m.\label{eq:finalclaim}\end{equation}
Given this for now set
$$ v(y): = g(y) + C(\rho - \epsilon |\tau-\tau_0|^2)- \delta$$
Then $v$ is $F\star \mathcal P$-subharmonic on $\Omega\times \mathbb R^m$ and $v_{\partial \Omega\times \mathbb R^m} \le \phi$.  So $v$ is a candidate for the envelope defining $\Phi$ and thus $\Phi\ge v$.  Evaluating this at $(\tau_0,y_0$ gives
$$\Phi(\tau_0,y_0) \ge v(\tau_0, y_0) = \phi(\tau_0,y_0)-\delta.$$
Letting $\delta\to 0$ this shows $\Phi|_{\partial \Omega\times \mathbb R^m} \ge \phi$.

For the claim, note first that by our choice of $u_0$ we have $\phi_{\tau_0}^*(u_0) + \phi_{\tau_0}(y_0) = y_0\cdot u_0$.   Now the hypothesis that $\{\phi_{\tau}^*\}_{\tau\in \partial \Omega}$ is locally comparable means that there is a $C'$  such that $\phi_{\tau}^*(u_0)\le \phi_{\tau_0}^*(u_0) +C'$ for all $\tau\in \partial \Omega$.    Thus from the definition of the Legendre transform this implies that for all $y$ we have $$y\cdot u_0 - \phi_{\tau}(y) \le  \phi_{\tau_0}^*(u_0) + C' = y_0\cdot u_0 - \phi_{\tau_0}(y_0)+C'$$ which rearranges to $\phi_{\tau}(y) \ge   g(y) - C'$.    This means that as long as $\tau$ is a bounded distance away from $\tau_0$ we can arrange so that \eqref{eq:finalclaim} holds for $C$ sufficiently large.

So it remains to consider $\tau$ close to $\tau_0$.   By definition $u_0$ is a subgradient of $\phi_{\tau_0}$ at $y_0$, and using Definition \ref{def:locallycomparable}(1) for each $\tau$ there is a $y_{\tau}\in \mathbb R^m$ so that $u_0$ is a subgradient of $\phi_\tau$ at $y_{\tau}$.  Then if \eqref{eq:finalclaim} holds at $(\tau,y_\tau)$ then it holds at $(\tau,y)$ for all $y$.   

We claim there is ball $B$ around $y_0$ such that $y_\tau\in B$ for all $\tau$ close to $\tau_0$.   But this is clear, for we know that $\phi^*_{\tau}(u_0)$ is finite, and hence by Definition \ref{def:locallycomparable}(2) this holds also for $u$ close to $u_0$.  Hence $\phi_{\tau_0}-g$ tends gets large when $\|y\|$ tends to infinity.  In particular there is a large ball $B$ around $y_0$ such that $\phi_{\tau_0}-g$ is strictly positive on $\partial B$.  Then by continuity of $(\tau,y)\mapsto \phi_{\tau}(y)$ we get that $\phi_{\tau}-g$ is strictly positive on $\partial B$ for $\tau$ sufficiently close to $\tau_0$.  But this implies that $u_0$ is a subgradient of $\phi_{\tau_0}$ for some $y_\tau\in B$.

Thus we need only show  \eqref{eq:finalclaim} holds for $\tau$ close to $\tau_0$ and $y$ in some given ball $B$ around $y_0$.  But it is clear that \eqref{eq:finalclaim} holds at $\tau_0$ (for all $y)$ and so by continuity this holds for $\tau$ close to $\tau_0$ and all $y$ in $B$ completing the proof.



\end{proof}

\subsection{Legendre Duality}
There is a useful dual description we can make for this interpolation.   For a function $\Phi(\tau,y)$ in two variables we write $\Phi^*$ for the Legendre transform in the second variable, so 
$$\Phi^*(x,u) = \sup_{y\in \mathbb R^m}\{ y\cdot u - \Phi(x,y)\}.$$

\begin{proposition}\label{prop:interpolationduality}Assume $\Omega$ is strictly $\overrightarrow{F}$-convex and strictly $\overrightarrow{\tilde{F}}$-convex. Then the envelope $\Phi$ from \eqref{eq:perron} satisfies
\begin{equation}-\Phi^*(x,u) = {\sup}\{ f\in USC(\overline{\Omega}) : f\in F(\Omega) \text{ and }   f(\tau) \le - {\phi^*_\tau}(u) \text{ for } \tau\in \partial \Omega \}\label{eq:dualequation}\end{equation}
\end{proposition}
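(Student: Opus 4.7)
The plan is to establish the claimed equality by proving the two inequalities separately; throughout, write $\Psi(x,u)$ for the supremum on the right-hand side of \eqref{eq:dualequation}.

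For $\Psi(x,u) \le -\Phi^*(x,u)$, fix $u\in\mathbb R^m$ and let $f$ be any candidate for $\Psi(x,u)$. The plan is to verify that $\zeta(x,y) := y\cdot u + f(x)$ is a competitor for the envelope defining $\Phi$ in \eqref{eq:perron}. Indeed $\zeta$ is upper-semicontinuous on $\overline{\Omega}\times\mathbb R^m$; the slice $y\mapsto \zeta(x_0,y)$ is affine, hence convex, and for any $y_0\in\mathbb R^m$ and any $\Gamma\in M_{m\times n}$ the slice $x\mapsto \zeta(x, y_0 + \Gamma x) = f(x) + (y_0 + \Gamma x)\cdot u$ differs from the $F$-subharmonic function $f$ by an affine function and hence remains $F$-subharmonic. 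Thus Proposition \ref{prop:slices} gives that $\zeta$ is $F\star\mathcal P$-subharmonic on $\Omega\times\mathbb R^m$. The boundary condition is the Fenchel--Young inequality: for $\tau\in\partial\Omega$ one has $\zeta(\tau,y) = y\cdot u + f(\tau) \le y\cdot u - \phi_\tau^*(u) \le \phi_\tau(y)$. Hence $\zeta \le \Phi$, giving $f(x) \le \Phi(x,y) - y\cdot u$ for all $y$; taking the infimum over $y$ and then the supremum over $f$ yields $\Psi \le -\Phi^*$.

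For the reverse inequality, I will show that $h(x):=-\Phi^*(x,u) = \inf_{y\in\mathbb R^m}\bigl(\Phi(x,y)-y\cdot u\bigr)$ is itself an admissible competitor for $\Psi(x,u)$. Subtracting the $y$-affine function $y\cdot u$ does not affect the Hessian of $\Phi$, so $\psi(x,y) := \Phi(x,y) - y\cdot u$ is $F\star \mathcal P$-subharmonic on $\Omega\times\mathbb R^m$, and the Minimum Principle (Theorem \ref{thm:minimumprinciple}) applied with $V = \mathbb R^m$ yields that $h$ is $F$-subharmonic on $\Omega$. Upper-semicontinuity of $h$ on $\overline\Omega$ follows from the upper-semicontinuity of $\Phi$ granted by Proposition \ref{prop:dirichletinterpolationbasics}: the set $\{x \in \overline{\Omega}: h(x)<c\}$ is the projection onto $\overline\Omega$ of the open set $\{(x,y)\in\overline\Omega\times\mathbb R^m: \psi(x,y)<c\}$ and hence open. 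Finally, the same proposition gives $\Phi|_{\partial\Omega\times\mathbb R^m}=\phi$, so for $\tau\in\partial\Omega$ we have $h(\tau) = \inf_y\bigl(\phi_\tau(y)-y\cdot u\bigr) = -\phi_\tau^*(u)$, matching the boundary condition in \eqref{eq:dualequation}. Hence $h$ is admissible and $-\Phi^*(x,u) \le \Psi(x,u)$.

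The main technical obstacle is the appeal to the Minimum Principle in the second step: Theorem \ref{thm:minimumprinciple} as stated requires $F$ to be a cone over $0$, which is not part of the ambient hypothesis for this proposition. Assuming $F$ is a cone the argument above is complete; in general one would likely reduce to this case by passing to the ray set $\overrightarrow F$, which is automatically a cone over $0$, and using the strict boundary convexity hypotheses already in force to identify the Perron envelope in \eqref{eq:dualequation} with its $\overrightarrow{F}$-version.
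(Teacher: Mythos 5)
Your proof is correct and takes essentially the same route as the paper: one inequality by checking that $\zeta(x,y)=y\cdot u+f(x)$ is a competitor for the envelope \eqref{eq:perron} (using Fenchel--Young on the boundary), and the reverse by applying the Minimum Principle to $\Phi(x,y)-y\cdot u$ and verifying that $-\Phi^*(\cdot,u)$ is itself an admissible candidate. The cone-over-$0$ caveat you flag is present in the paper's own proof as well, which invokes Theorem \ref{thm:minimumprinciple} without comment in a setting where $F$ is only assumed to be a convex Dirichlet set, so your treatment is if anything more careful.
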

\begin{proof}

This is essentially tautological given our minimum principle.  Let $S$ denote the right hand side of \eqref{eq:dualequation}.
  For fixed $u$ the function $\Phi(x,y) - y\cdot u$ is $F\star \mathcal P$-subharmonic, so by the minimum principle (Theorem \ref{thm:minimumprinciple}) $\alpha(x):=\inf_y \{ \Phi(x,y) - y\cdot u\}$ is $F$-subharmonic on $\Omega$. One checks also that $\alpha$ is upper-semicontinuous on $\overline{\Omega}$ and using that $\Phi|_{\partial \Omega}\times \mathbb R^m\le \phi$ (Theorem \ref{prop:dirichletinterpolationbasics}) it is easy to verify  that $\alpha(x) \le - \phi^*(x,u)$.  Thus $-\Phi^*(x,u) =- \sup_y \{ - \Phi(x,y) +y\cdot u\} = \inf_y \{ \Phi(x,y) - y\cdot u\} \le \alpha(x)\le S$ giving one inequality.
  
The other inequality is elementary.  Fix $u$, $f$ be as in the right hand side of \eqref{eq:dualequation} and set $\zeta(x,y) := f(x,y) + y\cdot u$.  Then $\zeta$ is $F\star \mathcal P$-subharmonic on $\Omega\times \mathbb R^m$ and upper-semicontinuous on $\overline{\Omega}\times \mathbb R^m$ and $f \le \-\phi^*_{\tau}(u)$ over $\partial \Omega$ implies $\zeta\le \phi$ on $\partial \Omega\times \mathbb R^m$. This $\Phi\ge \zeta= f(x,y) + y\cdot u$, so $-f(x,y) \ge -\Phi(x,y) + y\cdot u$.  Taking the supremum over all $y$ gives $-f(x)\ge \Phi^*(x,u)$ and then taking the infimum over all such $f$ yields $-S \ge \Phi^*$ completing the proof.
\end{proof}

So said another way, Proposition \ref{prop:interpolationduality} shows that the data for ${\Phi}^*$ is contained in a family (parametrized by $u$) of Dirichlet problems for $F$-subharmonicity on $\Omega$.  Moreover since taking the partial Legendre transform is an involution, we see that this family of Dirichlet problems contains enough information to recover $\Phi$.\\

Consider next above picture in the special case that $F=F_{sub}$, so $F_{sub}$-sub\-harmonicity the usual notion of subharmonicity.  Then the assumption on the $F$-subharmonicity of $\Omega$ is vacuous by the following statement.

\begin{lemma} Any open bounded $\Omega\subset \mathbb R^n$ with smooth boundary is strictly $F_{sub}$-convex and strictly $\widetilde{F_{sub}}$-convex.
\end{lemma}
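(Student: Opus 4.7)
The plan is to reduce both convexity conditions to the single, very weak requirement that at each boundary point one can find a symmetric matrix with strictly positive trace whose tangential restriction matches the Hessian of a defining function. First I would observe that $F_{sub}$ is a convex cone over the origin, so by the paragraph following the definition of the ray set, $\overrightarrow{F_{sub}} = F_{sub}$. Next I would compute the Dirichlet dual: since $\Int(F_{sub}) = \{A : \tr(A) > 0\}$, its set-theoretic complement is $\{A : \tr(A) \le 0\}$, and negating this gives back $\{A : \tr(A) \ge 0\} = F_{sub}$. Hence $\widetilde{F_{sub}} = F_{sub}$, and $\overrightarrow{\widetilde{F_{sub}}} = F_{sub}$ as well. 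Both conditions in the lemma therefore collapse to the single condition that $\Omega$ is strictly $F_{sub}$-convex.

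To verify this, fix $\tau \in \partial\Omega$ and choose any smooth local defining function $\rho$ for $\partial\Omega$ near $\tau$ (which exists by smoothness of the boundary). Let $\nu$ be the outward unit normal at $\tau$, so that $\nu \perp T := T_\tau\partial\Omega$. Define
\[
B := \Hess_\tau\rho + \lambda\, \nu\nu^T \in \Sym^2(\mathbb{R}^n),
\]
where $\lambda \in \mathbb{R}$ will be chosen. Since $\nu$ annihilates $T$, the rank-one perturbation $\lambda\nu\nu^T$ vanishes on $T \times T$, and hence $B|_T = \Hess_\tau\rho|_T$. On the other hand, $\tr(B) = \tr(\Hess_\tau\rho) + \lambda$, which is strictly positive for $\lambda$ sufficiently large. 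Thus $B \in \Int(F_{sub})$ and the definition of strict $\overrightarrow{F_{sub}}$-convexity is satisfied at $\tau$. Since $\tau$ was arbitrary, $\Omega$ is strictly $F_{sub}$-convex, and by the above identifications also strictly $\widetilde{F_{sub}}$-convex.

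I do not anticipate any genuine obstacle here: the only point that requires a little care is recognizing that the definition of $\overrightarrow{F}$-convexity allows the extension $B$ to differ arbitrarily from $\Hess_\tau\rho$ in the normal direction, which is precisely what gives enough freedom to force the trace condition regardless of the geometry of $\partial\Omega$. This reflects the fact that subharmonicity is a much weaker positivity notion than convexity, so no curvature hypothesis on $\partial\Omega$ is needed.
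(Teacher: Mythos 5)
Your proof is correct, and it takes a genuinely different (and more elementary) route than the paper. You directly verify the pointwise definition of $\overrightarrow{F}$-convexity: given any local defining function $\rho$ at $\tau$, you set $B := \Hess_\tau\rho + \lambda\,\nu\nu^T$, observe that $\nu\nu^T$ restricts to zero on $T = T_\tau\partial\Omega$, and push $\tr(B) = \tr(\Hess_\tau\rho) + \lambda$ positive by taking $\lambda$ large, together with the easy identifications $\overrightarrow{F_{sub}} = F_{sub}$ (cone over $0$) and $\widetilde{F_{sub}} = F_{sub}$. The paper instead constructs a \emph{global} strictly subharmonic defining function, namely $\max\{f(\lambda\delta),\,\epsilon_1|x|^2 - \epsilon\}$ where $\delta$ is the signed distance and $f$ is strictly convex; at a boundary point $\Hess\big(f(\lambda\delta)\big) = \lambda f'(0)\Hess\delta + \lambda^2 f''(0)\,\nu\nu^T$, so the underlying mechanism (dominant positive normal-direction second derivative) is the same as yours, but packaged globally. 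Your approach is shorter and verifies the definition literally; the paper's approach costs more but produces the global $\overrightarrow{\tilde F}$-subharmonic defining function on $\overline\Omega$, which is precisely the object invoked a few lines later in the proof of Proposition \ref{prop:dirichletinterpolationbasics} (via the Harvey--Lawson equivalence between boundary convexity and existence of a global defining function). So both arguments are valid; the paper's choice is made with the downstream application in mind.
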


\begin{proof}
Since $\widetilde{F_{sub}} = F_{sub}$ we need only prove the first statement.  Let $f$ be a smooth strictly convex function on $\mathbb R$ with $f(0)=0$ and $f(t)>0$ for $t>0$ and $f(t)<0$ for $t<0$.  Let $U$ be a tubular neighbourhood of $\partial \Omega$ and $\delta(x):U\to \mathbb R$ be the signed distance to $\partial \Omega$ chosen to be negative inside $\Omega$ and positive outside.    Then for $\lambda>0$ one can compute the Hessian of $\rho:=f(\lambda \delta(x))$ and see that this is strictly subharmonic near $\partial \Omega$.  Moreover $\rho$ is strictly negative just inside $\partial \Omega$,  so taking $\max\{ \rho, \epsilon_1 |x|^2-\epsilon\}$ for small $\epsilon_i$ we get a strictly subharmonic defining function for $\Omega$. 
\end{proof}

Turning back to the interpolation for each $\tau\in \Omega$ there exists the  \emph{harmonic measure} $d\omega_\tau$ on $\partial \Omega$ with the following property: if $\phi$ is a continuous function on $\Omega$ then the solution to the Dirichlet problem
$$ -\Delta f = 0 \text{ and } f= \phi \text{ on } \partial \Omega$$
is given by
$$f(\tau) = \int_{\partial \Omega} \phi(y) d\omega_\tau(y).$$
\begin{corollary}\label{cor:harmonicinterpolationconvexfunction}
With $\Phi$ defined as in \eqref{eq:perron}  and $F=F_{sub}$ and $\Omega$ open bounded with smooth boundary we have
$$ {\Phi^*}(x,u) = \int_{\partial \Omega} \phi^*_\tau(u) d\omega_x(\tau)$$
\end{corollary}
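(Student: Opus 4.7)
The plan is to apply Proposition~\ref{prop:interpolationduality} to convert the corollary into a classical Dirichlet problem on $\Omega$ and then invoke standard potential theory for regular domains.  First I would check the hypotheses: the lemma immediately preceding the corollary establishes that any bounded $\Omega$ with smooth boundary is strictly $F_{sub}$-convex, and since $\widetilde{F_{sub}} = F_{sub}$ it is also strictly $\widetilde{F_{sub}}$-convex, so Proposition~\ref{prop:interpolationduality} applies.  Fixing $u\in\mathbb R^m$ and writing $g(\tau):=-\phi^*_\tau(u)$, that proposition will give
$$-\Phi^*(x,u)=\sup\{f\in\mathrm{USC}(\overline{\Omega}):f\in F_{sub}(\Omega),\ f\le g\text{ on }\partial\Omega\},$$
which is exactly the Perron upper envelope of the classical Dirichlet problem on $\Omega$ with boundary data $g$.

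Next, since $\Omega$ is bounded with smooth boundary, every boundary point is regular for the Laplacian, and classical Perron--Wiener--Brelot theory identifies this upper envelope with the unique harmonic extension of $g$ whenever $g$ is continuous on $\partial\Omega$; that extension is by definition $\int_{\partial\Omega}g(\tau)\,d\omega_x(\tau)$.  So the substantive step reduces to verifying that the function $\tau\mapsto\phi^*_\tau(u)$ is continuous on $\partial\Omega$ for each fixed $u$.

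This continuity is where I expect the main difficulty.  My approach would be: local comparability (Definition~\ref{def:locallycomparable}(2)) forces the effective domain $D:=\{u:\phi^*_\tau(u)<\infty\}$ to be open and independent of $\tau$.  For $u\notin D$ both sides of the claimed identity are $+\infty$ (on the left one checks via the maximum principle that any subharmonic function with boundary values identically $-\infty$ is identically $-\infty$), so I may restrict to $u\in D$.  Joint continuity of $\phi$ on $\partial\Omega\times\mathbb R^m$ together with compactness of $\partial\Omega$ yields that $\phi_\tau\to\phi_{\tau_0}$ locally uniformly on $\mathbb R^m$ as $\tau\to\tau_0$.  I would then invoke the standard convex-analytic fact that locally uniform convergence of a family of convex functions implies locally uniform convergence of their Legendre transforms on the interior of the effective domain of the limit, which gives $\phi^*_\tau(u)\to\phi^*_{\tau_0}(u)$ for $u\in D$, hence continuity of $g$.

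Combining these ingredients yields
$$-\Phi^*(x,u)=\int_{\partial\Omega}g(\tau)\,d\omega_x(\tau)=-\int_{\partial\Omega}\phi^*_\tau(u)\,d\omega_x(\tau),$$
which is the desired formula.  Everything except the continuity of $\tau\mapsto\phi^*_\tau(u)$ is essentially formal; that step is standard convex analysis but deserves the most care, particularly in handling $u$ on the boundary of $D$, where the pointwise identity still holds by taking interior limits.
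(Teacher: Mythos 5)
Your proof follows the paper's approach exactly: apply Proposition~\ref{prop:interpolationduality} and identify the resulting envelope as the Perron solution of the classical Dirichlet problem, which for a smoothly bounded domain equals the integral against harmonic measure. The paper's own proof is two sentences long and omits the verification that $\tau\mapsto\phi^*_\tau(u)$ is continuous (and the handling of $u$ outside or on the boundary of the common effective domain); the care you take over these points is a welcome, correct filling-in of details the paper leaves implicit.
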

\begin{proof}
The right hand side of \eqref{eq:dualequation} is the solution of the Dirichlet problem for the Laplacian with boundary data $-\phi^*(x,u)$.  Thus Proposition \ref{prop:interpolationduality} gives the result we want.
\end{proof}

\subsection{Interpolation of Convex Sets}

We now apply this to interpolation of convex sets.  
Given a convex set $A\subset \mathbb R^m$ let 
$$\chi_A = \left\{ \begin{array}{cc} 0 & y\in A \\ \infty &y\notin A\end{array}\right.$$
denote its characteristic function, which is a convex function on $\mathbb R^m$ (in the extended sense).  Its Legendre transform is called the support function of $A$ and is given by

$$ h_{A}(y) := \sup_{z\in A_\tau}\{ y\cdot z\}$$


Suppose once again $\Omega\subset \mathbb R^n$ is assumed to be bounded with smooth boundary and $F\subset \Sym^2(\mathbb R^n)$ is assumed to be a Dirichlet set.  Suppose that for each $\tau\in \partial \Omega$ we are given a non-empty compact convex set $A_{\tau}\subset \mathbb R^m$ that varies continuously with respect to the Hausdorff metric.   For  $k\in \mathbb N$ consider the function
$$\phi_{\tau,k} : = \chi_{A_\tau,k}(x)= e^{dist(x,A_\tau)} -1$$
which one observes is continuous with respect to $\tau$ and for each fixed $\tau$ $\phi_{\tau,k}$ increases pointwise to $\chi_{A_{\tau}}$ as $k$ tends to infinity.

Now let $F\subset  \Sym^2(\mathbb R^n)$ be a convex Dirichlet set.  Then the corresponding envelope is
\begin{equation}\Phi_k:= \sup \{ \zeta \in USC(\overline{\Omega} \times \mathbb R^m): \zeta|_{\Omega\times \mathbb R^m} \text{ is } F\star\mathcal P\text{-subharmonic and }\zeta|_{\partial \Omega\times \mathbb R^m} \le \phi_{\tau,k} \}\label{eq:perron:repeated}
\end{equation}
We set
$$\Phi : = \sup_k \Phi_k:\Omega\times \mathbb R^m
\to \mathbb R\cup \{\infty\}$$

\begin{definition}\label{eq:setinterpolation}
The $F$-interpolation of the data $\{A_{\tau}\}$ is given by setting
$$A_{x} = \{ y\in \mathbb R^m: \Phi(x,y)\le 0\}$$
\end{definition}

\begin{theorem}
Each $A_x$ is a convex subset of $\mathbb R^m$ that agrees with the given boundary data.  
 \end{theorem}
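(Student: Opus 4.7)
The plan is to address the two parts of the statement separately: convexity follows from stability of convexity under slicing and suprema, while the boundary agreement is reduced to Proposition \ref{prop:dirichletinterpolationbasics} applied at each approximation level $\phi_{\tau,k}$ followed by a limit in $k$.

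For convexity of $A_x$, any candidate $\zeta$ in the envelope defining $\Phi_k$ is $F\star \mathcal P$-subharmonic on $\Omega \times \mathbb R^m$, so by Proposition \ref{prop:slices} the slice $y \mapsto \zeta(x,y)$ is convex for each $x \in \Omega$. Taking the supremum over candidates and then over $k$ preserves convexity in $y$, so $y \mapsto \Phi(x,y)$ is convex on $\mathbb R^m$. The sublevel set $A_x = \{y : \Phi(x,y) \le 0\}$ of a convex function is therefore convex.

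For the boundary values, I would invoke Proposition \ref{prop:dirichletinterpolationbasics} applied to each datum $\phi_{\tau,k}$ to conclude $\Phi_k|_{\partial \Omega \times \mathbb R^m} = \phi_{\tau,k}$, and then take $\sup_k$ to obtain $\Phi(\tau, y) = \sup_k \phi_{\tau,k}(y) = \chi_{A_\tau}(y)$ for $\tau \in \partial \Omega$. This immediately yields $A_\tau = \{y : \chi_{A_\tau}(y) \le 0\}$, which is exactly the given $A_\tau$. The application of Proposition \ref{prop:dirichletinterpolationbasics} requires (i) strict $\overrightarrow F$- and strict $\overrightarrow{\tilde F}$-convexity of $\Omega$, which I take as implicit in the interpolation setup, and (ii) local comparability of $\{\phi^*_{\tau,k}\}_{\tau \in \partial \Omega}$ for each fixed $k$.

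The principal obstacle is the verification of (ii). Because $\phi_{\tau, k}(y) = e^{d(y, A_\tau)} - 1$ grows superlinearly in $|y|$, its Legendre transform $\phi^*_{\tau, k}$ is finite on all of $\mathbb R^m$, so Definition \ref{def:locallycomparable}(2) is automatic. For Definition \ref{def:locallycomparable}(1), continuity of $\tau \mapsto A_\tau$ in Hausdorff distance yields joint continuity of $(\tau, y) \mapsto \phi_{\tau, k}(y)$; combined with the superlinear growth in $y$ this gives continuity of $\tau \mapsto \phi^*_{\tau, k}(u)$ for each fixed $u$, and compactness of $\partial \Omega$ then produces the uniform bound required. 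Once (ii) is in hand, the boundary equality follows from Proposition \ref{prop:dirichletinterpolationbasics} and passing to the supremum in $k$, and together with the convexity step above this completes the proof.
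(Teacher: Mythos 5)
Your proof is correct and follows essentially the same route as the paper: convexity of $A_x$ from convexity in $y$ of the envelope, and boundary agreement via Proposition~\ref{prop:dirichletinterpolationbasics} applied at each level $k$ followed by a supremum in $k$. You additionally make explicit and verify the hypotheses of that proposition (strict $\overrightarrow{F}$- and $\overrightarrow{\tilde F}$-convexity of $\Omega$, local comparability of $\{\phi^*_{\tau,k}\}$), which the paper's terse proof leaves implicit.
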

 \begin{proof}
 For fixed $x$ the function $y\mapsto \Phi_k(x,y)$ is convex, and hence the same is true of $y\mapsto \Phi(x,y)$ (in the extended sense) so $A_x$ is convex.  It agree with the boundary given boundary data since $\Phi_k|_{\partial \Omega\times \mathbb R^m} = \phi_{\tau,k}$ and so $\Phi|_{\partial \Omega\times \mathbb R^m} = \chi_{A_\tau}$.  
 \end{proof}

When $F=\mathcal P$ the hypothesis on $\Omega$ is that is be strictly convex.  Then the above interpolation agrees with the naive interpolation obtained by taking the convex hull of $\cup_{\tau\in \partial \Omega} \{\tau\} \times A_{\tau}$.  When $F=F_{sub}$ we get the harmonic interpolation from the introduction as we will describe next.

\subsection{Harmonic Interpolation}

We specialize to the case $F=F_{sub}$.  With $\Omega$ and $\{A_\tau\}$ as in the previous section, consider the integral
$$S: = \int_{\partial \Omega} A_{\tau} d\omega$$
which will again be a convex subset of $\mathbb R^m$.  This can be thought of in a number of ways, for instance since we will always assume $A_{\tau}$ varies continuously (with respect to the Hausdorff metric say) we may as well consider this as a limit of Riemann sums weighted with respect to the measure $d\omega$ with the Riemann sum being understood as taken with respect to  Minkowski addition (see \cite{Aubin_setvaluedanalyais} for some background into the integral of set-valued functions)   In any case since the function $A\mapsto h_A$ is linear (with respect to Minkowski addition) we have
$$h_{S} = \int_{\partial \Omega} h_{A_{\tau}} d\omega$$

\begin{proposition}\label{prop:harmonicasenvelope}
Let $F=F_{sub}$ and $\Omega\subset \mathbb R^m$.  Then the $F$-interpolation of $\{A_\tau\}$ agrees with the harmonic interpolation defined in the introduction.  That is
$$A_x = \int_{\partial \Omega} A_{\tau} d\omega_{x}(\tau)$$
In particular for the harmonic interpolation, the function $x\mapsto -\log \vol(A_x)$ is subharmonic.
\end{proposition}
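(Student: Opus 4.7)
The plan is to identify the envelope $\Phi$ from \eqref{eq:perron:repeated} with the characteristic function $\chi_{A_x^{HI}}$ of the harmonic interpolation $A_x^{HI} := \int_{\partial \Omega} A_\tau\, d\omega_x(\tau)$, via the partial Legendre transform and Corollary \ref{cor:harmonicinterpolationconvexfunction}. Once this identification is in hand, the subharmonicity of $x \mapsto -\log \vol(A_x^{HI})$ is a direct application of Prekopa's Theorem to the approximating envelopes $\Phi_k$.

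First I would apply Corollary \ref{cor:harmonicinterpolationconvexfunction} (the case $F = F_{sub}$) to each $\Phi_k$ to obtain $\Phi_k^*(x, u) = \int_{\partial \Omega} \phi_{\tau, k}^*(u)\, d\omega_x(\tau)$. Since $\phi_{\tau, k} \nearrow \chi_{A_\tau}$ pointwise in $y$, Legendre duality reverses this to $\phi_{\tau, k}^* \searrow \chi_{A_\tau}^* = h_{A_\tau}$, and monotone convergence under the integral yields
\begin{equation*}
\Phi_k^*(x, u) \searrow \int_{\partial \Omega} h_{A_\tau}(u)\, d\omega_x(\tau) = h_{A_x^{HI}}(u),
\end{equation*}
the last equality being the linearity of $A \mapsto h_A$ under Minkowski addition (which is exactly how the set integral is defined, as a Riemann-sum limit). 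A slice-by-slice maximum principle (Proposition \ref{prop:slices} applied with the constant affine map $\Gamma \equiv y$) bounds each $\Phi_k(x, y)$ above by $\sup_\tau \phi_{\tau, k}(y)$, so $\Phi_k(x, \cdot)$ is a finite-valued convex function on $\mathbb R^m$, hence continuous, hence equal to its own partial biconjugate $\Phi_k^{**}(x, \cdot)$. Taking the partial biconjugate and passing to the monotone limit in $k$,
\begin{equation*}
\Phi_k(x, y) = \sup_u \bigl(y \cdot u - \Phi_k^*(x, u)\bigr) \;\nearrow\; \sup_u \bigl(y \cdot u - h_{A_x^{HI}}(u)\bigr) = \chi_{A_x^{HI}}(y),
\end{equation*}
the last equality being the Legendre duality between support and characteristic functions of the compact convex set $A_x^{HI}$. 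Therefore $\Phi = \sup_k \Phi_k = \chi_{A_x^{HI}}$ pointwise, and $A_x = \{y : \Phi(x, y) \le 0\} = A_x^{HI}$.

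For the subharmonicity assertion, I would choose $R$ so that $A_\tau \subset B_R$ for all $\tau \in \partial \Omega$ (hence $A_x^{HI} \subset B_R$ for all $x \in \Omega$) and apply Prekopa's Theorem (Theorem \ref{thm:prekopageneral}) to each $F_{sub}\star \mathcal P$-subharmonic $\Phi_k$ on $\Omega \times B_R$: the marginal $x \mapsto -\log \int_{B_R} e^{-\Phi_k(x, y)}\, dy$ is subharmonic on $\Omega$. Dominated convergence (using $e^{-\Phi_k} \le 1$ after a harmless shift) gives $\int_{B_R} e^{-\Phi_k(x, y)}\, dy \searrow \vol(A_x^{HI})$, so the monotone-increasing limit of the subharmonic marginals is $-\log \vol(A_x^{HI})$, which is subharmonic by Proposition \ref{prop:basicproperties}(4) (its upper-semicontinuity being supplied by Lemma \ref{lem:semicontinuousi}).

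The technical heart of the argument is the two monotone limit interchanges in the second paragraph—passing $k \to \infty$ through the integral against $d\omega_x$, and through the outer supremum over $u$. Both are essentially trivial once the correct monotonicities of $\phi_{\tau, k}^*$ and $\Phi_k^*$ are in place, but they are the only real bookkeeping required. The key conceptual point is that the whole proof is an application of partial Legendre duality plus the already-established Corollary \ref{cor:harmonicinterpolationconvexfunction}, which identifies the Legendre-transformed Perron envelope with the Poisson integral of the Legendre-transformed boundary data.
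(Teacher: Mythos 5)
Your proposal is correct and follows essentially the same route as the paper: apply Corollary \ref{cor:harmonicinterpolationconvexfunction} to the approximating envelopes $\Phi_k$, then pass the monotone limits in $k$ through the harmonic-measure integral and the partial Legendre transform to identify $\Phi(x,\cdot)$ with the indicator function of $\int_{\partial\Omega}A_\tau\,d\omega_x(\tau)$. The only minor divergence is the final subharmonicity step, where the paper cites its Brunn--Minkowski theorem together with continuity of the harmonic measure in $x$, while you rerun the approximants through Prekopa's Theorem \ref{thm:prekopageneral} and take an increasing limit --- both are immediate applications of the same machinery.
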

\begin{proof}
Set $B_x =  \int_{\partial \Omega} A_{\tau} d\omega_{x}(\tau)$.  Applying Corollary \ref{cor:harmonicinterpolationconvexfunction} for for each $k$ gives
$$\Phi_k^* = \int_{\partial \Omega} \phi_{\tau,k}^*(u) d\omega_x(\tau)$$
As $k$ increases the function $\phi_{\tau,k}$ increase to $\chi_{A_\tau}$ pointwise, and so $\phi_{\tau,k}^*$ decrease pointwise to $h_{A_\tau}$.  Furthermore $\Phi_k$ increases to $\Phi$ pointwise and so $\Phi_{k}^*$ decreases to $\Phi^*$.  Thus taking the limit as $k$ tends to infinity and using the monotone convergence theorem
$$\Phi^*(x,u) = \int_{\partial \Omega} h_{A_{\tau}} (u) d\omega_x(\tau) = h_{B_x}$$
 So taking the Legendre transform
$$\Phi(x,y) = \chi_{B_x}(y)$$
Thus $A_x = \{ \chi_{B_x}\le 0\}$ which implies $A_x=B_x$ as claimed.  Finally using that the harmonic measure is continuous with respect to $x$ shows that $x\mapsto -\log \vol(A_x)$ is continuous, and hence it is $F$-subharmonic by our Brunn-Minkowskii Theorem.
\end{proof}

\appendix

\section{Sup-convolution}\label{appendix:supconvolution}

The following sup-convolution construction and its properties are essentially standard.  We will need them for possibly unbounded functions that have prescribed behavior at infinity. 

\begin{proposition}\label{prop:supconvolution}
Let $X\subset \mathbb R^n$ be an open ball and $F$ a Dirichlet set that is a cone over $0$.  Let  $\psi$  be an $F\star \mathcal P$-subharmonic function defined in a neighbourhood of $\overline{X}\times \mathbb R$ that is bounded from below suppose and such that there are constants $R,A$ and a bounded $F$-subharmonic function $v$ on $X$ with
$$ \psi(x,y) = A \|y\|^2 + v(x) \text{ for } \|y\|>R.$$
For $\epsilon>0$ define
$$\psi_\epsilon(x,y) = \sup_{(x',y')\in X\times \mathbb R} \{ \psi(x',y) - \frac{1}{2\epsilon} \|x-x'\|^2 - \frac{1}{2\epsilon} \|y-y'\|^2\} \text{ for } (x,y) \in \frac{1}{2}X \times \mathbb R.$$
Then for $\epsilon$ sufficiently small,
\begin{enumerate}
\item $\psi_\epsilon$ is continuous. 
\item $\psi_{\epsilon}(x,y) = A_\epsilon \|y\|^2 +{v}_\epsilon(x)  \text{ for } \|y\|\ge R+1$
for some positive constant $A_\epsilon$ and some $F$-subharmonic function $v_{\epsilon}$
\item $\psi_\epsilon\searrow \psi$ pointwise as $\epsilon\to 0$.
\item $\psi_\epsilon$ is $F\star \mathcal P$-subharmonic. 
\end{enumerate}
\end{proposition}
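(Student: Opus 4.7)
For (1), the standard reformulation
\[
\psi_\epsilon(x,y) + \tfrac{1}{2\epsilon}\bigl(\|x\|^2 + \|y\|^2\bigr) = \sup_{(x',y')}\Bigl\{\psi(x',y') - \tfrac{1}{2\epsilon}(\|x'\|^2 + \|y'\|^2) + \tfrac{1}{\epsilon}(x\cdot x' + y\cdot y')\Bigr\}
\]
displays the left-hand side as a supremum of affine functions of $(x,y)$, hence convex. The hypothesis $\epsilon<1/(2A)$ makes the penalty strictly dominate the $A\|y'\|^2$ growth of $\psi$, so $\psi_\epsilon$ is finite everywhere, and a finite-valued convex function is continuous.

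For (2) and (3): for $\|y\|\ge R+1$ and small $\epsilon$, splitting the defining sup according to whether $\|y'\|\le R$ or $\|y'\|>R$ shows, using that $\psi$ is bounded on compacta and that the penalty $\tfrac{1}{2\epsilon}(\|y\|-R)^2$ blows up, that the sup is realized in the region where the explicit formula $\psi(x',y')=A\|y'\|^2+v(x')$ holds. There the problem separates: the sup over $x'$ yields the sup-convolution $v_\epsilon(x)$, and the one-dimensional sup over $y'$ is attained at $y^*=y/(1-2A\epsilon)$ with value $\tfrac{A}{1-2A\epsilon}\|y\|^2$, giving the claimed formula with $A_\epsilon=A/(1-2A\epsilon)$; the $F$-subharmonicity of $v_\epsilon$ follows by applying the argument for (4) to $v$ in place of $\psi$. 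For (3), $\psi_\epsilon\ge\psi$ is immediate from taking $(x',y')=(x,y)$; $\psi_\epsilon$ is monotone in $\epsilon$ since smaller $\epsilon$ gives a larger penalty; and the decreasing limit equals $\psi$ by the standard argument, namely that near-maximizers $(x_\epsilon,y_\epsilon)$ remain bounded thanks to the growth condition on $\psi$ and $\epsilon<1/(2A)$, are then forced to converge to $(x,y)$ by the penalty, after which upper-semicontinuity of $\psi$ yields $\limsup_{\epsilon\to 0}\psi_\epsilon(x,y)\le\psi(x,y)$.

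The main point is (4), which we obtain via the change of variables $(h,k):=(x-x', y-y')$:
\[
\psi_\epsilon(x,y) = \sup_{(h,k)}\phi_{h,k}(x,y), \qquad \phi_{h,k}(x,y) := \psi(x-h,y-k) - \tfrac{1}{2\epsilon}\bigl(\|h\|^2+\|k\|^2\bigr).
\]
For each fixed $(h,k)$ the function $\phi_{h,k}$ is a translate of $\psi$ minus a constant; since the condition $\Hess\in F\star\mathcal{P}$ is constant-coefficient (independent of the base point), $\phi_{h,k}$ is $F\star\mathcal{P}$-subharmonic on its domain. The family $\{\phi_{h,k}\}$ is locally bounded above by the continuous function $\psi_\epsilon$ from (1), so Proposition \ref{prop:basicproperties}(4) implies that the upper-semicontinuous regularization of the supremum is $F\star\mathcal{P}$-subharmonic. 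Since $\psi_\epsilon$ is already continuous it coincides with this regularization, giving (4).

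The principal technical obstacle is that each $\phi_{h,k}$ lives on a different translated copy of the domain of $\psi$, so the sup-over-a-family principle (which requires functions on a common open set) must be applied locally. For any $(x_0,y_0)\in\tfrac{1}{2}X\times\mathbb{R}$, the quadratic penalty confines the $(h,k)$'s contributing meaningfully to the sup to a bounded set, uniformly for $(x,y)$ in a small neighborhood $U$ of $(x_0,y_0)$ (as in the argument for (2)); shrinking $U$ further ensures each such $\phi_{h,k}$ is defined throughout $U$. The supremum principle then applies on $U$, and since $F\star\mathcal{P}$-subharmonicity is local we conclude.
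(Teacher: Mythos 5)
Your proof is correct and follows essentially the same approach as the paper's: same convexity-of-shifted-function argument for continuity, same splitting of the sup at $\|y'\|\approx R$ for the explicit tail formula, same localization of the near-maximizers for the decreasing-limit claim, and the same invocation of translation-invariance plus the locally-bounded-family property (Proposition \ref{prop:basicproperties}(4)) for $F\star\mathcal P$-subharmonicity. Your explicit formula $A_\epsilon = A/(1-2A\epsilon)$ and the remark about $\psi_\epsilon$ already being continuous (so it equals its usc regularization) are nice touches that the paper leaves implicit.
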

\begin{proof}
If $(x,y) \in \frac{1}{2}X \times \mathbb R$ then $\psi_\epsilon(x,y)  + \frac{1}{2\epsilon} \|x\|^2 + \frac{1}{2\epsilon} \|y\|^2$ equals
\begin{align}
&= \sup_{(x',y')\in X\times \mathbb R} \{ \psi(x',y) - \frac{1}{2\epsilon}\left( \|x-x'\|^2 -\|x\|^2\right)- \frac{1}{2}\left( |y-y'|^2 -\|y\|^2\right)\}\\
&=\sup_{(x',y')\in X\times \mathbb R} \{ \psi(x',y) + \frac{1}{\epsilon} x\cdot x'  +  \frac{1}{\epsilon} y\cdot y'  - \frac{1}{2\epsilon} \|x'\|^2 - \frac{1}{2\epsilon} \|y'\|^2\}
\end{align}
and this is a supremum of linear functions in $(x,y)$ and thus is convex.  Hence $\psi_\epsilon(x,y)  + \frac{1}{2\epsilon} \|x\|^2 + \frac{1}{2\epsilon} \|y\|^2$ is certainly continuous, and thus so is $\psi_\epsilon$.\\

For the remaining properties set
 $$u(x',y') = \psi(x',y') - \frac{1}{2\epsilon} \|x-x'\|^2 - \frac{1}{2\epsilon} \|y-y'\|^2$$
so 
$$\psi_\epsilon = \sup_{(x',y')\in X\times \mathbb R} u(x',y').$$
As $\psi$ is bounded from below and uppersemicontinuous  we can fix an $M$ so that $|\psi(x,y)|<M$ for $\|y\|\le R+2$ and $x\in X$.   Observe also that certainly $\psi_{\epsilon} \ge \psi$.\\

To prove (2) fix $\|y\|\ge R+1$.   Then any $\|y'\|\le R$  has $\|y-y'\|\ge 1$ and so for  $\epsilon$ sufficiently small $$u(x',y') \le M - \frac{1}{2\epsilon} \le AR^2 + v(x) \le A \|y\|^2 + v(x) = \psi(x,y)\le \psi_\epsilon(x,y).$$
 On the other hand for $\|y'\|\ge R$ we have $$u(x',y') = A\|y'\|^2 + v(x) - \frac{1}{2\epsilon} \|x-x'\|^2 - \frac{1}{2\epsilon} \|y-y'\|^2.$$  For $\epsilon$ sufficiently small this is concave in $y'$ and so has a unique maximum that occurs at some point (that depends on $y$).   Thus the maximum of $u(x',y')$ can be calculated by elementary means and results in
 \begin{equation}\psi_\epsilon(x,y) = A_{\epsilon} \|y\|^2 + \tilde{v}(x) \text{ for } \|y\|\ge R+1 \label{eq:psiepsilonlagey}\end{equation}
where $A_\epsilon$ is a positive constant for $\epsilon$ sufficiently small, and $\tilde{v}(x) = \sup_{x'\in X} v(x) - \frac{1}{2\epsilon}\|x-x'\|^2$ which is $F$-subharmonic.   This proves (2).  \\

{\bf Claim: } Set $\delta:= \sqrt{4M\epsilon}$.  Then for $\|y\|\le R+1$ and $\epsilon$ sufficiently small 
\begin{equation}\psi_\epsilon = \sup_{\|x-x'\|< \delta, \|y-y'\|\le \delta}  u(x',y') \label{eq:supconvolutionislocal}\end{equation}

To show this suppose $\|x-x'\|\ge \delta$ or $\|y-y'\|\ge \delta$.  If $\|y'\|\le R+2$ then $u(x',y') \le M - \frac{\delta^2}{2\epsilon} \le -M \le \psi(x,y)\le \psi_\epsilon(x,y)$.  And if $\|y'\|\ge R+2$ then $\|y-y'\|\ge 1$ and so $u(x',y')\le M - \frac{1}{2\epsilon} \le  -M\le \psi(x,y)\le \psi_\epsilon(x,y)$ for $\epsilon\ll 1$.   This proves the claim.\\

Now \eqref{eq:psiepsilonlagey} certainly implies $\psi_\epsilon \searrow \psi$ as $\epsilon\to 0$ for $\|y\|\ge R+1$.  And \eqref{eq:supconvolutionislocal} and the fact that $\psi$ is upper-semicontinuous and $\delta\to 0$ as $\epsilon\to 0$ we conclude that $\psi_{\epsilon} \searrow \psi$ on $\|y\|\le R+1$ as well.  This proves (3).\\

Finally (4) follows easily since for $\|y\|> R$ then $\psi$ is a convex function of $y$ so it is $F\star \mathcal P$-subharmonic there.   On the other hand for $\|y\|\le R+1$ we have $$\psi_\epsilon = \sup_{\|\tau|< \delta, \|\sigma'\|\le \delta}  \psi(x+\tau, y+\sigma) - \frac{1}{2\epsilon} \|\tau\|^2 - \frac{1}{2\epsilon} \|s\|^2$$
Since $F$ is constant coefficient (and independent of the constant part) the function $(x,y)\mapsto  \psi(x+\tau, y+\sigma) - \frac{1}{2\epsilon} \|\tau\|^2 - \frac{1}{2\epsilon} |s|^2$ if $F\star \mathcal P$-subharmonic.  Hence this supremum is as wel, and thus $\psi_\epsilon$ is $F\star \mathcal P$-subharmonic for $\|y\|< R+1$ as well.

\end{proof}

\let\oldaddcontentsline\addcontentsline
\renewcommand{\addcontentsline}[3]{}
\bibliographystyle{plain}
\bibliography{prekopa}{}

\begin{thebibliography}{10}

\bibitem{Aubin_setvaluedanalyais}
Jean-Pierre Aubin and H\'{e}l\`ene Frankowska.
\newblock {\em Set-valued analysis}.
\newblock Modern Birkh\"{a}user Classics. Birkh\"{a}user Boston, Inc., Boston,
  MA, 2009.
\newblock Reprint of the 1990 edition [MR1048347].

\bibitem{BallBartheNaor}
Keith Ball, Franck Barthe, and Assaf Naor.
\newblock Entropy jumps in the presence of a spectral gap.
\newblock {\em Duke Math. J.}, 119(1):41--63, 2003.

\bibitem{Berndtsson_Prekopa}
Bo~Berndtsson.
\newblock Prekopa's theorem and {K}iselman's minimum principle for
  plurisubharmonic functions.
\newblock {\em Math. Ann.}, 312(4):785--792, 1998.

\bibitem{Berndtsson_convexityKahler}
Bo~Berndtsson.
\newblock Convexity on the space of {K}\"{a}hler metrics.
\newblock {\em Ann. Fac. Sci. Toulouse Math. (6)}, 22(4):713--746, 2013.

\bibitem{Berndtsson_openness}
Bo~Berndtsson.
\newblock The openness conjecture and complex {B}runn-{M}inkowski inequalities.
\newblock In {\em Complex geometry and dynamics}, volume~10 of {\em Abel
  Symp.}, pages 29--44. Springer, Cham, 2015.

\bibitem{Berndtsson_realandcomplexBM}
Bo~Berndtsson.
\newblock Real and complex {B}runn-{M}inkowski theory.
\newblock In {\em Analysis and geometry in several complex variables}, volume
  681 of {\em Contemp. Math.}, pages 1--27. Amer. Math. Soc., Providence, RI,
  2017.

\bibitem{Berndtsson_complexBM_and_geometry}
Bo~Berndtsson.
\newblock Complex {B}runn-{M}inkowski inequalities and their applications in
  geometry.
\newblock In {\em European {C}ongress of {M}athematics}, pages 443--457. Eur.
  Math. Soc., Z\"{u}rich, 2018.

\bibitem{Berndtsson_survey}
Bo~Berndtsson.
\newblock Complex {B}runn-{M}inkowski theory and positivity of vector bundles.
\newblock In {\em Proceedings of the {I}nternational {C}ongress of
  {M}athematicians---{R}io de {J}aneiro 2018. {V}ol. {II}. {I}nvited lectures},
  pages 859--884. World Sci. Publ., Hackensack, NJ, 2018.

\bibitem{BoInterpolation}
Bo~Berndtsson, Dario Cordero-Erausquin, Bo'az Klartag, and Yanir~A. Rubinstein.
\newblock Complex interpolation of {$\Bbb R$}-norms, duality and foliations.
\newblock {\em J. Eur. Math. Soc. (JEMS)}, 22(2):477--505, 2020.

\bibitem{BrascampLieb}
Herm~Jan Brascamp and Elliott~H. Lieb.
\newblock On extensions of the {B}runn-{M}inkowski and {P}r\'{e}kopa-{L}eindler
  theorems, including inequalities for log concave functions, and with an
  application to the diffusion equation.
\newblock {\em J. Functional Analysis}, 22(4):366--389, 1976.

\bibitem{Coifmanetal}
R.~R. Coifman, R.~Rochberg, G.~Weiss, M.~Cwikel, and Y.~Sagher.
\newblock The complex method for interpolation of operators acting on families
  of {B}anach spaces.
\newblock In {\em Euclidean harmonic analysis ({P}roc. {S}em., {U}niv.
  {M}aryland, {C}ollege {P}ark, {M}d., 1979)}, volume 779 of {\em Lecture Notes
  in Math.}, pages 123--153. Springer, Berlin, 1980.

\bibitem{SemmesCoifmanInterpolation}
R.~R. Coifman and S.~Semmes.
\newblock Interpolation of {B}anach spaces, {P}erron processes, and
  {Y}ang-{M}ills.
\newblock {\em Amer. J. Math.}, 115(2):243--278, 1993.

\bibitem{Cordero-Erausquin}
Dario Cordero-Erausquin.
\newblock On {B}erndtsson's generalization of {P}r\'{e}kopa's theorem.
\newblock {\em Math. Z.}, 249(2):401--410, 2005.

\bibitem{darvas-Rubinstein}
Tam\'{a}s Darvas and Yanir~A. Rubinstein.
\newblock A minimum principle for {L}agrangian graphs.
\newblock {\em Comm. Anal. Geom.}, 27(4):857--876, 2019.

\bibitem{Davis}
Chandler Davis.
\newblock All convex invariant functions of hermitian matrices.
\newblock {\em Arch. Math.}, 8:276--278, 1957.

\bibitem{Gardner}
R.~J. Gardner.
\newblock The {B}runn-{M}inkowski inequality.
\newblock {\em Bull. Amer. Math. Soc. (N.S.)}, 39(3):355--405, 2002.

\bibitem{HL_Potentialalmostcomplex}
F.~Reese Harvey and H.~Blaine Lawson.
\newblock Potential theory on almost complex manifolds.
\newblock {\em Ann. Inst. Fourier (Grenoble)}, 65(1):171--210, 2015.

\bibitem{HL_Dirichletduality}
F.~Reese Harvey and H.~Blaine Lawson, Jr.
\newblock Dirichlet duality and the nonlinear {D}irichlet problem.
\newblock {\em Comm. Pure Appl. Math.}, 62(3):396--443, 2009.

\bibitem{HL_Dirichletdualitymanifolds}
F.~Reese Harvey and H.~Blaine Lawson, Jr.
\newblock Dirichlet duality and the nonlinear {D}irichlet problem on
  {R}iemannian manifolds.
\newblock {\em J. Differential Geom.}, 88(3):395--482, 2011.

\bibitem{HL_Geometricplurisubharmonicity}
F.~Reese Harvey and H.~Blaine Lawson, Jr.
\newblock Geometric plurisubharmonicity and convexity: an introduction.
\newblock {\em Adv. Math.}, 230(4-6):2428--2456, 2012.

\bibitem{HL_equivalenceviscosity}
F.~Reese Harvey and H.~Blaine Lawson, Jr.
\newblock The equivalence of viscosity and distributional subsolutions for
  convex subequations---a strong {B}ellman principle.
\newblock {\em Bull. Braz. Math. Soc. (N.S.)}, 44(4):621--652, 2013.

\bibitem{HL_pconvexity}
F.~Reese Harvey and H.~Blaine Lawson, Jr.
\newblock {$p$}-convexity, {$p$}-plurisubharmonicity and the {L}evi problem.
\newblock {\em Indiana Univ. Math. J.}, 62(1):149--169, 2013.

\bibitem{HL_Dirichletprescribed}
F.~Reese Harvey and H.~Blaine Lawson, Jr.
\newblock The {D}irichlet problem with prescribed interior singularities.
\newblock {\em Adv. Math.}, 303:1319--1357, 2016.

\bibitem{KiselmanInvent}
Christer~O. Kiselman.
\newblock The partial {L}egendre transformation for plurisubharmonic functions.
\newblock {\em Invent. Math.}, 49(2):137--148, 1978.

\bibitem{Krylov}
N.~V. Krylov.
\newblock On the general notion of fully nonlinear second-order elliptic
  equations.
\newblock {\em Trans. Amer. Math. Soc.}, 347(3):857--895, 1995.

\bibitem{interpolationbook}
Alessandra Lunardi.
\newblock {\em Interpolation theory}.
\newblock Appunti. Scuola Normale Superiore di Pisa (Nuova Serie). [Lecture
  Notes. Scuola Normale Superiore di Pisa (New Series)]. Edizioni della
  Normale, Pisa, second edition, 2009.

\bibitem{Nguyen}
Van~Hoang Nguyen.
\newblock A local proof of the dimensional {P}r\'{e}kopa's theorem.
\newblock {\em J. Math. Anal. Appl.}, 419(1):20--27, 2014.

\bibitem{Paun_survey}
Mihai P\u{a}un.
\newblock Positivit\'{e} des images directes et applications [d'apr\`es {B}o
  {B}erndtsson].
\newblock Number 407, pages Exp. No. 1122, 53--90. 2019.
\newblock S\'{e}minaire Bourbaki. Vol. 2016/2017. Expos\'{e}s 1120--1135.

\bibitem{Rochberg_interpolation}
Richard Rochberg.
\newblock Interpolation of {B}anach spaces and negatively curved vector
  bundles.
\newblock {\em Pacific J. Math.}, 110(2):355--376, 1984.

\bibitem{Rochberg_Semmes}
Richard Rochberg.
\newblock The work of {C}oifman and {S}emmes on complex interpolation, several
  complex variables, and {PDE}s.
\newblock In {\em Function spaces and applications ({L}und, 1986)}, volume 1302
  of {\em Lecture Notes in Math.}, pages 74--90. Springer, Berlin, 1988.

\bibitem{ross_nystron_unpublished}
Julius Ross and David~Witt Nystr\"{o}m.
\newblock {P}rekopa's {T}heorem for the {H}armonic {I}nterpolation.
\newblock To appear.

\bibitem{ross_nystrom_minimum}
Julius Ross and David~Witt Nystr\"{o}m.
\newblock The minimum principle for convex subequations.
\newblock {\em J. Geom. Anal.}, 32(1):Paper No. 28, 58, 2022.

\bibitem{SemmesInterpolation}
Stephen Semmes.
\newblock Interpolation of {B}anach spaces, differential geometry and
  differential equations.
\newblock {\em Rev. Mat. Iberoamericana}, 4(1):155--176, 1988.

\bibitem{Slodkowski_harmonic_I}
Zbigniew Slodkowski.
\newblock Complex interpolation of normed and quasinormed spaces in several
  dimensions. {I}.
\newblock {\em Trans. Amer. Math. Soc.}, 308(2):685--711, 1988.

\bibitem{SlodkowskiI}
Zbigniew Slodkowski.
\newblock Pseudoconvex classes of functions. {I}. {P}seudoconcave and
  pseudoconvex sets.
\newblock {\em Pacific J. Math.}, 134(2):343--376, 1988.

\bibitem{SlodkowskiIII}
Zbigniew Slodkowski.
\newblock Pseudoconvex classes of functions. {III}. {C}haracterization of dual
  pseudoconvex classes on complex homogeneous spaces.
\newblock {\em Trans. Amer. Math. Soc.}, 309(1):165--189, 1988.

\bibitem{Slodkowski_harmonic_III}
Zbigniew Slodkowski.
\newblock Complex interpolation for normed and quasi-normed spaces in several
  dimensions. {III}. {R}egularity results for harmonic interpolation.
\newblock {\em Trans. Amer. Math. Soc.}, 321(1):305--332, 1990.

\bibitem{Slodkowski_harmonic_II}
Zbigniew Slodkowski.
\newblock Complex interpolation of normed and quasinormed spaces in several
  dimensions. {II}. {P}roperties of harmonic interpolation.
\newblock {\em Trans. Amer. Math. Soc.}, 317(1):255--285, 1990.

\bibitem{SlodkowskiII}
Zbigniew Slodkowski.
\newblock Pseudoconvex classes of functions. {II}. {A}ffine pseudoconvex
  classes on {${\bf R}^N$}.
\newblock {\em Pacific J. Math.}, 141(1):125--163, 1990.

\end{thebibliography}
\let\addcontentsline\oldaddcontentsline

\medskip
\small{
\noindent {\sc Julius Ross,  Mathematics Statistics and Computer Science, University of Illinois at Chicago, Chicago  IL, USA\\ juliusro@uic.edu}\medskip

\noindent{\sc David Witt Nystr\"om, 
Department of Mathematical Sciences, Chalmers University of Technology and the University of Gothenburg, Sweden \\ wittnyst@chalmers.se, danspolitik@gmail.com}

}

\end{document}